\documentclass[reqno]{amsart}
\usepackage{amsmath,amsthm,amssymb}
\usepackage{latexsym}
\usepackage{eucal}
\usepackage{fullpage}
\usepackage{amsmath}
\numberwithin{equation}{section}



\newtheorem{theorem}{Theorem}[section]
\newtheorem{lemma}{Lemma}[section]

\def\cal{\mathcal}
\let\Re=\undefined
\DeclareMathOperator{\Re}{Re}
\let\Im=\undefined
\DeclareMathOperator{\Im}{Im}

\def\ge{\geqslant}\def\le{\leqslant}
\def\e{\varepsilon}\def\~{\widetilde}

\begin{document}
\title[ On a problem by Steklov ]{ On a problem by Steklov }
\author{A. Aptekarev, S. Denisov, D. Tulyakov }
\address{
\begin{flushleft}
University of Wisconsin--Madison\\  Mathematics Department\\
480 Lincoln Dr., Madison, WI, 53706, USA\\  denissov@math.wisc.edu\\ \vspace{0.5cm}
Keldysh Institute for Applied Mathematics, Russian Academy of Sciences\\
Miusskaya pl. 4, 125047 Moscow, RUSSIA\\aptekaa@keldysh.ru
\end{flushleft}
}\maketitle

\begin{abstract}
Given any $\delta\in(0,1)$, we define the Steklov class $S_\delta$ to be the set of
probability measures $\sigma$ on the unit circle $\mathbb T$, such that
$\sigma'(\theta)\ge\delta/(2\pi)>0$ at every Lebesgue point of $\sigma$.
One can define the orthonormal polynomials $\phi_n(z)$ with respect to $\sigma\in S_\delta$.
In this paper, we obtain the sharp estimates on the uniform norms $\|\phi_n\|_{L^\infty(\mathbb T)}$
as $n\to\infty$ which settles a question asked by Steklov in 1921. As an important intermediate step,
we consider the following variational problem. Fix $n\in\mathbb N$ and define
$M_{n,\delta}=\smash{\sup\limits_{\sigma\in S_\delta}}\|\phi_n\|_{L^\infty(\mathbb T)}$.
Then, we prove
\[
C(\delta)\sqrt n < M_{n,\delta}\le \sqrt{\frac{n+1}\delta}\,\,.
\]
A new method is developed that can be used to study other important
variational problems. For instance, we prove the sharp estimates for
the polynomial entropy in the Steklov class.
\end{abstract}\vspace{1cm}

 {\Large\part*{Introduction.}}\bigskip

\large One version of the Steklov's problem  (see~\cite{1}, \cite{2}) is to obtain the bounds
on the sequence of polynomials $\{P_{n}(x)\}_{n=0}^{\infty}$, which are orthonormal
\begin{equation}\label{1}
\int^1_{-1}P_n(x)\,P_m(x)\,\rho(x)\,dx=\delta_{n,m}\;,\quad n,m=0,1,2\,\ldots
\end{equation}
with respect to the strictly positive weight $\rho$:
\begin{equation}\label{2}
\rho(x)\ge\delta>0\;,\quad x\in[-1,1].
\end{equation}
In 1921, V.A.~Steklov made a conjecture that a sequence $\{P_n(x)\}$ is bounded at any point
$x_{}{\in}(-1,1)$, i.e.,\begin{equation}\label{obs}
\limsup_{n\to\infty}|P_n(x)|<\infty
\end{equation}
provided that the weight $\rho$ does not vanish on $[-1,1]$. On page 321, he writes
(adapted translation from French):\smallskip

{\it ``I believe that inequality \eqref{obs} is the common property of all polynomials whose
orthogonality weight $\rho$ does not vanish inside the given interval, but so far I haven't
succeeded in finding either the rigorous proof to that statement or an example when this
estimate does not hold at each interior point of the given interval".\smallskip}

This problem and some related questions gave rise to extensive
research, see, e.g., \cite{Ger1,Ger2, Ger3, Gol} and the survey
\cite{2} for a detailed discussion and the list of references. In
1979, Rakhmanov~\cite{3} disproved this conjecture by constructing a
weight from the Steklov class \eqref{2}, for which
$$
\limsup_{n\to\infty}|P_n(0)|=\infty\;.$$
It is known (see, for example~\cite{5}) that the following bound
$$
|P_n(x)|=o(\sqrt n)$$ holds for any $x\in(-1,1)$ as long as $\rho$
satisfies \eqref{2}. In his next paper~\cite{4}, Rakhmanov proved
that for every $\e>0$ and $x_0\in(-1,1)$ there is a weight
$\rho(x;x_0,\e)$ from the Steklov class such that the corresponding
$\{P_n(x)\}$ grow as
\begin{equation}\label{3}
|P_{k_n}(x_0)|\ge k_n^{1/2-\e}\;,\quad\end{equation}
where $\{k_n\}$ is some subsequence in $\mathbb N$. In \cite{murman},
the size of the polynomials was studied for the continuous weight.\smallskip

All Rakhmanov's counterexamples were obtained as corollaries of the corresponding results
for the polynomials $\{\phi_n\}$ orthonormal on the unit circle
\begin{eqnarray}\label{4}
\int_0^{2\pi}\phi_n(e^{i\theta},\sigma)\,\overline{\phi_m(e^{i\theta},\sigma)}\,d\sigma(\theta)=
\delta_{n,m}\;,\quad
n,m=0,1,2\,\ldots\,,\\\phi_n(z,\sigma)=\lambda_nz^n+\ldots,\quad
\lambda_n>0\nonumber
\end{eqnarray}
with respect to measures from the Steklov class $S_{\delta}$ defined as the class of
probability measures $\sigma$ on the unit circle satisfying
\begin{equation}\label{gabor}\sigma'\ge\delta/(2\pi)\end{equation}
at every Lebesgue point. The version of Steklov's conjecture for
this situation would be to prove that the sequence
$\{\phi_n(z,\sigma)\}$ is bounded in $n$ at every $z{\in}\mathbb T$
provided that $\sigma{\in}S_\delta$.\smallskip

This conjecture might be motivated by the following estimate.
Consider the Christoffel-Darboux kernel
\[
K_n(\xi,z,\mu)=\sum^n_{j=0}\overline{\phi_j(\xi,\mu)}\phi_j(z,\mu)
\]
for $\xi=z$ as the function of $\mu$. If $\mu_1\le\mu_2$, then (see
\cite{5} or \cite{6})
\begin{equation}\label{monotona}
K_n(z,z,\mu_2)\le K_n(z,z,\mu_1),\quad z\in\mathbb T\,.
\end{equation}
Here we do not assume $\mu_{1(2)}$ to be probability measures, of course. Therefore, if
$\sigma{\in}S_\delta$, we get
\begin{equation}\label{chezaro}
K_n(z,z,\sigma)\le\frac{n+1}\delta,\quad\text{i.e.}\quad\frac1{n+1}\sum_{j=0}^n
|\phi_j(z,\sigma)|^2\le\delta^{\,-1}\, .
\end{equation}
by taking $\mu_1=\delta(2\pi)^{-1} d\theta$ in \eqref{monotona}.

So, on average the polynomials $\phi_n(z,\sigma)$ are indeed bounded
in $n$ and one might want to know whether they are bounded for all
$n$. Rakhmanov proved the following Theorem which gave a negative
answer to this question.\bigskip
\begin{theorem}{\cite{4}}\label{rrra}
Let $\sigma{\in}S_\delta$, where $\delta$ is sufficiently small. Then,
for every sequence $\{\beta_n\}: \beta_n{\to}0$, there is $\sigma{\in}S_\delta$ such that
\begin{equation}\label{est-ra}
\|\phi_{k_n}(z,\sigma)\|_{L^\infty(\mathbb T)}>\beta_{k_n}\sqrt{\frac{k_n}{\ln^3 k_n}}\end{equation}
for some sequence $\{k_n\}\subset\mathbb N$.
\end{theorem}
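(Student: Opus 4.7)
My plan is to reduce the theorem to a single-scale extremal construction and then to assemble many such constructions at widely separated degrees. Concretely, I would first prove a local claim: for every sufficiently large $n$ there is a probability measure $\sigma_n\in S_\delta$, with perturbation from the normalized Lebesgue measure supported on a small arc $I_n\subset\mathbb T$ of length $\sim 1/n$, such that
\[
\|\phi_n(\cdot,\sigma_n)\|_{L^\infty(\mathbb T)}\gtrsim\sqrt{n/\ln^3 n}.
\]
The building block would be a polynomial pulse: a polynomial $Q_n$ of degree $\lesssim n$ satisfying $|Q_n|^2\ge \delta$ on $\mathbb T$ whose $L^2$-mass is concentrated near a single point, so that $1/|Q_n|^2$ looks like a constant plus a smoothed point mass. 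Taking $d\sigma_n=c_n\,d\theta/|Q_n|^2$ keeps $\sigma_n$ in $S_\delta$, and via a Geronimus/Christoffel-type relation $\phi_n(\cdot,\sigma_n)$ can be written in closed form in terms of $Q_n$ and the Szeg\H o polynomials for Lebesgue measure (i.e.\ $z^j$). The logarithmic factor would appear in the width/height trade-off for $Q_n$ via a Bernstein-type inequality, preventing us from reaching the full $\sqrt n$.

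Next I would convert the one-index statement into the sequence statement by a strongly lacunary superposition. Given $\beta_n\to 0$, choose a rapidly growing sequence $k_1<k_2<\cdots$ and small scaling parameters $\e_n$ with $\e_n\to 0$ slowly enough that $\beta_{k_n}\e_n^{-1}\to 0$, and set
\[
\sigma=\frac{d\theta}{2\pi}+\sum_{n\ge1}\e_n\,d\mu_n,
\]
where $d\mu_n=d\sigma_{k_n}-d\theta/(2\pi)$ is supported on the tiny arc $I_{k_n}$. By selecting $k_{n+1}/k_n$ enormous, we arrange the arcs $I_{k_n}$ to be disjoint and the frequencies to be well separated. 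The bookkeeping $\sum \e_n\|\mu_n\|<\infty$ with appropriate sign choices keeps $\sigma$ a probability measure in $S_\delta$ (possibly with a slightly smaller constant absorbed into $\delta$).

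The remaining task is a stability argument: for the composite $\sigma$, the orthonormal polynomial $\phi_{k_n}(\cdot,\sigma)$ should still attain the lower bound from the single-scale construction, up to a factor comparable to $\e_n$. The plan is to compare Gram matrices of $\{1,z,\dots,z^{k_n}\}$ in $L^2(\sigma)$ and $L^2(\sigma_{k_n})$: the contributions from all other bumps $\mu_m$ ($m\ne n$) are tiny because of frequency separation (the Fourier coefficients of indicators of $I_{k_m}$ up to order $k_n$ are essentially flat for $m>n$, and negligibly small in $L^2$ for $m<n$). A perturbation estimate for the Cholesky factor then transfers the single-scale growth to $\phi_{k_n}(\cdot,\sigma)$, with the $\e_n$ loss absorbed into the permitted $\beta_{k_n}$.

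The hard part will be the single-scale construction: engineering $Q_n$ so that $1/|Q_n|^2$ is nearly a delta function while still staying above $\delta$, and showing that the extremality of $\phi_n$ translates the concentration of the weight into sup-norm growth of order $\sqrt{n/\ln^3 n}$. The logarithmic loss in \eqref{est-ra} is intrinsic to this Bernstein/Remez-type problem rather than to the superposition, which is why I expect the final theorem to inherit exactly this rate. The superposition and stability estimates are technical but robust, provided the lacunarity of $\{k_n\}$ is chosen large enough relative to the decay of $\{\beta_n\}$.
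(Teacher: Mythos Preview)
This theorem is cited from Rakhmanov's paper \cite{4} and is not proved in the present paper; rather, the paper establishes the sharper Theorem~\ref{rrra-i} (no logarithmic loss) by an entirely different method. So there is no ``paper's own proof'' to compare against directly, but it is worth comparing your outline both to Rakhmanov's original idea (sketched in the introduction) and to the paper's approach to the stronger result.

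Your single-scale construction has a genuine gap. A pure Bernstein--Szeg\H{o} weight $d\sigma_n=c_n\,d\theta/|Q_m|^2$ with $\deg Q_m\le n$ cannot produce growth of $\phi_n$ while staying in $S_\delta$. Indeed, for such a weight one has $\phi_n(z,\sigma_n)=c'\,z^{n-m}Q_m^*(z)$ with $|c'|=(2\pi c_n)^{-1/2}$, so on $\mathbb T$
\[
\|\phi_n(\cdot,\sigma_n)\|_{L^\infty(\mathbb T)}=|c'|\,\|Q_m\|_{L^\infty(\mathbb T)}.
\]
But the Steklov condition $c_n/|Q_m|^2\ge\delta/(2\pi)$ forces $|Q_m|^2\le 2\pi c_n/\delta$, hence $\|\phi_n\|_{L^\infty(\mathbb T)}\le\delta^{-1/2}$, uniformly in $n$. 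Your description is also internally inconsistent: if $|Q_n|^2\ge\delta$ and the $L^2$-mass of $Q_n$ is concentrated near a point, then $1/|Q_n|^2$ has a \emph{dip}, not a smoothed point mass. The phenomenon you want is the opposite: the weight should be a constant plus a near-singular positive bump, so that the density stays above $\delta/(2\pi)$ while the extra mass distorts $\phi_n$. That is precisely what Rakhmanov does --- he \emph{adds} point masses to the Lebesgue background and uses the explicit formula (the Lemma quoted after Theorem~\ref{rrra} in the introduction) for $\Phi_n$ under such an addition. The logarithmic loss in \eqref{est-ra} comes from constraints in that point-mass calculus, not from any Bernstein/Remez inequality for a factor $Q_n$.

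Your assembly step (lacunary superposition plus a Gram-matrix stability argument) is in the right spirit, and is loosely analogous to the inductive perturbation scheme the paper uses to pass from the single-$n$ estimate (Theorem~\ref{T3-i}, via Lemma~\ref{vozmuse}) to Theorem~\ref{rrra-i}. The paper's version is cleaner: it perturbs the weight in $L^p$ by an arbitrarily small amount at each stage and uses only continuity of $\sigma\mapsto\phi_j(1,\sigma)$, so no delicate frequency-separation or Cholesky perturbation analysis is needed. But none of this matters until the single-scale building block is fixed; as written, your $\sigma_n$ cannot exhibit any growth at all.
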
\bigskip

This estimate is almost sharp due to the following result  (see,
e.g., \cite{neva}, p.11 for the real line case; \cite{5}, p. 32,
theorem~3.5 for the pointwise estimate).
\begin{theorem}\label{neva-ger}
For $\sigma\in S_\delta$, we have
\begin{equation}\label{malenkoeo}
\|\phi_n(z,\sigma)\|_{L^\infty(\mathbb{T})}={o}(\sqrt{n})\, .
\end{equation}
\end{theorem}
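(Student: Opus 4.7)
\emph{Plan of proof.} The upper bound $|\phi_n(z,\sigma)|^2 \le K_n(z,z,\sigma) \le (n+1)/\delta$ is immediate from \eqref{chezaro} and already gives the $O(\sqrt n)$ estimate. To upgrade this to $o(\sqrt n)$, my plan is to argue by contradiction: assume there exist $\epsilon > 0$, a subsequence $\{n_k\}\subset\mathbb N$, and points $z_k\in\mathbb T$ with $|\phi_{n_k}(z_k,\sigma)|^2 \ge \epsilon\, n_k$ for all $k$, and derive a contradiction with \eqref{chezaro}.

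The crux will be to show that such a spike propagates forward in $n$: for every $m$ in a block $[n_k, n_k+N_k]$ with $N_k\to\infty$, the modulus $|\phi_m(z_k)|^2$ stays of order $\epsilon n_k$. The mechanism I would use is the Szeg\H{o} recursion
\[
\sqrt{1-|\alpha_m|^2}\,\phi_{m+1}(z) = z\,\phi_m(z) - \bar\alpha_m\,\phi_m^*(z),
\]
where $\{\alpha_m\}$ are the Verblunsky coefficients and $\phi_m^*(z) = z^m \overline{\phi_m(1/\bar z)}$. Since $|\phi_m^*(z)| = |\phi_m(z)|$ on $\mathbb T$, the reverse triangle inequality yields the uniform two-sided comparison $|\phi_{m+1}(z)|/|\phi_m(z)| = 1 + O(|\alpha_m|)$. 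The Steklov hypothesis $\sigma'\ge\delta/(2\pi)$ forces Szeg\H{o}'s condition $\int_{\mathbb T}\log\sigma'\,d\theta>-\infty$, so the Verblunsky coefficients are square summable: $\sum_m |\alpha_m|^2<\infty$. By Cauchy--Schwarz, $\sum_{j=n}^{n+N-1}|\alpha_j| \le \sqrt{N\,r_n}$ where $r_n := \sum_{j\ge n}|\alpha_j|^2 \to 0$, so one can take $N(n)\to\infty$ (for instance $N(n)=\lfloor 1/(16 r_n)\rfloor$) while keeping $\sum_{j=n}^{n+N(n)-1}|\alpha_j|$ uniformly small.

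Setting $N_k = N(n_k)$ and iterating the Szeg\H{o} comparison across $[n_k, n_k+N_k]$ then gives $|\phi_m(z_k)|^2 \ge (\epsilon/3)\, n_k$ for every $m$ in that block, so
\[
K_{n_k+N_k}(z_k,z_k,\sigma) \ge \frac{\epsilon\,n_k\,(N_k+1)}{3}.
\]
Combined with the bound $K_{n_k+N_k}(z_k,z_k,\sigma) \le (n_k+N_k+1)/\delta$ from \eqref{chezaro}, this forces $N_k \le C(\epsilon,\delta)$ for all large $k$, contradicting $N_k\to\infty$.

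The principal obstacle I foresee is the uniform quantitative ratio control: one must keep the telescoping product built from the Szeg\H{o} recursion bounded away from $0$ uniformly in $z\in\mathbb T$ while $N_k\to\infty$. Square summability of the Verblunsky coefficients, which the Steklov condition delivers automatically, is precisely the input that makes this possible via Cauchy--Schwarz; beyond that, the argument reduces to routine summation and the application of \eqref{chezaro}.
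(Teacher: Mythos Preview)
Your proposal is correct and follows essentially the same route as the paper's proof: both argue by contradiction, use the Szeg\H{o} recursion together with $|\phi_m^*|=|\phi_m|$ on $\mathbb T$ to get uniform ratio control $|\phi_{m+1}|/|\phi_m|\approx 1$, invoke the Szeg\H{o} condition (implied by $\sigma\in S_\delta$) to make the Verblunsky coefficients small, propagate the spike across a block, and contradict \eqref{chezaro}. The only cosmetic difference is that the paper works with a \emph{fixed} block length $K$ (needing only $\gamma_n\to 0$, then letting $K\to\infty$ at the end), whereas you use a growing block $N_k\to\infty$ via Cauchy--Schwarz on the $\ell^2$ tail; both versions yield the same contradiction.
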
\noindent
(for completeness, we give the proof in the end of Appendix A).\smallskip

In the proof of the Theorem \ref{rrra}, an important role was played
by the following extremal problem. For a fixed $n$, define
\begin{equation}\label{6}
M_{n,\delta}=\sup\limits_{\sigma\in S_{\delta}}\|\phi_n(z,\sigma)\|_{L^\infty(\mathbb T)}\, .
\end{equation}

One of the key results in~\cite{4} is the following inequality
\begin{equation}\label{7}
C\,\sqrt{\frac{n+1}{\delta\ln^3 n}}\le M_{n,\delta}\;,\quad C>0\;.
\end{equation}
We recall here a well-known estimate (see~\cite{5}):
\begin{lemma}\label{ots-sv} We have\begin{equation}\label{8}
M_{n,\delta}\le\sqrt{\frac{n+1}\delta}\;,\quad n{\in}\mathbb N\;.
\end{equation}\end{lemma}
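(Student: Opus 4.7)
The proof should be almost immediate from the Christoffel--Darboux estimate \eqref{chezaro} that has already been established in the excerpt. My plan is to extract the inequality \eqref{8} directly from the pointwise bound on the reproducing kernel.

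First, I would recall that for $\sigma\in S_\delta$ the monotonicity property \eqref{monotona} applied with $\mu_1=\delta(2\pi)^{-1}d\theta$ and $\mu_2=\sigma$ yields
\[
K_n(z,z,\sigma)=\sum_{j=0}^n|\phi_j(z,\sigma)|^2\le K_n(z,z,\delta(2\pi)^{-1}d\theta)=\frac{n+1}{\delta}
\]
for every $z\in\mathbb T$, because the orthonormal polynomials with respect to $\delta(2\pi)^{-1}d\theta$ are $\delta^{-1/2}z^j$, so $K_n(z,z,\mu_1)=(n+1)/\delta$ on $\mathbb T$.

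Next, since every term in the Christoffel--Darboux sum is nonnegative, I would drop all of them except the one with $j=n$, obtaining
\[
|\phi_n(z,\sigma)|^2\le\sum_{j=0}^n|\phi_j(z,\sigma)|^2\le\frac{n+1}{\delta},\qquad z\in\mathbb T.
\]
Taking the square root and then the supremum over $z\in\mathbb T$ gives $\|\phi_n(\,\cdot\,,\sigma)\|_{L^\infty(\mathbb T)}\le\sqrt{(n+1)/\delta}$, and finally taking the supremum over $\sigma\in S_\delta$ yields \eqref{8}.

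There is no real obstacle here: all the work has already been done in arriving at \eqref{chezaro}. The one substantive point one must be careful about is the justification of \eqref{monotona} applied in the above form, which requires only that the comparison measure $\delta(2\pi)^{-1}d\theta$ be dominated by $\sigma$, a fact that follows from the Steklov condition $\sigma'\ge\delta/(2\pi)$ at every Lebesgue point together with the absolute continuity of $\delta(2\pi)^{-1}d\theta$ (any singular part of $\sigma$ only helps). Everything else reduces to the trivial observation that a single squared term in a sum of squares does not exceed the whole sum.
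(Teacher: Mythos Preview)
Your argument is correct and is exactly the paper's primary proof: it notes that \eqref{8} is immediate from \eqref{chezaro} by dropping all but the $j=n$ term. The paper also records a second, independent derivation (bounding $\|\phi_n\|_{L^2(\mathbb T)}^2\le 2\pi/\delta$ from the Steklov condition and then applying Cauchy--Schwarz to pass to $L^\infty$), but your route already matches the main one.
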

\begin{proof}Indeed, this is immediate from the estimate
\eqref{chezaro}. We can also argue differently:
\[
1=\int_{\mathbb T}|\phi_n|^2d\sigma\ge\delta/(2\pi)\int_{\mathbb T}|\phi_n|^2d\theta
\]
so \eqref{8} follows from
\[
\|\phi_n\|_{L^2(\mathbb{T})}^2=\left\|\vphantom\sum\smash{\sum^n_{j=0}}c_jz^j\right\|_{L^2(\mathbb{T})}^2=
2\pi\sum^n_{j=0}|c_j|^2\le\frac{2\pi}\delta
\]
and Cauchy-Schwarz
\[
\|\phi_n\|_{L^\infty(\mathbb{T})}\le (n+1)^{1/2}\sqrt{\sum_{j=0}^n
|c_j|^2}\, .
\]
\end{proof}
{\bf Remark.} Notice that all we used in the proof is the
normalization $\|\phi_n\|_{L^2(\mathbb{T},\sigma)}=1$ and the
Steklov's condition on the measure. The problem, though, is whether
the orthogonality leads to further restrictions on the size.

\bigskip

The purpose of the current paper is to obtain the sharp bounds for
the problem of Steklov, i.e., the problem of estimating the growth
of $\phi_n$. We will get rid of the logarithmic factor in the
denominator in \eqref{est-ra} and \eqref{7} and thus prove the
optimal inequalities. The main results are contained in the
following two statements:\bigskip
\begin{theorem}\label{T3-i}If $\delta\in (0,1)$, then
\begin{equation}\label{osnova-i}M_{n,\delta} > C(\delta)\sqrt n\,\, .
\end{equation}
\end{theorem}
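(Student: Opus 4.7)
The plan is to prove the lower bound by constructing, for each large $n$, a measure $\sigma_n \in S_\delta$ such that $\|\phi_n(\cdot,\sigma_n)\|_{L^\infty(\mathbb T)} \ge C(\delta)\sqrt n$. A first reduction writes every candidate as $d\sigma = (\delta/(2\pi))\,d\theta + d\nu$ with $\nu\ge 0$ and $\nu(\mathbb T)=1-\delta$; any such $\sigma$ automatically lies in $S_\delta$, so the problem becomes the choice of $\nu$. The heuristic target is to force $\phi_n$ to resemble a Dirichlet/Fej\'er type polynomial, as these are the prototypes of degree-$n$ polynomials saturating the ratio $\|P\|_{L^\infty(\mathbb T)}/\|P\|_{L^2(d\theta)} \asymp \sqrt n$ that is consistent with the $L^2$ upper bound used in Lemma~\ref{ots-sv}.

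The main step is an inverse construction. I would fix a target polynomial $P_n$ of degree $n$ with $\|P_n\|_{L^\infty(\mathbb T)}\asymp\sqrt n\,\|P_n\|_{L^2(d\theta)}$ (for instance a Dirichlet kernel whose zeros have been pushed slightly inside the open disk) and try to realize $P_n$ as the $n$-th orthonormal polynomial of some $\sigma_n\in S_\delta$. The Bernstein--Szeg\H{o} density $1/|P_n|^2$ cannot belong to $S_\delta$ on its own because it is too small where $|P_n|$ is large, so I would take $\sigma_n$ to be the sum of the Lebesgue piece $(\delta/(2\pi))\,d\theta$ and a small measure $\nu$ supported on a finite set of points — for instance near the zeros of $P_n$ — with weights tuned so that the orthogonality conditions $\int P_n\overline{z^k}\,d\sigma_n=0$ for $k=0,\ldots,n-1$ hold up to a controlled error. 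The estimate on $|\phi_n(z_0,\sigma_n)|$ at the point $z_0$ where $|P_n|$ is maximal would then come from the Christoffel-function identity
\[
|\phi_n(z_0,\sigma_n)|^{2} \;=\; \frac{1}{\lambda_n(z_0,\sigma_n)} - \frac{1}{\lambda_{n-1}(z_0,\sigma_n)},
\]
combined with the universal upper bound $1/\lambda_{n-1}(z_0)\le n/\delta$ from \eqref{chezaro} and a matching lower bound $1/\lambda_n(z_0)\ge n/\delta + c(\delta)\,n$, the latter obtained by testing the variational definition of $\lambda_n(z_0,\sigma_n)$ against $P_n/P_n(z_0)$.

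The principal obstacle is the compatibility between the Steklov floor $\sigma_n'\ge\delta/(2\pi)$ and the orthogonality constraint: the correction measure $\nu$ must be rich enough to enforce the orthogonality that shapes $\phi_n$ into (something close to) $P_n$, yet lean enough not to destroy the sharp $\sqrt n$ peak through interactions with the Lebesgue component. The extra logarithmic factor in Rakhmanov's bound \eqref{7} came precisely from a suboptimal trade-off at this step. Removing it seems to require either a geometrically optimized choice of the support of $\nu$ (plausibly at the nodes of a para-orthogonal polynomial adapted to $P_n$), or a non-perturbative variational argument that directly produces the order-$n$ gap between $1/\lambda_n(z_0,\sigma_n)$ and $1/\lambda_{n-1}(z_0,\sigma_n)$ from properties of the Toeplitz moment matrix associated to $\sigma_n$. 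I expect this sharp calibration of $\nu$ to be the technical heart of the argument.
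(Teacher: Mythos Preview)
Your proposal is not a proof but a strategy, and the strategy you outline is essentially Rakhmanov's: write $\sigma=(2\pi)^{-1}\delta\,d\theta+\nu$ with $\nu$ a sum of point masses placed to force $\phi_n$ to resemble a Dirichlet/Fej\'er polynomial. You correctly identify that the logarithmic loss in \eqref{7} comes from the trade-off at exactly this step, and you then say only that removing it ``seems to require'' a sharper calibration, without supplying one. That is the gap. The Christoffel-function route you sketch does not close it either: testing $\lambda_n(z_0)$ with $P_n/P_n(z_0)$ gives $1/\lambda_n(z_0)\gtrsim n$, but you also need $1/\lambda_{n-1}(z_0)$ to stay a definite multiple of $n$ below this, and nothing in your outline produces that separation---indeed, the same test polynomial (or its degree-$(n-1)$ analogue) will typically give comparable lower bounds for both.

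The paper's argument is structurally different. It does \emph{not} perturb a background by point masses. Instead (Lemma~\ref{decop}, the Decoupling Lemma) it reverse-engineers the polynomial: one writes down an explicit $\phi_n^*$ (built from a Fej\'er--Riesz factor $Q_m$ and a symmetric piece $Q_m+Q_m^*$, with a small correction $P_m$ to push zeros off $\mathbb T$) together with a Caratheodory function $\widetilde F$, and checks directly that (i) $\phi_n^*$ has no zeros in $\overline{\mathbb D}$, (ii) $|\phi_n^*(1)|\sim\sqrt n$, (iii) the normalization $\int|\phi_n^*|^{-2}d\theta\sim 1$ holds, and (iv) the inequality \eqref{main} holds on $\mathbb T$. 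Condition (iv) translates, via \eqref{facti1}, into the Steklov bound for an absolutely continuous measure whose Schur parameters are those of $\phi_n$ followed by those of $\widetilde\sigma$. The point is that the Steklov condition is rewritten as a pointwise inequality involving $\phi_n,\phi_n^*,\widetilde F$, and one constructs objects satisfying it, rather than trying to enforce orthogonality against a prescribed background. This is what eliminates the logarithm (Theorem~\ref{T3}, for some small fixed $\delta_0$).

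The passage from $\delta_0$ to arbitrary $\delta\in(0,1)$ is a second, separate idea (Theorem~\ref{lemma21} and Lemma~\ref{vozmuse}): one takes the $\delta_0$-construction, rescales it, implants it on a small arc around $z=1$ inside an arbitrary Steklov weight $w$, and uses Bernstein's localization estimate \eqref{localization} together with the decay \eqref{pol-sv} of the constructed polynomial away from $z=1$ to show that the implanted peak survives. Your proposal contains neither of these two mechanisms.
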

\smallskip and\smallskip
\begin{theorem}\label{rrra-i}Let $\delta{\in}(0,1)$. Then, for every positive sequence
$\{\beta_n\}:\lim_{n\to\infty}\beta_n=0$, there is a probability
measure
$\sigma^*:d\sigma^*={\sigma^*}'d\theta,\quad\sigma^*{\in}S_\delta$
such that
\begin{equation}\label{est-ra-i}
\|\phi_{k_n}(z,\sigma^*)\|_{L^\infty(\mathbb T)}\ge
\beta_{k_n}\sqrt{k_n}
\end{equation}
for some sequence $\{k_n\}\subset\mathbb N$.\end{theorem}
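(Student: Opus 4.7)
The plan is to derive Theorem~\ref{rrra-i} from the per-$n$ extremal bound Theorem~\ref{T3-i} by amalgamating a sparse sequence of extremal measures into a single absolutely continuous measure in $S_\delta$. The general scheme parallels Rakhmanov's derivation of his Theorem~\ref{rrra} from the weaker \eqref{7}: the sharper per-$n$ inequality of Theorem~\ref{T3-i} (without the $(\ln n)^{3/2}$ loss) propagates through the amalgamation to yield the sharp $\beta_n\sqrt n$ subsequence growth in place of the $\beta_n\sqrt{n/\ln^3 n}$ of Theorem~\ref{rrra}.

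First, fix $\delta'\in(\delta,1)$ slightly above $\delta$ and, for each large $N$, use Theorem~\ref{T3-i} to obtain an absolutely continuous measure $\tau_N\in S_{\delta'}$ with $\|\phi_N(\cdot,\tau_N)\|_{L^\infty(\mathbb T)}\ge C(\delta')\sqrt N$; by a standard density truncation one may also assume each $\tau_N$ has bounded density. Then choose a rapidly growing sequence $\{k_n\}\subset\mathbb N$ together with spacers $M_n\sim k_n/\beta_{k_n}^2$, and build $\sigma^*$ by implanting features of $\tau_{k_n}$ into a background of Lebesgue measure at widely separated scales. The most transparent realization is at the level of Verblunsky coefficients: at block $n$, set $\alpha_j^* = e^{i\psi_n(j-M_n)}\alpha_{j-M_n}^{(n)}$ for $M_n\le j<M_n+k_n$ (where $\{\alpha_j^{(n)}\}$ are the Verblunsky coefficients of $\tau_{k_n}$ and $\psi_n$ is a free phase to be chosen), and $\alpha_j^*=0$ otherwise.

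The Szeg\H{o} transfer-matrix recursion with vanishing coefficients on $[0,M_n)$ yields $(\phi_{M_n},\phi_{M_n}^*)(z,\sigma^*)=(z^{M_n},1)$, so the next $k_n$ steps apply the transfer-matrix product $A^{(n)}(z)$ of $\tau_{k_n}$ (with polynomial entries $a_n,b_n,c_n,d_n$ of degree $\le k_n$, twisted by $\psi_n$) to the initial vector $(z^{M_n},1)^T$ rather than $(1,1)^T$. This gives $\phi_{M_n+k_n}(z,\sigma^*)=a_n(z)z^{M_n}+b_n(z)$ on $\mathbb T$, while $\phi_{k_n}(z,\tau_{k_n})$ corresponds to $a_n(z)+b_n(z)$ (up to the same twist). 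For $M_n>k_n$ the Fourier supports of the two summands are disjoint, and adjusting $\psi_n$ to align the phase of $a_n(e^{i\theta_0})\overline{b_n(e^{i\theta_0})}e^{iM_n\theta_0}$ at the maximizer $\theta_0$ of $|\phi_{k_n}(\cdot,\tau_{k_n})|$ yields
\[
\|\phi_{M_n+k_n}(\cdot,\sigma^*)\|_{L^\infty(\mathbb T)} \ge (1-o(1))\|\phi_{k_n}(\cdot,\tau_{k_n})\|_{L^\infty(\mathbb T)} \ge c(\delta')\sqrt{k_n}.
\]
With $\tilde k_n:=M_n+k_n\approx k_n/\beta_{k_n}^2$, this produces $\|\phi_{\tilde k_n}(\cdot,\sigma^*)\|_{L^\infty(\mathbb T)}\ge \beta_{\tilde k_n}\sqrt{\tilde k_n}$ after passing to a subsequence.

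The main obstacle I anticipate is verifying that $\sigma^*$ actually belongs to $S_\delta$: the Steklov pointwise lower bound on the density is not transparently encoded in the Verblunsky coefficients. This likely forces a density-level reformulation of the construction, in which $d\sigma^*/d\theta=\delta/(2\pi)+(1-\delta)g^*$ with $g^*\ge 0$ chosen as a sparse superposition of density bumps derived from the $\tau_{k_n}$; the transfer-matrix analysis above would then need to be adapted to this density-level construction, with the super-exponential growth of $k_n$ ensuring sufficient decoupling of scales so that each $\phi_{\tilde k_n}(\cdot,\sigma^*)$ effectively sees only its corresponding block. A secondary obstacle is the phase-alignment step itself, which is essentially a Bernstein-type argument exploiting the disparity between the slow variation of $a_n,b_n$ and the fast oscillation $e^{iM_n\theta}$.
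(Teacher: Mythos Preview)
Your proposal has a genuine gap, and you have correctly identified it yourself: the Verblunsky-coefficient concatenation does not control the pointwise lower bound on the density. Inserting blocks $\{\alpha_j^{(n)}\}$ from Steklov measures $\tau_{k_n}$ separated by zeros produces a measure whose density is given (on $\mathbb T$) by the infinite product in the Szeg\H{o} theory, and there is no mechanism ensuring $\sigma^{*\prime}\ge\delta/(2\pi)$; in fact one expects the density of such a concatenated measure to dip well below $\delta/(2\pi)$ at many points. Your suggested remedy --- switch to a density-level superposition $d\sigma^*/d\theta=\delta/(2\pi)+(1-\delta)g^*$ and then ``adapt the transfer-matrix analysis'' --- is not a fix but a restatement of the problem: once you abandon the block Verblunsky structure, the identity $\phi_{M_n}(z,\sigma^*)=z^{M_n}$ and the clean transfer-matrix computation both collapse, and you have no tool left to compare $\phi_{\tilde k_n}(\cdot,\sigma^*)$ with $\phi_{k_n}(\cdot,\tau_{k_n})$.

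The paper's argument stays at the density level throughout and supplies exactly the comparison tool you are missing. The key is a Bernstein-type \emph{localization principle} (Theorem~\ref{lemma21}): if two weights $w_1,w_2$ agree on an arc $[-\epsilon,\epsilon]$, then $|\phi_n(1,w_1)/\phi_n(1,w_2)|$ is controlled by an integral over $\{|\theta|>\epsilon\}$ involving $|\phi_n(e^{i\theta},w_1)\phi_n(e^{i\theta},w_2)|$. Combined with the pointwise decay estimate \eqref{pol-sv} for the extremal polynomial away from $\theta=0$, this yields a perturbation lemma (Lemma~\ref{vozmuse}): any Steklov weight $w$ can be modified by an arbitrarily small amount in $L^p$ (only on a tiny arc near $1$, where one implants a rescaled copy of the extremal weight from Theorem~\ref{T3}) to a new Steklov weight $\widetilde w$ with $|\phi_n(1,\widetilde w)|\gtrsim\sqrt n$. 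The proof of Theorem~\ref{rrra-i} is then a straightforward induction: at step $n$ one applies Lemma~\ref{vozmuse} with $\epsilon$ so small that (i) the already-established inequalities $|\phi_{k_j}(1,w_n)|>\beta_{k_j}\sqrt{k_j}$ for $j\le n$ survive by continuity of $\phi_j(1,\cdot)$ in the weak-$(*)$ topology, and (ii) the Steklov lower bound only drops from $\delta+(1-\delta)2^{-n}$ to $\delta+(1-\delta)2^{-(n+1)}$. The weights $w_n$ are Cauchy in $L^p$, and the limit is the desired $\sigma^*$. No phase alignment or Verblunsky bookkeeping is needed.
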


{\bf Remark.} It will be clear later that both results hold for far
more regular weights (see Lemma~\ref{build-up} and the proof of
Theorem~\ref{rrra-i} below).

\bigskip

The Steklov condition \eqref{gabor} is quite natural for the
analysis of the ``size" of the polynomial. Indeed, if $\mu$ is a
measure (not necessarily a probability one) then the trivial scaling
\[
\phi_n(z,m\mu)=\frac{\phi_n(z,\mu)}{\sqrt m}
\]
holds and this changes the size of $\phi_n$ accordingly. Let now
$\mu$ be a probability measure and $\Gamma$ be a small arc which
does not support  all of $\mu$. Then, we can can take
$\mu_m=\chi_{\Gamma^c}\cdot\mu+m\chi_\Gamma\cdot\mu$ with $m$ very
small. So, $\|\mu_m\|\sim\|\chi_{\Gamma^c}\cdot\mu\|\sim 1$ and one
can expect that $\phi_n(z,m\mu)$ gets large on most of $\Gamma$ as
$m\,{\to}_{}0$ in analogy to the case of the whole $\mathbb T$. This
is indeed true for many measures $\mu$. Thus, if one studies the
dependence of $\|\phi_n(z,\mu)\|_{L^\infty(\mathbb T)}$ on $\mu$,
then the conditions on the measure which control the size of the
polynomial should account for that fact and an obvious bound that
takes care of this is \eqref{gabor} as it does not allow the measure
to be scaled on any arc.\smallskip

The problem of estimating the size of $\phi_n$ is one of the most
basic and most well-studied problems in approximation theory.
Nevertheless, the sharp bounds were missing even for the Steklov's
class -- the most natural class of measures for this problem. In the
current paper, we not only establish these bounds but  also suggest
a new method which, we believe, is general enough to be used in the
study of other variational problems where the constructive
information on the weight is given. For example, one can replace the
Steklov's condition by the lower bounds like
\[
\sigma'(\theta)\ge w(\theta), \quad {\rm for\, a.e.}\, \theta\in
\mathbb{T}
\]
where $w(\theta)$ vanishes at a point in a particular way (e.g.,
$w(\theta)\sim |\theta|^\alpha$). The results we obtain are sharp
and we apply them to estimate  polynomial entropies -- another
important quantity to measure the size of the polynomial.\smallskip

{\bf Remark.} The size, asymptotics, and universality of the
Christoffel-Darboux kernel were extensively studied, see, e.g.
\cite{doron1, doron2, mnt, tot, tot2}. We, however, will focus on
$\phi_n$ itself.
\bigskip

{\bf Remark.} Since $S_\delta$ is invariant under the rotation and
$\{e^{i\,j\theta_0}\phi_j(ze^{-i\theta_0},\mu)\}$ are orthonormal
with respect to $\mu(\theta-\theta_0)$, we can always assume that
$\|\phi_n\|_\infty$ is reached at point $z=1$. Therefore, we have
\[
M_{n,\delta}=\sup_{\mu\in S_\delta}|\phi_n(1,\mu)|\,\,.
\]
One can consider the monic orthogonal polynomials $\Phi_n(z,\mu)=z^n+\ldots$ and the Schur parameters
$\{\gamma_n\}$ so that
\[
\phi_n(z,\mu)=\frac{\Phi_n(z,\mu)}{\|\Phi_n\|_{L^2(\mathbb{T},\mu)}}
\]
and
\begin{equation}\label{schur}
\gamma_n=-\overline{\Phi_{n+1}(0,\mu)}
\end{equation}
If $\rho_n=\sqrt{1-|\gamma_n|^{2^{\vphantom+}}}$, then (see
\cite{sim1})
\begin{equation}\label{gabor1}
\Phi_n(z,\mu)=\phi_n(z,\mu)\lambda_n^{-1},\quad
\lambda_n=\Bigl(\rho_0\cdot\ldots\cdot\rho_{n-1}\Bigr)^{-1}\,\,.
\end{equation}
 The Szeg\H{o} formula \cite{sim1} yields
\begin{equation}\label{gabor2}
\exp\left(\frac1{4\pi}\int^\pi_{-\pi}\ln(2\pi\mu'(\theta))d\theta\right)=\prod_{j\,\ge0}\rho_j\,\,.
\end{equation}
So, for $\mu\in S_\delta$, we have
\[
\sqrt{\delta\,}\le\prod_{n\ge0}\rho_n\le1
\]
and therefore
\[
\sqrt{\delta\,}|\phi_n(z,\mu)|\le|\Phi_n(z,\mu)|\le|\phi_n(z,\mu)|,\quad z\in\mathbb C
\]
for any $\mu\in S_\delta$. Thus, we have
\begin{equation}\label{tot-por}
\delta^{1/2}M_{n,\delta}\le \sup_{\mu\in S_\delta}
\|\Phi_n(z,\mu)\|_{L^\infty(\mathbb T)} \le M_{n,\delta}
\end{equation}
and for fixed $\delta$ the variational problems for
orthonormal and monic orthogonal polynomials are equivalent.\bigskip

The estimate \eqref{8} can not possibly be sharp for $\delta$ very
close to $1$. Indeed, if $\delta=1$ then $\sigma$ is the Lebesgue
measure and $\phi_n(z)=z^n$. We have the following result which
provides an effective bound and improves \eqref{8} for $\delta$
close to $1$.
\begin{lemma}\label{l1}
We have
\[
M_{n,\delta}\le \delta^{-1/2}\left(
1+\sqrt{\frac{n(1-\delta)}{\delta}}\right)
\]
\end{lemma}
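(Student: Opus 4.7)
The plan is to bound $|\phi_n(1,\mu)|$, which by the rotation invariance of $S_\delta$ equals $\|\phi_n\|_{L^\infty(\mathbb T)}$ after a rotation of $\mu$. Decompose $\phi_n=\lambda_n\Phi_n$ with $\Phi_n(z)=z^n-Q_*(z)$, where $Q_*\in\mathcal{P}_{n-1}$ (polynomials of degree at most $n-1$) is the $L^2(\mu)$-orthogonal projection of $z^n$ onto $\mathcal{P}_{n-1}$, so that $\Phi_n$ is the monic orthogonal polynomial of degree $n$. The triangle inequality then gives $|\phi_n(1,\mu)|=\lambda_n|1-Q_*(1)|\le \lambda_n+\lambda_n|Q_*(1)|$, reducing the task to separate estimates of $\lambda_n$ and $|Q_*(1)|$.

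The leading coefficient is handled exactly as in Lemma~\ref{ots-sv}: the Steklov condition together with $\|\phi_n\|_{L^2(\mu)}=1$ gives $\delta\lambda_n^2\le\delta\|\phi_n\|_{L^2(d\theta/(2\pi))}^2\le 1$, hence $1\le\lambda_n\le\delta^{-1/2}$ (the lower bound coming from the extremal property of $\Phi_n$ tested against $z^n$ in the probability measure $\mu$). For $|Q_*(1)|$, I will use the reproducing-kernel identity $Q_*(1)=\langle Q_*,\,K_{n-1}(\cdot,1,\mu)\rangle_{L^2(\mu)}$ (valid because $Q_*\in\mathcal{P}_{n-1}$) combined with Cauchy--Schwarz:
\[
|Q_*(1)|^2 \le \|Q_*\|_{L^2(\mu)}^2\cdot K_{n-1}(1,1,\mu).
\]
The norm factor is provided by Pythagoras applied to the $L^2(\mu)$-orthogonal decomposition $z^n=\Phi_n+Q_*$: $\|Q_*\|_{L^2(\mu)}^2=\|z^n\|_{L^2(\mu)}^2-\|\Phi_n\|_{L^2(\mu)}^2 = 1-\lambda_n^{-2}$. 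The kernel factor is controlled by the CD monotonicity \eqref{monotona}, comparing $\mu$ with $\delta\,d\theta/(2\pi)$, which gives $K_{n-1}(1,1,\mu)\le n/\delta$.

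Assembling the pieces,
\[
|\phi_n(1,\mu)| \le \lambda_n + \lambda_n\sqrt{(1-\lambda_n^{-2})\,n/\delta} = \lambda_n + \sqrt{(\lambda_n^2-1)\,n/\delta}.
\]
This right-hand side is increasing in $\lambda_n\in[1,\delta^{-1/2}]$, so substituting $\lambda_n=\delta^{-1/2}$ yields $\delta^{-1/2}+\sqrt{(\delta^{-1}-1)\,n/\delta} = \delta^{-1/2}\bigl(1+\sqrt{n(1-\delta)/\delta}\bigr)$, which is the claimed bound. The main conceptual step is identifying this splitting: the leading coefficient $\lambda_n$ supplies the ``$1$'' inside the parentheses, while the reproducing-kernel/Pythagoras pair delivers the $\sqrt{n(1-\delta)/\delta}$ term. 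Each individual inequality is elementary and already available in the paper, so no serious obstacle arises beyond finding the right decomposition of $\phi_n$.
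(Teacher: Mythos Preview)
Your proof is correct, but it follows a genuinely different route from the paper's. The paper decomposes the \emph{measure} as $d\sigma=(2\pi)^{-1}\delta\,d\theta+d\widetilde\mu$ with $\|\widetilde\mu\|=1-\delta$, then uses the variational characterization $\Phi_n=\arg\min_{P=z^n+\ldots}\|P\|_{L^2(\sigma)}^2$: testing with $P=z^n$ forces the $\ell^2$-norm of the lower-order coefficients of $\Phi_n$ to satisfy $\|\widetilde a\|_{\ell^2}^2\le(1-\delta)/\delta$, and Cauchy--Schwarz on coefficients gives $\|\Phi_n\|_\infty\le 1+\sqrt{n(1-\delta)/\delta}$; the factor $\delta^{-1/2}$ then comes from \eqref{tot-por}. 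You instead decompose the \emph{polynomial} as $\Phi_n=z^n-Q_*$ and work entirely in $L^2(\mu)$: Pythagoras gives $\|Q_*\|_{L^2(\mu)}^2=1-\lambda_n^{-2}$, the reproducing kernel plus the CD monotonicity \eqref{monotona} give $|Q_*(1)|^2\le(1-\lambda_n^{-2})\,n/\delta$, and you finish by maximizing in $\lambda_n$. The paper's argument yields a slightly stronger intermediate statement (a bound on the Euclidean coefficient norm of $\Phi_n$), while your argument stays within the kernel machinery already set up around \eqref{monotona}--\eqref{chezaro} and avoids appealing to the existence of a maximizer or to \eqref{tot-por}.
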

\begin{proof}
 Let
$\widetilde\sigma$ be one of the maximizers for $M_{n,\delta}$,
i.e., $
\|\phi_n(z,\widetilde\sigma)\|_{L^\infty(\mathbb{T})}=M_{n,\delta}
$. The existence of such a maximizer is proved in Theorem \ref{etre}
below. Then, $ d\widetilde\sigma=(2\pi)^{-1}\delta
d\theta+d\widetilde\mu $ where $\|\widetilde\mu\|=1-\delta$. Let
$\Phi_n(z,\widetilde \sigma)$ be the corresponding monic polynomial.
We use the variational characterization of $\Phi_n$ (see \cite{6}):
$ \Phi_n=\arg \min_{P(z)=z^n+\ldots}
\|P\|^2_{L^2(\mathbb{T},\widetilde\sigma)} $. If
$\Phi_n=z^n+\widetilde a_{n-1}z^{n-1}+\ldots+\widetilde
a_1z+\widetilde a_0$, then
\[
\widetilde a=\arg D_{n,\delta},\, D_{n,\delta}=\min_{a=(a_0,
\ldots,\, a_{n-1})} \left(\delta\|a\|^2_{\ell^2}+\int_{\mathbb{T}}
|z^n+ a_{n-1}z^{n-1}+\ldots+ a_1z+ a_0|^2d\widetilde\mu\right)
\]
In particular, upon choosing $a=0$, we get $ D_{n,\delta}\le
\int_{\mathbb{T}} d\widetilde\mu=1-\delta $ and so $ \|\widetilde
a\|^2_{\ell^2}\le (1-\delta)/\delta $. Then, Cauchy-Schwarz
inequality gives
\[
\|\Phi_n\|_{L^\infty(\mathbb{T})}\le
1+\sqrt{\frac{n(1-\delta)}{\delta}}
\]
and \eqref{tot-por} finishes the proof.
\end{proof}

Now, we would like to comment a little on the methods we use. The
proofs by Rakhmanov were based on the following formula for the
orthogonal polynomial that one gets after adding several point
masses to a ``background" measure at particular locations on the
circle (see \cite{3}).

\begin{lemma}Let $\mu$ be a positive measure on $\mathbb T$, $\Phi_n(z,\mu)$ be the corresponding
monic orthogonal polynomials, and
\[
K_n(\xi,z,\mu)=\sum^n_{l=0}\overline{\phi_j(\xi,\mu)}\phi_j(z,\mu)
\]
be the Christoffel-Darboux kernel, i.e.
\[
P(\xi)=\langle
P(z),K_n(\xi,z,\mu)\rangle_{L^2(\mathbb{T},\mu)},\quad\deg P\le
n\,\,.
\]
Then, if $\xi_j\in\mathbb T,j=1,\,...\,m,\,m\le n$ are chosen such that
\begin{equation}\label{uslo-k}
K_{n-1}(\xi_j,\xi_l,\mu)=0, \quad j\neq l\end{equation}
then
\begin{equation}
\Phi_n(z,\eta)=\Phi_n(z,\mu)-\sum_{k=1}^m
\frac{m_k\Phi_n(\xi_k,\mu)}{1+m_kK_{n-1}(\xi_k,\xi_k,\mu)}K_{n-1}(\xi_k,z,\mu)
\end{equation}
where
\[
\eta=\mu+\sum_{k=1}^m m_k\delta_{\theta_k}, \quad z_k=e^{i\theta_k},\quad m_k\ge 0\,\,.
\]
\end{lemma}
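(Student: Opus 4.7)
The plan is to write $\Phi_n(z,\eta)$ as a correction of $\Phi_n(z,\mu)$ using a linear combination of the Christoffel--Darboux kernels at the points $\xi_k$, and then to determine the coefficients by imposing orthogonality with respect to $\eta$. Because each $K_{n-1}(\xi_k,z,\mu)$, as a function of $z$, is a polynomial of degree $n-1$, the ansatz
\[
Q(z)=\Phi_n(z,\mu)-\sum_{k=1}^{m}c_k\,K_{n-1}(\xi_k,z,\mu)
\]
is automatically monic of degree $n$. By uniqueness of the monic orthogonal polynomial of degree $n$ for $\eta$, it suffices to choose the $c_k$ so that $Q$ is orthogonal in $L^{2}(\mathbb T,\eta)$ to every polynomial $P$ with $\deg P\le n-1$, and then $Q=\Phi_n(\cdot,\eta)$.

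Split the orthogonality integral as
\[
\int P\,\overline{Q}\,d\eta=\int P\,\overline{Q}\,d\mu+\sum_{l=1}^{m}m_l\,P(\xi_l)\,\overline{Q(\xi_l)}.
\]
For the background part, $\int P\,\overline{\Phi_n(\,\cdot\,,\mu)}\,d\mu=0$ because $\Phi_n(\cdot,\mu)\perp P$ in $L^{2}(\mathbb T,\mu)$, while the reproducing property quoted in the statement gives
\[
\int P(z)\,\overline{K_{n-1}(\xi_k,z,\mu)}\,d\mu(z)=P(\xi_k),\qquad \deg P\le n-1,
\]
so the $\mu$-integral contributes $-\sum_k \bar c_k\,P(\xi_k)$. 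For the point-mass part, the decoupling hypothesis $K_{n-1}(\xi_k,\xi_l,\mu)=0$ for $k\neq l$ kills the off-diagonal terms in $Q(\xi_l)$, leaving
\[
Q(\xi_l)=\Phi_n(\xi_l,\mu)-c_l\,K_{n-1}(\xi_l,\xi_l,\mu).
\]
Since $K_{n-1}(\xi_l,\xi_l,\mu)$ is real, combining the two contributions yields
\[
\int P\,\overline{Q}\,d\eta=\sum_{l=1}^{m}\Bigl[-\bar c_l+m_l\,\overline{\Phi_n(\xi_l,\mu)}-m_l\,\bar c_l\,K_{n-1}(\xi_l,\xi_l,\mu)\Bigr]P(\xi_l).
\]

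Because $m\le n$ and the $\xi_l$ are distinct, Lagrange interpolation furnishes polynomials of degree $\le n-1$ attaining arbitrary prescribed values at $\xi_1,\dots,\xi_m$. Consequently, the integral vanishes for all admissible $P$ if and only if each bracket vanishes, which is a diagonal scalar equation in $\bar c_l$ with the unique solution
\[
c_l=\frac{m_l\,\Phi_n(\xi_l,\mu)}{1+m_l\,K_{n-1}(\xi_l,\xi_l,\mu)}.
\]
Substituting these values back into $Q$ produces exactly the claimed formula. The only place requiring care is the bookkeeping of complex conjugates in the reproducing kernel and the use of the assumption $m\le n$ to guarantee solvability of the interpolation; the decisive structural input is the hypothesis $K_{n-1}(\xi_k,\xi_l,\mu)=0$ for $k\neq l$, which converts what would otherwise be an $m\times m$ linear system into $m$ uncoupled scalar equations.
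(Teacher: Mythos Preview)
Your proof is correct and complete. The paper does not actually prove this multi-point lemma---it merely states it with a citation to Rakhmanov~\cite{3}---but it does prove the single-point version (attributed to Geronimus) by exactly the same mechanism you use: observe that the right-hand side is monic of degree $n$ and then verify $\langle \text{r.h.s.},z^j\rangle_{\eta}=0$ for $j=0,\dots,n-1$. Your argument is the natural extension of that one-line check to $m$ points, with the decoupling hypothesis $K_{n-1}(\xi_k,\xi_l,\mu)=0$ for $k\neq l$ doing precisely the work of diagonalizing the system, and the condition $m\le n$ ensuring the values $P(\xi_1),\dots,P(\xi_m)$ can be prescribed independently.
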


The limitation that $\xi_j$ must be the roots of $K$ is quite
restrictive and the direct application of this formula with
background $d\mu=d\theta$ yields logarithmic growth at best. In the
later paper \cite{4}, Rakhmanov again  ingeniously used the idea of
inserting the point mass but the resulting bound (\ref{7}) contained
the logarithm in the denominator and the measure of orthogonality
was not defined explicitly.\smallskip

We will use a completely different approach. First, we will rewrite
the Steklov condition in the convenient form as some estimate that
involves Caratheodory function and a polynomial (see Lemma
\ref{decop} below). This decoupling is basically equivalent to
solving the well-known truncated trigonometric moments problem.
Then, we will present a particular function and a polynomial and
show that they satisfy the necessary conditions. This allows us to
have a good control on the size of the polynomial itself and on the
structure of the measure of orthogonality.\vspace{1cm}

The paper has four parts and two Appendixes. The first part contains
results on the structure of an optimal measure and discussion of the
case when  $\delta$ is $n$-dependent and very small. In the second
part, the proof of Theorem \ref{T3-i} is given for fixed small
$\delta$.   We will apply the ``localization principle" to handle
every $\delta\in (0,1)$  and prove Theorem \ref{rrra-i} in the third
part. In the last one, two applications are given. First, the lower
bounds are obtained for polynomials orthogonal on the real line.
Then, we prove the sharp estimates for the polynomial entropies in
the Steklov class. The Appendixes contain some auxiliary results we
use in the main text. \vspace{1cm}

Here are some notation used in the paper: the Cauchy kernel for the unit circle is denoted by $C(z,\xi)$, i.e.
\[
C(z,\xi)=\frac{\xi+z}{\xi-z},\quad\xi\in\mathbb T\,\,.
\]
If the function is analytic in $\mathbb D$ and has a nonnegative
real part there, then we will call it Caratheodory function.

 Given
any polynomial $P_n(z)=p_nz^n+\ldots+p_1z+p_0$, we can define its
$n$-th reciprocal (or the $*$--transform)
\[
P_n^*(z)=z^n \overline{P_n(1/{\overline
z})}=\overline{p}_0z^n+\overline{p}_1z^{n-1}+\ldots+\overline{p}_n\,\,.
\]
Notice that if $z^*$ is a root of $P_n(z)$ and $z^*\neq 0$, then
$(\overline{z^*})^{-1}$ is a root of $P_n^*(z)$.

Given two positive functions $F_1$ and $F_2$ defined on $\mathbb D$,
we write $F_1\lesssim F_2$ if there is a constant $C$ (that might
depend only on the fixed parameters) such that
\[
F_1<CF_2~,\quad C>0
\]
on $\mathbb D$. We write $F_1\sim F_2$ if
\[
F_1\lesssim F_2\lesssim F_1\,\,.
\]
We use the notation $b=O^*(a)$ if $b\sim a$. The symbol $\delta_a$
denotes the delta function (the point mass) supported at $a\in
(-\pi,\pi]$ or at complex point $e^{ia}\!\in\!\mathbb T$. If $\e$ is
a positive parameter, then $\e\,{\ll}\,1$ is the shorthand for:
``$\e<\e_0$, where $\e_0$ is sufficiently small". If
$p(z)=a_nz^n+\ldots+a_1z+a_0$, then we define ${\rm
coeff}(p,j)=a_j$.

We will use the following standard notation for the norms. If $\mu$
is a measure, $\|\mu\|$ refers to its total variation. For functions
$f$ defined on $[-\pi,\pi]$, we write
\[
\|f\|_p=\|f\|_{L^p[-\pi,\pi]}, \quad 1\le p\le \infty\,.
\]
The symbol $\langle f, g\rangle_\sigma$ denotes the following  inner
product given by
\[
\langle f, g\rangle_\sigma=\int_{M} f(x)\overline g(x)d\sigma
\]
where $\sigma$ is a measure on $M$ (e.g., $M=\mathbb{T}$ or
$M=[-\pi,\pi]$).

\vspace{1.5cm} {\Large \part{ Variational problem:  structure of the
extremizers}}\bigskip
\section{Structure of the extremal measure.}

In this section, we first address the problem of the existence of maximizers, i.e., $\mu^*_n\in S_\delta$
for which\begin{equation}\label{extre}
M_{n,\delta}=|\phi_n(1;\mu^*_n)|\,\,.
\end{equation}
We will prove that these extremizers exist and will study their properties.

\begin{theorem}\label{etre}
There are $\mu^*_n\in S_\delta$ for which \eqref{extre} holds.
\end{theorem}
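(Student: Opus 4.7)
The plan is to use a standard weak-$*$ compactness argument, with the main work being to verify that both the Steklov constraint and the functional $\mu\mapsto|\phi_n(1,\mu)|$ behave well under passage to the limit.

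First I would take a maximizing sequence $\mu^{(k)}\in S_\delta$ with $|\phi_n(1,\mu^{(k)})|\to M_{n,\delta}$. Since $\mathbb T$ is compact, the set of probability measures on $\mathbb T$ is weak-$*$ compact, so after passing to a subsequence I may assume $\mu^{(k)}\to\mu^*$ weakly-$*$ for some probability measure $\mu^*$.

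Next I would verify $\mu^*\in S_\delta$. The condition $\sigma\in S_\delta$ is equivalent, via the Lebesgue decomposition, to the statement that $\sigma-\frac{\delta}{2\pi}d\theta$ is a nonnegative Borel measure. This is a closed condition under weak-$*$ convergence: testing against nonnegative continuous functions on $\mathbb T$ shows that nonnegativity passes to the weak-$*$ limit. Hence $\mu^*-\frac{\delta}{2\pi}d\theta\ge 0$, i.e.\ $\mu^*\in S_\delta$.

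The main step is to show that $\phi_n(1,\mu^{(k)})\to\phi_n(1,\mu^*)$. The coefficients of $\phi_n(z,\mu)$ depend only on the moments $c_j(\mu)=\int_{\mathbb T}e^{-ij\theta}d\mu(\theta)$ for $|j|\le n$, through Cramer's rule applied to the orthogonality system, namely
\[
\phi_n(z,\mu)=\frac{1}{\sqrt{D_n(\mu)D_{n+1}(\mu)}}\det\begin{pmatrix}c_0&c_{-1}&\cdots&c_{-n}\\ c_1&c_0&\cdots&c_{-n+1}\\ \vdots& & & \vdots\\ c_{n-1}&c_{n-2}&\cdots&c_{-1}\\ 1&z&\cdots& z^n\end{pmatrix},
\]
where $D_k(\mu)$ is the $k\times k$ Toeplitz determinant of moments. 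Since $e^{-ij\theta}$ is continuous on $\mathbb T$, weak-$*$ convergence $\mu^{(k)}\to\mu^*$ gives $c_j(\mu^{(k)})\to c_j(\mu^*)$ for each $j$. Moreover, the Steklov condition $\mu\ge\frac{\delta}{2\pi}d\theta$ implies that the Toeplitz matrix of $\mu$ dominates $\delta$ times the identity, so $D_k(\mu)\ge\delta^k>0$ uniformly along the sequence and at the limit. Consequently the rational expression above is continuous at $\mu^*$, yielding $\phi_n(1,\mu^{(k)})\to\phi_n(1,\mu^*)$, and hence $|\phi_n(1,\mu^*)|=M_{n,\delta}$.

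The only potential obstacle is the continuity claim: one must rule out degeneration of the Gram matrix in the limit, but this is precisely guaranteed by $\mu^*\in S_\delta$, which makes $\mu^*$ supported on infinitely many points with a uniform lower bound on the Toeplitz determinants. All other steps are routine applications of compactness and lower semicontinuity.
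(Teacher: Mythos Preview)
Your proof is correct and follows essentially the same weak-$*$ compactness argument as the paper: extract a weak-$*$ limit of a maximizing sequence, verify closure of $S_\delta$, and use convergence of moments to conclude $\phi_n(1,\mu^{(k)})\to\phi_n(1,\mu^*)$. Your justification that $S_\delta$ is weak-$*$ closed (via nonnegativity of $\sigma-\tfrac{\delta}{2\pi}d\theta$) and your explicit lower bound on the Toeplitz determinants are slightly more detailed than the paper's treatment, which simply invokes the interval inequality $\mu^*([a,b])\ge\delta(b-a)/(2\pi)$ and states moment convergence without further comment.
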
\begin{proof}
Suppose $\mu_k\in S_\delta$ is the sequence which yields the $\sup$, i.e.
\[
|\phi_n(1,\mu_k)|\to M_{n,\delta}, \quad k\to\infty\,\,.
\]
Since the unit ball is weak-($\ast$) compact, we can choose
$\mu_{k_j}\to \mu^*$ and this convergence is weak-($\ast$), i.e.
\[
\int fd\mu_{k_j}\to \int fd\mu^*, \quad j\to\infty
\]
for any $f\in C(\mathbb T)$. In particular, $\mu^*$ is a probability measure. Moreover,
for any interval $(a,b)\subseteq(-\pi,\pi]$, we have (assuming, e.g., that the endpoints $a$ and $b$
are not atoms for $\mu^*$):
\[
\int_{[a,b]} d\mu^*\ge \delta(b-a)/(2\pi)
\]
since each $\mu_{k_j}\in S_\delta$. This implies
${\mu^*}'\ge\delta/(2\pi)$ a.e. on $\mathbb T$. The moments of
$\mu_{k_j}$ will converge to the moments of $\mu^*$ and therefore
\[
\phi_n(1,\mu_{k_j})\to\phi_n(1,\mu^*)\,\,.
\]
Therefore, $\mu^*\in S_\delta$ and $|\phi_n(1,\mu^*)|=M_{n,\delta}$.
\end{proof}

This argument gives existence of an extremizer. Although we do not know whether it is unique,
we can prove that every $d\mu^*$ must have a very special form.

\begin{theorem}\label{spec-forma}
If $\mu^*$ is a maximizer then it can be written in the following
form
\begin{equation}\label{form}
d\mu^*=(2\pi)^{-1}\delta\,d\theta+\sum^N_{j=1}m_j\delta_{\theta_j},\quad
1\le N\le n
\end{equation}
where $m_j\ge 0$ and $-\pi<\theta_1<\ldots<\theta_N\le\pi$.
\end{theorem}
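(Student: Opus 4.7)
The plan is to extract the claimed atomic structure from the first-order extremality condition at $\mu^*$. Decompose $d\mu^* = (2\pi)^{-1}\delta\, d\theta + d\nu^*$, where $\nu^*$ is a positive measure of mass $1-\delta > 0$. The admissible perturbation directions preserving $S_\delta$ and total mass are precisely the signed measures $\eta = \eta_+ - \eta_-$ with $\eta_+ \ge 0$, $\eta_-$ dominated by $\nu^*$ (with bounded Radon--Nikodym derivative), and $\|\eta_+\| = \|\eta_-\|$: for such $\eta$, one has $\mu^* + t\eta \in S_\delta$ for all small $t > 0$, since adding mass anywhere or subtracting from $\nu^*$ cannot violate the lower bound $\delta/(2\pi)$ on the absolutely continuous density.

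Next I would compute the first variation. Using the identity $|\phi_n(1,\mu)|^2 = K_n(1,1,\mu) - K_{n-1}(1,1,\mu)$ together with the variational characterization $1/K_m(1,1,\mu) = \min\{\int |P|^2 d\mu : \deg P \le m,\, P(1) = 1\}$, attained uniquely at $L_m = K_m(1,\cdot,\mu)/K_m(1,1,\mu)$, the envelope theorem yields
\[
\frac{d}{dt}\,|\phi_n(1,\mu^* + t\eta)|^2\Big|_{t=0} = -\int_{\mathbb T} g(\theta)\, d\eta(\theta), \quad g(\theta) := \bigl|K_n(1,e^{i\theta},\mu^*)\bigr|^2 - \bigl|K_{n-1}(1,e^{i\theta},\mu^*)\bigr|^2.
\]
Since $\mu^*$ is a maximizer this derivative must be $\le 0$, i.e., $\int g\, d\eta \ge 0$ for every admissible $\eta$. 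Taking $\eta_+ = c\,\delta_{\theta_+}$ with arbitrary $\theta_+ \in \mathbb T$ and letting $\eta_-$ concentrate near an arbitrary $\theta_- \in \operatorname{supp}(\nu^*)$ gives $g(\theta_+) \ge g(\theta_-)$. Hence, with $m := \min_\theta g(\theta)$, we obtain $g \ge m$ on $\mathbb T$ and $g \equiv m$ on $\operatorname{supp}(\nu^*)$.

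A short expansion via the recursion $K_n(1,\cdot) = K_{n-1}(1,\cdot) + \overline{\phi_n(1)}\phi_n$ shows that $g$ is a real trigonometric polynomial of degree at most $n$. Therefore $g - m$ is a non-negative trigonometric polynomial of degree $\le n$, and by the Fej\'er--Riesz factorization one can write $g - m = |Q(e^{i\theta})|^2$ for some algebraic polynomial $Q$ with $\deg Q \le n$. Since $\operatorname{supp}(\nu^*)$ is contained in the zero set of $Q$ on $\mathbb T$, once we know $Q \not\equiv 0$ we conclude $|\operatorname{supp}(\nu^*)| \le n$. Combined with $\|\nu^*\| = 1 - \delta > 0$, this yields $\nu^* = \sum_{j=1}^N m_j\delta_{\theta_j}$ with $1 \le N \le n$, which is the claimed form of $\mu^*$.

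The step requiring care is the non-degeneracy $Q \not\equiv 0$, i.e., ruling out that $g$ is identically constant. If $g \equiv c$, then integrating against $d\mu^*$ and using the reproducing property of the Christoffel--Darboux kernels gives $c = K_n(1,1) - K_{n-1}(1,1) = |\phi_n(1,\mu^*)|^2$. On the other hand $g(0) = K_n(1,1)^2 - K_{n-1}(1,1)^2 = \bigl(K_n(1,1) + K_{n-1}(1,1)\bigr)|\phi_n(1,\mu^*)|^2$, and combining with $g(0) = c = |\phi_n(1,\mu^*)|^2$ forces $K_n(1,1) + K_{n-1}(1,1) = 1$. This contradicts $K_j(1,1) \ge |\phi_0(1)|^2 = 1$ for every $j \ge 0$, since then the sum is at least $2$. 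Thus $Q \not\equiv 0$ and the proof concludes.
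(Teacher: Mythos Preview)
Your argument is correct and follows the same variational blueprint as the paper (first-order condition yields a real trigonometric polynomial of degree at most $n$ whose extremal level set must contain the support of the ``excess'' part of $\mu^*$), but the implementation differs in two places. First, the paper parameterizes the objective through the moments $(s_0,s_1^R,\ldots,s_n^I)$ and obtains the abstract polynomial $T_n(\theta)=\sum \partial F/\partial s_j^{R,I}\,\cos(j\theta)\ \text{or}\ \sin(j\theta)$, whereas you compute the first variation explicitly via the Christoffel--Darboux extremal problem and the envelope theorem, landing on the concrete expression $g(\theta)=|K_n(1,e^{i\theta})|^2-|K_{n-1}(1,e^{i\theta})|^2$; these are the same function up to sign. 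Second, and more substantively, the paper rules out degeneracy by proving a separate lemma that $\nabla_{s_n}|\phi_n(1)|^2\neq 0$ using the Schur recursion, which forces $T_n$ to have exact degree $n$; you instead rule out $g\equiv\text{const}$ by integrating against $\mu^*$ (reproducing property) and evaluating at $\theta=0$, reducing to the impossible identity $K_n(1,1)+K_{n-1}(1,1)=1$. Your route is more self-contained and avoids the auxiliary lemma, while the paper's route gives the slightly stronger information that the relevant polynomial has top degree, not merely that it is nonconstant. Both deliver the bound $N\le n$ via F\'ejer--Riesz.
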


Suppose we have a positive measure $\mu$ and its moments are given
by
\[
s_j=\int e^{ij\theta}d\mu=s_j^R+i\;\!s_j^I,\quad j=0,1,\ldots
\]
Then, the following formulas are well-known (\cite{sim1})
\begin{equation}\label{det1}
\Phi_n(z,\sigma)={\frak D}_{n-1}^{-1}\left|\begin{array}{cccc}
s_0 & s_1 & \ldots& s_n\\
\overline{s}_{1} & s_0 & \ldots& s_{n-1}\\
\ldots& \ldots& \ldots& \ldots\\
\overline{s}_{n-1} & \overline{s}_{n-2} &\ldots&s_1 \\
1 & z& \ldots& z^n\end{array}\right|\,
\end{equation}

\begin{equation}\label{det2}
{\frak D}_n=\det {\frak T}_n,\quad {\frak
T}_n=\left[\begin{array}{cccc}
s_0 & s_1 & \ldots& s_n\\
\overline{s}_{1} & s_0 & \ldots& s_{n-1}\\
\ldots& \ldots& \ldots& \ldots\\
\overline{s}_{n-1} & \overline{s}_{n-2} &\ldots&s_1 \\
\overline{s}_{n} & \overline{s}_{n-1}& \ldots&
s_0\end{array}\right]\,\,.
\end{equation}
These identities show that $\Phi_n(z,\sigma)$ depends only on the
first $n$ moments of the measure $\sigma$:
$\Phi_n(z,\sigma)=\Phi_n(z,s_0,\ldots,s_n)$.  Moreover, by
definition of the monic orthogonal polynomial,
\[
\Phi_n(z,s_0,s_1,\ldots, s_n)=\Phi_n(z,1, s_1/s_0, \ldots,
s_n/s_0)\, ,
\]
i.e., $\Phi_n(z,\sigma)$ does not depend on the normalization of the
measure.

The functions $F_{1(2)}$, given by
\[
F_1(s)=|\Phi_n(1,s_0,\ldots,s_n)|^2,\quad
F_2(s)=|\phi_n(1,s_0,\ldots,s_n)|^2, \quad
s=(s_0,s_1^R,\ldots,s_n^I)\in \mathbb{R}^{2n+1}
\]
are the smooth functions of the variables
$\{s_0,s_j^R,s_j^I\},j=1,\ldots,n$ wherever they are defined.
Consider $\Omega_n=\{s:{\frak T}_n(s)>0\}$. If $s\in\Omega_n$, then
there is a family of measures $\mu$ which have $(s_0,\ldots,s_n)$ as
the first $n$ moments. That follows from the solution to the
truncated trigonometric moment problem.

We will need the following
\begin{lemma}
The functions $F_{1(2)}(s)$ do not have stationary points on
$\Omega_n$.
\end{lemma}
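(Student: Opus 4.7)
My plan is to show that at every $s\in\Omega_n$ one can exhibit a one-parameter deformation along which the corresponding $F_j$ has nonzero derivative; because $F_1$ is scale-invariant while $F_2$ is not, the two functions will be handled by different devices. Throughout I use that $F_1,F_2>0$ on $\Omega_n$: since $\mathfrak T_n(s)>0$, the truncated trigonometric moment problem admits a positive solution $\mu$, and for any such $\mu$ the monic OPUC $\Phi_n(\cdot,\mu)$ has all zeros strictly inside $\mathbb D$, so $\Phi_n(1,s)\neq 0$.

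For $F_2$ I would simply invoke the scaling identity $\phi_n(z,\lambda\mu)=\lambda^{-1/2}\phi_n(z,\mu)$ ($\lambda>0$), which translates to the homogeneity $F_2(\lambda s)=\lambda^{-1}F_2(s)$. Euler's identity then gives $s\cdot\nabla F_2(s)=-F_2(s)\neq 0$, finishing this case.

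For $F_1$ the same scaling only yields $s\cdot\nabla F_1(s)=0$, so I would instead use a single-point-mass perturbation $\mu\mapsto\mu+\epsilon\delta_0$ (Dirac at $\theta=0$), which moves the moment vector in the fixed direction $\mathbf v=(1,1,0,1,0,\ldots,1,0)\in\mathbb R^{2n+1}$. Applying the single-point-mass formula of the preceding lemma (with $m=1$, $\xi_1=1$, $m_1=\epsilon$) and then setting $z=1$, the two occurrences of $\Phi_n(1,\mu)$ collapse and yield the clean identity
\[
\Phi_n(1,\mu+\epsilon\delta_0)=\frac{\Phi_n(1,\mu)}{1+\epsilon K_{n-1}(1,1,\mu)}.
\]
Differentiating $|\Phi_n(1,\mu+\epsilon\delta_0)|^2$ at $\epsilon=0$ then gives
\[
\mathbf v\cdot\nabla F_1(s)=-2\,|\Phi_n(1,\mu)|^2\,K_{n-1}(1,1,\mu)<0,
\]
since $K_{n-1}(1,1,\mu)\ge|\phi_0(1,\mu)|^2=s_0^{-1}>0$; hence $\nabla F_1(s)\neq 0$.

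The one delicate choice is the location of the point mass: for a generic $\theta_0$ the directional derivative reads $-2\Re[\,\overline{\Phi_n(1,\mu)}\,\Phi_n(e^{i\theta_0},\mu)\,K_{n-1}(e^{i\theta_0},1,\mu)\,]$, whose sign---even nonvanishing---is not transparent. Aligning $\theta_0$ with the evaluation point $z=1$ is what reduces the bracket to $|\Phi_n(1,\mu)|^2K_{n-1}(1,1,\mu)$, and this is the one step one has to spot; the rest of the argument is mechanical.
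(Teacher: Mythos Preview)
Your proof is correct and takes a genuinely different route from the paper's. The paper treats $F_1$ and $F_2$ uniformly by showing that $\nabla_{s_n}F_{1(2)}\neq 0$: it uses the recursion $\Phi_n=\Phi_{n-1}-\overline\gamma_{n-1}\Phi_{n-1}^*$ (and its normalized version) together with the affine dependence $\gamma_{n-1}=\overline s_n\,\mathfrak D_{n-2}/\mathfrak D_{n-1}+f(s_0,\dots,s_{n-1})$ to see that the dependence of $|\Phi_n(1)|^2$ and $|\phi_n(1)|^2$ on $s_n$ is nondegenerate. Your argument instead splits the two cases: the homogeneity $F_2(\lambda s)=\lambda^{-1}F_2(s)$ gives Euler's relation $s\cdot\nabla F_2=-F_2\neq 0$ in one line; for $F_1$ you use the single-mass insertion formula at the evaluation point $z=1$, which collapses to the clean identity $\Phi_n(1,\mu+\epsilon\delta_0)=\Phi_n(1,\mu)/(1+\epsilon K_{n-1}(1,1,\mu))$ and yields a strictly negative directional derivative along $\mathbf v=(1,1,0,\dots,1,0)$.

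What each approach buys: yours is more self-contained and avoids the Schur-parameter bookkeeping; the insertion trick at $z=1$ is particularly neat. The paper's approach, on the other hand, proves the sharper statement recorded in the Remark immediately after the lemma, namely that the $s_n$-component of the gradient is already nonzero. That sharper fact is what the paper actually uses in the proof of Theorem~\ref{spec-forma}: it guarantees that the trigonometric polynomial $T_n(\theta)=\partial_{s_0}F+\partial_{s_1^R}F\cos\theta+\cdots+\partial_{s_n^I}F\sin(n\theta)$ has degree exactly $n$, hence at most $n$ points of global maximum. Your argument only exhibits nonvanishing of $\nabla F$ along the directions $s$ (for $F_2$) and $\mathbf v$ (for $F_1$), so it establishes the lemma as stated but would not directly supply the Remark; if you later need $\deg T_n=n$, you would have to add a separate argument.
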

\begin{proof}
It is known \cite{sim1} that the map between the first $n$ Schur
parameters (see \eqref{schur}) and the first $n$ moments of a
probability measure, i.e., $\{\gamma_j\}|_{j=0}^{n-1}\in
\mathbb{D}^n\to (1,s_1,\ldots, s_n)$, is a bijection.  The formulas
\eqref{schur} and \eqref{det1} imply that
\begin{equation}\label{bie}
\gamma_{n-1}=\overline{s}_n\frac{\frak{D}_{n-2}}{\frak{D}_{n-1}}+f(s_0,s_1,\overline
s_1, \ldots, s_{n-1}, \overline s_{n-1}) \,.
\end{equation}
The both polynomials $\Phi_n$ and $\phi_n$ satisfy the recurrences
(\cite{sim1}, p.57)
\[
\Phi_n(1,\sigma)=\Phi_{n-1}(1,\sigma)-\overline\gamma_{n-1}\Phi_{n-1}^*(1,\sigma)
\]
and
\[
\phi_n(1,\sigma)=\rho_{n-1}^{-1}(\phi_{n-1}(1,\sigma)-\overline\gamma_{n-1}\phi_{n-1}^*(1,\sigma))\,,
\]
which shows that
\[
\nabla_{s_n}{ |\Phi_{n}(1,\sigma)|^2}=\left( \frac{\partial
|\Phi_n(1,\sigma)|^2}{\partial s_n^R},\frac{\partial
|\Phi_n(1,\sigma)|^2}{\partial s_n^I} \right)\neq 0\,,
\]
because $ \Phi_{n-1}(z,\sigma)$ and $ \Phi_{n-1}^*(z,\sigma)$ do not
depend on $s_n$, $\Phi_{n-1}^*(1,\sigma)\neq 0$, and
$\Phi_{n}(1,\sigma)\neq 0$. Since
$\rho_{n-1}=\sqrt{1-|\gamma_{n-1}|^2}$, we have
\[
|\phi_n(1,\sigma)|^2=C\frac{|\xi-\gamma_{n-1}|^2}{1-|\gamma_{n-1}|^2},
\quad
\xi=\frac{\overline\phi_{n-1}(1,\sigma)}{\overline\phi_{n-1}^*(1,\sigma)},
\quad |\xi|=1, \quad C=|\phi_{n-1}^*(1,\sigma)|^2\,.
\]
We can rewrite it as
\[
|\phi_n(1,\sigma)|^2=C\frac{1+|\gamma_{n-1}|^2-2\Re
(\gamma_{n-1}\overline \xi)}{1-|\gamma_{n-1}|^2}\,,
\]
which shows that
\[
\nabla_{\gamma_{n-1}}|\phi_n(1,\sigma)|^2\neq 0\,.
\]
where $|\phi_n(1,\sigma)|^2$ is considered as a function of
$\{\gamma_0,\ldots,\gamma_{n-1}\}$.

 Now \eqref{bie} yields
\[
\nabla_{s_{n}}|\phi_n(1,\sigma)|^2\neq 0
\]
and the proof is finished.
\end{proof}{\bf Remark.}
The proof actually shows that ${\nabla_{s_n} F_{1(2)}}\neq 0$.

\begin{proof}{\it (of the Theorem \ref{spec-forma})}
Our variational problem is an extremal problem for a functional
$F(s_0,s_1^R,\ldots,s_n^I)$ on the finite number of moments
$\{s_0,s_1^R,\ldots,s_n^I\}$ of a measure $\mu$ from $S_\delta$. We
can take
$$
F(s_0,s_1^R,\ldots,s_n^I)=|\phi_n(1,s_0,\ldots,s_n)|^2\;.
$$
The function $F$ is differentiable. Moreover,
\[
s_0=\int d\mu,\quad s_j^R=\int\cos(j\theta)d\mu,\quad s_j^I=\int\sin(j\theta)d\mu\,\,.
\]
Considering the moments as functionals of $\mu$, we compute the
derivative of $F$ at the point $\mu^*$ in the direction $\delta\mu$:
\[
dF=\int\left(\frac{\partial F}{\partial s_0}(s^*)+\frac{\partial F}{\partial
s_1^R}(s^*)\cos(\theta)+\ldots+\frac{\partial F}{\partial s_n^I}(s^*)\sin(n\theta)\right)d(\delta\mu)\,\,.
\]
Consider the trigonometric polynomial:
\[
T_n(\theta)=\frac{\partial F}{\partial s_0}(s^*)+\frac{\partial F}{\partial
s_1^R}(s^*)\cos(\theta)+\ldots+\frac{\partial F}{\partial s_n^I}(s^*)\sin(n\theta)\,\,.
\]
From the previous Lemma and Remark, we know that it has degree $n$.
Let $M=\max T_n(\theta)$ and $\{\theta_j; j=1,\ldots,N\}$ are all
points where $M$ is achieved. Clearly, $N\le n$.

Now, if we find a smooth curve  $\mu(t),~t\in(0,1]$ such that $\mu(t)\in S_\delta, \mu(1)=\mu^*$ and define
\[
H(t)=F(s_0(\mu(t)),s_1^R(\mu(t)),\ldots,s_n^I(\mu(t)),
\]
then $H'(1)\ge 0$ as follows from the optimality of $\mu^*$.

Now, we will assume that the measure $\mu^*$ is not of the form
\eqref{form} and then will come to a contradiction by choosing the
curve $\mu(t)$ in a suitable way.

We will first prove that the singular part of $\mu^*$ can be
supported only at the points $\{\theta_j\}$. Indeed, suppose we have
$$\mu^*=\mu_1+\mu_2$$
where $\mu_2$ is singular and supported away from $\{\theta_j\}$.
Consider two smooth functions $p_1(t)$ and $p_2(t)$ defined on
$(0,1]$ that satisfy
\[
\|\mu_1\|+p_1(t)+p_2(t)\|\mu_2\|=1,\quad p_{1(2)}(t)\ge 0,\quad
p_1(1)=0,\quad p_2(1)=1\,\,.
\]
For example, one can take $p_1(t)=\|\mu_2\|(1-t),\, p_2(t)=t$. Take
$\mu(t)=\mu_1+p_1(t)\delta_{\theta_1}+p_2(t)\mu_2$. We have
$\mu(t)\in S_\delta$ and
\[
H'(1)=\int T_n(\theta)d\mu_2-\|\mu_2\|T_n(\theta_1)<0
\]
since $\theta_1$ is a point of global maximum for $T_n$ and $\mu_2$
is supported away from $\{\theta_j\}$ by assumption. This
contradicts optimality of $\mu^*$ and so $\mu_2=0$.

We can prove similarly now that $(\mu^*)'=(2\pi)^{-1}\delta$ a.e.
Indeed, suppose
\[
\mu^*=\mu_1+\mu_2,\quad
d\mu_2=f(\theta){{}^{{}_{\textstyle\chi}}}_{\Omega}d\theta
\]
where $f(\theta)>(2\pi)^{-1}\delta_1>(2\pi)^{-1}\delta$ on $\Omega$,
$|\Omega|>0$ and $\mu_1$ is supported on $\Omega^c$. We consider the
curve
\[
\mu(t)=\mu_1+p_1(t)\delta_{\theta_1}+p_2(t)\mu_2(t)\,\,.
\]
The choice of $p_{1(2)}$ is the same. Then, $\mu(t)\!\in\!S_\delta$
for $t\!\in\!(1\!-\!\e,1)$ provided that $\e(\delta_1)$ is small.
The similar calculation yields $H'(1)<0$ and that gives a
contradiction.
\end{proof}

Since the maximizer in the Steklov problem is given by \eqref{form},
we want to make an observation. The following result is attributed
to Geronimus (see \cite{5}).

\begin{lemma}
Consider $\mu(t)=(1-t)\mu+t\delta_\beta$ where $t\in (0,1), \beta\in
[-\pi,\pi)$. Then,
\begin{equation}\label{insert}
\Phi_n(z,\mu(t))=\Phi_n(z,\mu)-t\frac{\Phi_n(\xi,\mu)K_{n-1}(\xi,z,\mu)}{1-t+tK_{n-1}(\xi,\xi,\mu)}\,,\xi=e^{i\beta}\,.
\end{equation}\end{lemma}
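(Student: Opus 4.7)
The plan is to verify the formula by uniqueness: denote the right hand side by $R(z)$ and show that (i) $R$ is a monic polynomial of degree $n$, and (ii) $R$ is orthogonal with respect to $\mu(t)$ to all polynomials of degree $< n$. Since the monic orthogonal polynomial of degree $n$ is determined uniquely by these two properties (and is unaffected by the normalization of the measure, as noted in the paper), this will force $R(z)=\Phi_n(z,\mu(t))$.

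Step 1 is immediate: $K_{n-1}(\xi,z,\mu)$ is a polynomial of degree $n-1$ in $z$, so subtracting a multiple of it from the monic $\Phi_n(z,\mu)$ leaves a monic polynomial of degree $n$. Step 2 is the heart of the matter. Using $d\mu(t)=(1-t)d\mu+t\,d\delta_\beta$, for $0\le k\le n-1$ we split
\[
\int\overline{z^k}R(z)\,d\mu(t)=(1-t)\!\int\!\overline{z^k}R(z)\,d\mu+t\,\overline{\xi^k}R(\xi).
\]
The integral against $d\mu$ is computed using $\int\overline{z^k}\Phi_n(z,\mu)\,d\mu=0$ (orthogonality of $\Phi_n$ to lower degrees) together with the reproducing property of the Christoffel--Darboux kernel quoted in the excerpt, which, applied to $P(z)=z^k$, gives $\int\overline{z^k}K_{n-1}(\xi,z,\mu)\,d\mu(z)=\overline{\xi^k}$ (this is the complex conjugate of the identity $\xi^k=\langle z^k,K_{n-1}(\xi,z,\mu)\rangle_\mu$).

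Step 3 is the short algebraic verification. Evaluating $R$ at $\xi$ gives
\[
R(\xi)=\Phi_n(\xi,\mu)\left(1-\frac{t\,K_{n-1}(\xi,\xi,\mu)}{1-t+t\,K_{n-1}(\xi,\xi,\mu)}\right)=\frac{(1-t)\,\Phi_n(\xi,\mu)}{1-t+t\,K_{n-1}(\xi,\xi,\mu)}.
\]
Plugging the two pieces back into the split integral,
\[
\int\overline{z^k}R(z)\,d\mu(t)=-\frac{t(1-t)\,\Phi_n(\xi,\mu)\,\overline{\xi^k}}{1-t+t\,K_{n-1}(\xi,\xi,\mu)}+\frac{t(1-t)\,\Phi_n(\xi,\mu)\,\overline{\xi^k}}{1-t+t\,K_{n-1}(\xi,\xi,\mu)}=0,
\]
as needed. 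Thus $R$ satisfies the defining properties of $\Phi_n(z,\mu(t))$, proving \eqref{insert}.

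The main obstacle, such as it is, will simply be bookkeeping of conjugates and the convention for the reproducing kernel; once the identity $\int\overline{z^k}K_{n-1}(\xi,z,\mu)\,d\mu=\overline{\xi^k}$ is in hand, the cancellation is mechanical. One should also verify that the denominator $1-t+t\,K_{n-1}(\xi,\xi,\mu)$ is nonzero: this is automatic since $t\in(0,1)$ and $K_{n-1}(\xi,\xi,\mu)\ge 0$.
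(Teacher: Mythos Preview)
Your proof is correct and follows exactly the same approach as the paper: verify that the right-hand side is monic of degree $n$ and orthogonal to $z^j$ for $j=0,\ldots,n-1$ with respect to $\mu(t)$, then invoke uniqueness of the monic orthogonal polynomial. The paper simply states these two facts without carrying out the cancellation you wrote down explicitly.
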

\begin{proof}
Notice that the right hand side is a monic polynomial of degree $n$.
Then,
\[
\langle {\rm r.h.s.},z^j\rangle_{\mu(t)}=0, \quad j=0,\ldots, n-1
\]
which yields orthogonality.
\end{proof}

The formula \eqref{insert} expresses monic polynomials obtained by
adding one point mass to an arbitrary measure at any location. One
can try to iterate it to get the optimal measure $d\mu^*$. That,
however, leads to very complicated analysis. \bigskip

\section{The regime of small $n$--dependent $\delta$.}

One can make a trivial observation that if $\mu$ is any positive measure (not necessarily a probability one)
and $\phi_n(z,\mu)$ is the corresponding orthonormal polynomial, then
\begin{equation}\label{scaling}
\phi_n(z,\alpha\mu)=\alpha^{-1/2}\phi_n(z,\mu)
\end{equation}
for every $\alpha>0$. The monic orthogonal polynomials, though, stay unchanged
\[
\Phi_n(z,\alpha\mu)=\Phi_n(z,\mu)\,\,.
\]\bigskip
Now, consider the modification of the problem: we define
\[
\~M_{n,\delta}=\sup_{\mu'\ge\,\delta/(2\pi)}\|\phi_n(z,\mu)\|_{L^\infty(\mathbb{T})}=
\sup_{\mu'\ge\,\delta/(2\pi)}|\phi_n(1,\mu)|,
\]
i.e., we drop the requirement for the measure $\mu$ to be a
probability measure. In this case, the upper estimate for
$\~M_{n,\delta}$ stays the same and the proof of
\[
\~M_{n,\delta}\le\sqrt{\frac{n\!+\!1}\delta}\,\,.
\]
is identical. It turns out that the sharp lower bound  can be easily
obtained in this case.
\begin{theorem}We have
\[
\~M_{n,\delta}=\sqrt{\frac{n\!+\!1}\delta}\,\,.
\]\end{theorem}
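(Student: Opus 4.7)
The upper bound $\widetilde M_{n,\delta}\le\sqrt{(n+1)/\delta}$ has already been obtained in the excerpt (the proof of Lemma \ref{ots-sv} goes through verbatim once the probability normalization is dropped), so the only thing to establish is the matching lower bound. My plan is to construct an explicit one-parameter family $\{\mu_M\}_{M>0}$ of admissible measures along which $|\phi_n(1,\mu_M)|^2\to(n+1)/\delta$.

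The construction is guided by Cauchy--Schwarz: the inequality $|P(1)|^2\le(n+1)\|P\|_{L^2(d\theta/(2\pi))}^2$ is an equality exactly when the coefficients of $P$ are all equal, i.e.\ for $P=cQ$ with $Q(z)=1+z+\cdots+z^n$. The zeros of $Q$ are the $(n+1)$-th roots of unity different from $1$, namely $z_j=e^{2\pi ij/(n+1)}$, $j=1,\ldots,n$. To push $\Phi_n(\cdot,\mu)$ toward $Q$, I will pin those zeros in place by putting a heavy point mass at each of them:
\[
\mu_M=\frac{\delta}{2\pi}\,d\theta+M\sum_{j=1}^{n}\delta_{z_j}.
\]
This $\mu_M$ is admissible since $\mu_M'\ge\delta/(2\pi)$. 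Feeding $Q$ into the variational characterization of $\Phi_n(\cdot,\mu_M)$ gives the key inequality
\[
\|\Phi_n(\cdot,\mu_M)\|_{L^2(\mu_M)}^2\le\|Q\|_{L^2(\mu_M)}^2=\delta(n+1),
\]
because $Q(z_j)=0$. This single bound simultaneously forces $\Phi_n(z_j,\mu_M)\to 0$ as $M\to\infty$ (from the $M$-weighted part of the norm) and, via Parseval applied to the Lebesgue part, bounds the coefficients of $\Phi_n(\cdot,\mu_M)$ uniformly in $M$.

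The rest is a short compactness limit: any subsequential coefficient-wise limit of $\Phi_n(\cdot,\mu_M)$ is a monic polynomial of degree $n$ vanishing at the $n$ distinct points $z_1,\ldots,z_n$, hence must equal $Q$. Therefore $\Phi_n(\cdot,\mu_M)\to Q$, which yields $\Phi_n(1,\mu_M)\to n+1$; and the upper bound $\delta(n+1)$ on $\|\Phi_n(\cdot,\mu_M)\|_{L^2(\mu_M)}^2$, combined with the matching lower bound $\delta\|\Phi_n(\cdot,\mu_M)\|_{L^2(d\theta/(2\pi))}^2\to\delta(n+1)$, forces $\|\Phi_n(\cdot,\mu_M)\|_{L^2(\mu_M)}^2\to\delta(n+1)$ as well. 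Dividing the two limits gives $|\phi_n(1,\mu_M)|^2\to(n+1)/\delta$, which is the desired lower bound. There is no real obstacle here: the argument is essentially one Cauchy--Schwarz computation plus a compactness extraction, and the only piece of insight is recognizing that the equality case of Cauchy--Schwarz dictates the zero configuration $\{z_j\}$.
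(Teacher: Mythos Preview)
Your proof is correct, and the construction is identical to the paper's: both place equal point masses at the nontrivial $(n+1)$-th roots of unity on top of the flat background $\frac{\delta}{2\pi}\,d\theta$, and both recognize that the target polynomial is $Q(z)=1+z+\cdots+z^n$. The difference is purely in how $\Phi_n(\cdot,\mu_M)\to Q$ is established. The paper solves the orthogonality equations explicitly, finding the closed form $\Phi_n=\frac{m}{\delta+m}Q+\frac{\delta}{\delta+m}z^n$ and then reading off $\Phi_n(1)$ and $\|\Phi_n\|_{L^2(\mu_M)}$ exactly; you instead use the variational bound $\|\Phi_n\|_{L^2(\mu_M)}^2\le\|Q\|_{L^2(\mu_M)}^2=\delta(n+1)$ together with a compactness--uniqueness argument in the finite-dimensional coefficient space. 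Your route is softer and avoids the linear algebra (in fact it would work verbatim for any background measure, not just Lebesgue), while the paper's explicit formula gives the exact rate of convergence in $m$. Both are equally valid.
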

\begin{proof}Consider\begin{equation}\label{19}
d\sigma=\frac\delta{2\pi}\,d\theta+\sum^n_{k=1}m_k\,\delta_{\theta_k}\;,\quad\theta_k=\frac
k{n\!+\!1}2\pi\,,\quad k=1,\ldots,n\,.
\end{equation}
We assume that all $m_k\ge 0$. Consider
\begin{equation}\label{21}
\Pi_n(z)=\prod\limits^n_{k=1}(z-\e_k)\;,\quad\e_k=e^{i\theta_k}.
\end{equation}
One gets:
$\Pi_n(z)=1+z+\ldots+z^n,\quad\|\Pi_n\|^2_{L^2(\mathbb{T},\sigma)}=\delta(n\!+\!1).$
We define now
\[
\Phi_n=\Pi_n+Q_{n-1}
\]
where $Q_{n-1}(z)=q_{n-1}z^{n-1}+\ldots+q_1z+q_0$ is chosen to
guarantee the orthogonality
$\langle\Phi_n,z^j\rangle_{\sigma}=0,~j=0,\ldots,n-1$. Suppose that
$m_k=m$ for all $k$. Then, we have the following equations
\[
\delta+\delta q_j+m\sum_{l=0}^{n-1}q_l\sum^n_{k=1}\e_k^{l-j}=0,\quad
j=0,\ldots,n-1\,\,.
\]Then, since\[
\sum^n_{k=0}\e_k^d=0,\quad d\!\in\!\{-n,\ldots,-1,1,\ldots,n\}
\]we get\[
\delta+\delta q_j+m(n+1)q_j -m\sum_{l=0}^{n-1}q_l=0,\quad
j=0,\ldots n{-}1
\]and\[
q_j=-\frac\delta{\delta+m},\quad j=0,\ldots n{-}1\,\,.
\]Thus,\[
\Phi_n(z)=\Phi_n(z,\sigma)=1+\ldots+z^n-\frac{\delta}{\delta+m}(1+\ldots+z^{n-1})=\frac
m{\delta+m}\Pi_n(z)+\frac\delta{\delta+m}z^n\,\,.
\]Now, we have\[
\|\Phi_n\|_{L^\infty(\mathbb{T})}=\Phi_n(1,\sigma)=n+1-\frac\delta{\delta+m}n=1+\frac{mn}{\delta+m}
\]and\[
\|\Phi_n\|^2_{L^2(\mathbb{T},\sigma)}=\delta\left(1+\frac{m^2n}{(\delta+m)^2}\right)+\frac{\delta^2
nm}{(\delta+m)^2}= \delta\left(1+\frac{mn}{\delta+m}\right)\,\,.
\]
For the orthonormal polynomial,
\[
\phi_n(1,\sigma)=\frac{\Phi_n(1,\sigma)}{\|\Phi_n\|_{L^2(\mathbb{T},\sigma)}}\,\,.
\]
For fixed $n$, this gives
\[
\lim_{m\to\infty}\|\phi_n\|_{L^\infty(\mathbb{T})}=\sqrt{(n+1)/\delta}\,\,.
\]\end{proof}
{\bf Remark.} This Theorem has the following implication for our
original problem. Suppose we consider the class $S_\delta$ but
$\delta$ is small in $n$. Then, \eqref{scaling} gives
\[
M_{n,\delta_n}=\sqrt{\frac{n+1}{\delta_n}}(1+o(1))
\]
where
\[
\delta_n=\frac{C}{nm_n},\quad m_n\to+\infty,\quad{\rm as}\quad n\to\infty\,\,.
\]
Thus, for $\delta$ small in $n$, the upper bound for $M_{n,\delta}$
is sharp. If one takes $m_n=1/n$ in the proof above to make the
total mass finite, the polynomials $\phi_n$  are bounded in $n$ as
$\delta\sim1$.\bigskip

\vspace{1.5cm} {\Large \part{ The proof of Theorem \ref{T3-i}: the
case of  small fixed $\delta$}}\bigskip

\section{Lower bounds: fixed $\delta$ and $n$.}\label{sec.3}

In this section, we prove the sharp lower bound for small fixed
$\delta$. The main result is the following Theorem.

\begin{theorem}\label{T3}  There is $\delta_0\in (0,1)$ such that
\begin{equation}\label{osnova}
M_{n,\delta_0} \gtrsim \sqrt n\,\,.
\end{equation}\end{theorem}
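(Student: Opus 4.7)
The plan follows the method announced at the end of the introduction: rewrite the Steklov condition $\sigma' \ge \delta_0/(2\pi)$ as a positivity statement on a Caratheodory function, use the decoupling Lemma~\ref{decop} to reformulate ``$P$ is the $n$-th orthonormal polynomial of some $\sigma \in S_{\delta_0}$'' in terms of a pair $(P,F)$ consisting of a polynomial $P$ of degree $n$ and a Caratheodory function $F$, and then exhibit an explicit such pair for which $|P(1)| \gtrsim \sqrt n$ while the compatibility conditions are satisfied. To carry this out, one parametrizes $\sigma$ by its Caratheodory transform $F_\sigma(z) = \int \tfrac{\xi+z}{\xi-z} d\sigma(\xi)$: the Steklov bound becomes $\Re F_\sigma \ge \delta_0$ on $\mathbb{D}$ together with $F_\sigma(0) = 1$, and the $n$-th orthonormal polynomial of $\sigma$ depends only on the first $n$ Taylor coefficients of $F_\sigma$. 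In this form the measure itself has disappeared and the task becomes a concrete complex-analytic problem on $\mathbb{D}$, which is what makes ``decoupling'' useful.

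Guided by the fact that the Cauchy--Schwarz bound in Lemma~\ref{ots-sv} is saturated only by polynomials resembling $c(1 + z + \ldots + z^n)/\sqrt{n+1}$, and by Theorem~\ref{spec-forma}, which tells us that every extremizer has the form $\tfrac{\delta_0}{2\pi}d\theta$ plus at most $n$ point masses, I would take as $P$ a carefully chosen perturbation of $1 + z + \ldots + z^n$, with zeros pushed slightly inside $\mathbb{D}$ and away from $z = 1$ and with the normalization tuned so that $|P(1)| \sim \sqrt n$ while $\|P\|_{L^2(d\theta/(2\pi))}$ stays of order $1$. I would then construct $F$ as an explicit rational Caratheodory function built from $P$ and $P^*$ with one or two free parameters, designed so that its first $n$ Taylor coefficients match the moments forced on $\sigma$ by the orthogonality conditions $\langle P,z^k \rangle_\sigma = 0$, $k = 0,\ldots,n-1$, and the $L^2(\sigma)$-normalization of $P$.

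It then remains to verify the inequality $\Re F \ge \delta_0$ on $\mathbb{D}$; once this is in hand, the Herglotz theorem returns a measure $\sigma \in S_{\delta_0}$, by construction $P$ is its $n$-th orthonormal polynomial, and $|\phi_n(1,\sigma)| = |P(1)| \gtrsim \sqrt n$, yielding $M_{n,\delta_0} \gtrsim \sqrt n$. The entire technical difficulty sits in this positivity check: it reduces to a concrete trigonometric inequality for a rational function on $\mathbb{T}$ whose shape is dictated by $P$ and $P^*$ and whose behaviour near $z = 1$ is the most delicate, since that is precisely where $|P|$ is largest. The smallness of the fixed $\delta_0$ supplies the only slack available: it must be taken small enough that the positivity margin can be made uniform in $n$ while the ratio $|P(1)|/\|P\|_{L^2(\sigma)}$ still grows like $\sqrt n$. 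Balancing these two requirements inside a clean, explicit formula, rather than through an abstract existence argument, is where the substance of the argument lies.
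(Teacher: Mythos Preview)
Your high-level framing is right—use the Decoupling Lemma~\ref{decop} and exhibit an explicit pair—but you have misread what the lemma actually asks for, and this leads you to miss the structural constraint that drives the whole construction.

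The function $\widetilde F$ in Lemma~\ref{decop} is \emph{not} the Caratheodory transform $F_\sigma$ of the target measure. It is the Caratheodory function of an auxiliary measure $\widetilde\sigma$ whose Schur parameters you append after the first $n$ parameters determined by $\phi_n^*$; see the proof of the lemma and formula~\eqref{facti1}. Consequently $\widetilde F$ is \emph{freely chosen} (subject only to smoothness, positivity of real part, and normalization) and has no Taylor-coefficient matching requirement whatsoever. The Steklov condition on the resulting $\sigma$ is not ``$\Re F\ge\delta_0$'' but the pointwise inequality~\eqref{main}:
\[
|\phi_n^*(z)|+|\widetilde F(z)(\phi_n(z)-\phi_n^*(z))|\lesssim (\Re\widetilde F(z))^{1/2},\quad z\in\mathbb T.
\]
This is the heart of the matter and it forces a very specific architecture. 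Since $|\phi_n^*(1)|\sim\sqrt n$, one needs $\Re\widetilde F\sim n$ near $z=1$; the paper takes $\widetilde F$ with a pole of type $(1+\e_n-z)^{-1}$, so $|\widetilde F|\sim n$ there too. But then the second term blows up unless $\phi_n-\phi_n^*$ vanishes at $z=1$. The paper arranges this by writing $\phi_n^*=P_m+Q_m+Q_m^*$ where $Q_m+Q_m^*$ is $*$-symmetric (hence drops out of $\phi_n-\phi_n^*$) and $P_m$ carries an explicit factor $(1-z)$. Your ``perturbation of $1+z+\cdots+z^n$'' does not have this cancellation built in, and without it condition~\eqref{main} fails near $z=1$.

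Finally, you identify the Steklov check as the main technical difficulty, but in the paper it is relatively short; the harder part is the \emph{normalization} condition~\eqref{norma}, i.e.\ showing $\int_{\mathbb T}|f_n|^{-2}d\theta\sim1$. Because $Q_m+Q_m^*$ has all its zeros on $\mathbb T$ (Lemma~\ref{noli}), the small perturbation $P_m$ must push them off just enough that $|f_n|^{-2}$ remains integrable, uniformly in $n$. This requires locating the near-zeros $\theta_j$ of a highly oscillatory expression and integrating $|f_n|^{-2}$ over their neighborhoods; the paper devotes all of Section~3.4 to it, and the exponent $\alpha\in(1/2,1)$ in both $\widetilde F$ and $P_m$ is there precisely to make that sum converge.
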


{\bf Remark.} In this section, we are not trying to control the size
of $\delta_0$. The full range $\delta\in (0,1)$ will be covered in
part 3 by using certain localization technique.

\subsection{Notation and Basics from the theory of polynomials orthogonal on the
circle.}

 We start by introducing some notation and recalling the relevant
facts from the theory of polynomials orthogonal on the unit circle.

The following trivial Lemma will be needed later (see, e.g.,
\cite{PS}, p. 108)
\begin{lemma}\label{noli}
Let $n\ge1$. If a polynomial $P_n$ of degree at most $n$ has all
zeroes outside $\overline{\mathbb D}$, then $D_n(z)=P_n(z)+P_n^*(z)$
has all (exactly n) zeroes on the unit circle.
\end{lemma}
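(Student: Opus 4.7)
The plan is to reduce the claim to a winding-number count for a finite Blaschke product. Write $h(z)=P_n^*(z)/P_n(z)$, so that $D_n(z)=P_n(z)\bigl(1+h(z)\bigr)$. Since $P_n$ has no zero in $\overline{\mathbb D}$, the zeros of $D_n$ on $\mathbb T$ coincide with the solutions of $h(z)=-1$ there. I would then establish three things, in order: (i) $\deg D_n = n$, so $D_n$ has at most $n$ zeros with multiplicity; (ii) $h$ is a finite Blaschke product of degree $n$; (iii) the equation $h(z)=-1$ has at least $n$ solutions on $\mathbb T$. Together these pin the count to exactly $n$.

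For (i), I would inspect the coefficient of $z^n$ in $D_n$, which is $p_n+\overline{p_0}$. The hypothesis forces $P_n(0)\neq 0$ (else $0$ would be a zero of $P_n$ inside $\overline{\mathbb D}$), and when $\deg P_n=n$ the product-of-roots formula gives $|p_0|=|p_n|\prod_k|\alpha_k|>|p_n|$, so the triangle inequality yields $|p_n+\overline{p_0}|\geq|p_0|-|p_n|>0$. If instead $\deg P_n<n$, then $p_n=0$ while $\overline{p_0}\neq 0$. Either way $\deg D_n=n$.

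For (ii), $h$ is analytic in a neighborhood of $\overline{\mathbb D}$ because $P_n$ has no zero there; $P_n^*$ has exactly $n$ zeros in $\mathbb D$ (the points $\{1/\overline{\alpha_k}\}$, together with a zero of order $n-\deg P_n$ at the origin when the degree drops); and on $\mathbb T$ we have $P_n^*(z)=z^n\overline{P_n(z)}$, hence $|h(z)|=1$. Thus $h$ is a finite Blaschke product of degree $n$, and the argument principle gives
$$
\frac{1}{2\pi i}\oint_{\mathbb T}\frac{h'(z)}{h(z)}\,dz=n,
$$
so a continuous choice of $\arg h(e^{i\theta})$ increases by $2\pi n$ as $\theta$ traverses $[0,2\pi]$.

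For (iii), the intermediate value theorem applied to this net increase of $2\pi n$ forces $h(e^{i\theta})$ to pass through $-1$ at least $n$ times, and matching a zero of $1+h$ of order $m$ at $z_0\in\mathbb T$ to the corresponding $m$ units of local winding past $-1$ shows that the total zero count of $1+h$ on $\mathbb T$ (with multiplicity) is at least $n$. Together with (i), $D_n$ has exactly $n$ zeros on $\mathbb T$. The only mildly delicate point is the multiplicity bookkeeping in (iii); this is a standard fact for finite Blaschke products ($h\colon\mathbb T\to\mathbb T$ is an $n$-fold cover), and can in any case be bypassed by combining (i)'s degree bound with the mere existence of $n$ distinct sign changes in $1+h$ on $\mathbb T$.
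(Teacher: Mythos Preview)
Your proof is correct and takes a genuinely different route from the paper's. The paper also writes $D_n=P_n\bigl(1+P_n^*/P_n\bigr)$, but then argues that $1+P_n^*/P_n$ has nonnegative real part on $\mathbb T$ (since $|P_n^*/P_n|=1$ there), invokes the maximum principle for harmonic functions to conclude $\Re\bigl(1+P_n^*/P_n\bigr)>0$ throughout $\mathbb D$ (after ruling out the degenerate case $P_n\equiv -P_n^*$), and hence $D_n$ has no zeros in $\mathbb D$; the self-reciprocality $D_n^*=D_n$ then forbids zeros in $|z|>1$ as well. The degree is obtained at the end via a separate winding-number argument for $D_n$ itself. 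By contrast, you establish the degree first by an elementary leading-coefficient computation, and then count zeros \emph{on} $\mathbb T$ directly as preimages of $-1$ under the degree-$n$ Blaschke product $h=P_n^*/P_n$, using the $2\pi n$ increase of $\arg h$ to produce $n$ distinct solutions. Your approach is more constructive (it locates the zeros rather than merely excluding the complement) and sidesteps the identically-zero case analysis; the paper's approach is a bit slicker once one is comfortable with Carath\'eodory-type positivity, and makes the symmetry $D_n=D_n^*$ do the heavy lifting. Your final remark that strict monotonicity of $\arg h(e^{i\theta})$ (true for any finite Blaschke product) yields $n$ \emph{distinct} simple crossings is the cleanest way to close the multiplicity issue.
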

\begin{proof}We have
\[
D_n(z)=P_n\left(1+\frac{P_n^*}{P_n}\right),\quad z\!\in\!\mathbb D\,\,.
\]
The first factor has no zeroes in $\mathbb D$. In the second one,
$P_n^*/P_n$ is a Blaschke product (indeed,  $|P_n^*/P_n|\!=\!1$ on
$\mathbb{T}$). So, $\displaystyle 1+\frac{P_n^*}{P_n}$ is
holomorphic in $\mathbb D$, continuous up to the boundary, and its
boundary values belong to the circle with center at $z=1$ and radius
$1$. Thus,
$$\Re\Big(1+\frac{P_n^*}{P_n}\Big)\ge0\;.$$ Therefore,$$\text{either
}\quad\Re\Big(1+\frac{P_n^*(z)}{P_n(z)}\Big)>0~(z\!\in\!\mathbb
D)\quad\text{ or }\quad1+\frac{P_n^*(z)}{P_n(z)}\equiv0.$$ The last
condition implies $P_n=-P^*_n$, so $P_n\ne0$ on $\overline{\mathbb
D}$ by assumption of the Lemma and $P_n\ne0$ on $\overline{\mathbb
C}\setminus\mathbb D$ because it is equal to $-P^*_n$. Then,
$P_n\ne0$ on $\overline{\mathbb C}$ and so
$P_n\equiv\operatorname{const}$. Therefore
$P^*_n=-P_n\equiv\operatorname{const}$. This is possible for $n=0$
only. Finally,
\[
1+\frac{P_n^*}{P_n}
\]
does not have zeroes in $\mathbb D$. Since $D_n$ is invariant under
the $*$--transform, it has the following property: $D_n(w)=0$
implies $D_n(\overline{w}^{-1})=0$. Therefore, $D_n$ has no zeroes
in $|z|>1$ as well.

 One can actually show that $D_n$ has the degree $n$ under the assumptions of the Lemma.
Indeed, if $D_n(z)\!\ne\!0$, $z\!\in\!\mathbb D$, then ${\rm
Var}\arg D_n\big|_{r\mathbb T}\!=\!0$, $r\!<\!1$. Therefore $\;{\rm
Var}\arg D^*_n\big|_{\frac1r\mathbb T}\!=\!2\pi n$
$\implies\deg(D^*_n)\!\ge\!n$. But $D^*_n=D_n$.\bigskip
\end{proof}

 We will be mostly working with the orthonormal polynomials
$\phi_n$ and the corresponding $\phi_n^*$. It is well known
\cite{sim1} that all zeroes of $\phi_n$ are inside $\mathbb D$ thus
$\phi_n^*$ has no zeroes in $\overline{\mathbb D}$. However, we also
need to introduce the second kind polynomials $\psi_n$ along with
the corresponding $\psi_n^*$. Let us recall (\cite{sim1}, p. 57)
that
\begin{equation}\hskip9em\left\{\begin{array}{cc}
\phi_{n+1}=\rho_n^{-1}(z\phi_n-\overline\gamma_n\phi_n^*),&\phi_0=\sqrt{1/|\mu|}=1\\
\phi_{n+1}^*=\rho_n^{-1}(\phi_n^*-\gamma_n z\phi_n),&\phi^*_0=\sqrt{1/|\mu|}=1
\end{array}\right.\quad\text{(probability case)}
\label{srecurs}
\end{equation}
and the second kind polynomials satisfy the recursion with Schur
parameters $-\gamma_n$, i.e.,
\begin{equation}\label{secon}\left\{\begin{array}{cc}
\psi_{n+1}=\rho_n^{-1}(z\psi_n+\overline\gamma_n\psi_n^*),&\psi_0=\sqrt{|\mu|}=1\\
\psi_{n+1}^*=\rho_n^{-1}(\psi_n^*+\gamma_n z\psi_n),&\psi^*_0=\sqrt{|\mu|}=1
\end{array}\right.\end{equation}
The following Bernstein-Szeg\H{o} approximation result is valid:
\begin{theorem}Suppose $d\mu$ is a probability measure and $\{\phi_j\}$ and $\{\psi_j\}$ are the
corresponding orthonormal polynomials of the first/second kind, respectively. Then, for any $N$, the function
\[
F_N(z)=\frac{\psi_N^*(z)}{\phi_N^*(z)}=\int_{\mathbb T}
C(z,e^{i\theta})d\mu_N(\theta),\quad d\mu_N(\theta)=\frac{d\theta}{2\pi|\phi_N(e^{i\theta})|^2}=
\frac{d\theta}{2\pi|\phi^*_N(e^{i\theta})|^2}
\]
has the first $N$ Taylor coefficients  identical to the Taylor
coefficients of the function
\[
F(z)=\int_{\mathbb T}C(z,e^{i\theta})d\mu(\theta)\,\,.
\]
In particular, the polynomials $\{\phi_j\}$ and $\{\psi_j\}$, $j\!\le\!N$ are the orthonormal polynomials
of the first/second kind for the measure $d\mu_N$.
\end{theorem}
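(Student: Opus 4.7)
I will split the proof into three pieces. As setup, expanding $C(z,e^{i\theta}) = 1+2\sum_{k\ge 1}e^{-ik\theta}z^k$ yields $F(z) = s_0 + 2\sum_{k\ge 1}\overline{s_k}z^k$, so the Taylor coefficients of $F$ through order $N$ are equivalent to the moments $s_0,\ldots,s_N$ of $\mu$. I will show (a) $F_N = \psi_N^*/\phi_N^*$ is the Caratheodory function of $d\mu_N$, (b) $F-F_N = O(z^{N+1})$, and (c) equality of the orthonormal polynomials through degree $N$ follows from the equality of these moments.

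For (a), I would first establish the algebraic identity
\[
\phi_n(z)\psi_n^*(z)+\phi_n^*(z)\psi_n(z) = 2z^n
\]
by induction on $n$ from the recursions \eqref{srecurs} and \eqref{secon}; the sign flip $\gamma_n\mapsto -\gamma_n$ that defines the second-kind polynomials is precisely what causes the cross terms to collapse. On $\mathbb T$, $\phi_n^*(e^{i\theta}) = e^{in\theta}\overline{\phi_n(e^{i\theta})}$, so dividing the identity by $\phi_n\phi_n^* = z^n|\phi_n|^2$ yields $\Re F_n(e^{i\theta}) = 1/|\phi_n(e^{i\theta})|^2$. Since $\phi_n^*$ has no zeros in $\overline{\mathbb D}$, $F_n$ is holomorphic there; the recursions at $z=0$ give $\phi_n^*(0)=\psi_n^*(0) = \lambda_n$, hence $F_n(0)=1$. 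The Poisson--Herglotz representation of functions with nonnegative real part then forces $F_n(z) = \int_{\mathbb T} C(z,e^{i\theta})\,d\mu_n(\theta)$ (and pins down $\|\mu_n\|=1$).

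For (b), I would integrate $\phi_N^*(w)C(z,w)$ against $d\mu(w)$. Expanding $C(z,w)=1+2\sum_{m\ge 1}(z/w)^m$ on $|w|=1$ and using $\int\phi_N^*(w)w^{-m}\,d\mu = \overline{\int \phi_N(w)\overline{w}^{N-m}\,d\mu}$, the orthogonality of $\phi_N$ to $1,z,\ldots,z^{N-1}$ annihilates every term with $1\le m\le N$, leaving a constant plus an $O(z^{N+1})$ remainder. On the other hand, $C(z,w)=-1+2w/(w-z)$ shows that the same integral equals $\phi_N^*(z)F(z)-\widetilde Q(z)$ for some polynomial $\widetilde Q$ of degree $\le N$, so $\phi_N^*(z)F(z) = \widetilde Q(z) + O(z^{N+1})$. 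Identifying $\widetilde Q$ with $\psi_N^*$ is the step I expect to be the main obstacle; the cleanest route I see is to verify $\phi_N^*F-\psi_N^* = O(z^{N+1})$ directly by induction on $N$ via the Schur recursions \eqref{srecurs}--\eqref{secon}, starting from the trivial base $F-1 = O(z)$. Dividing by $\phi_N^*$ (nonzero at $0$) then yields $F-F_N = O(z^{N+1})$.

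For (c), the determinantal formulas \eqref{det1}--\eqref{det2} express each $\Phi_j$ as a rational function of $s_0,\ldots,s_j$ only; the second-kind polynomial $\Psi_j$ is built from the same Schur data $\gamma_0,\ldots,\gamma_{j-1}$ (themselves a bijective function of the first $j+1$ moments by \eqref{bie}), and is therefore also determined by the same moments. Step (b) gives $s_j^{(\mu_N)} = s_j^{(\mu)}$ for $j\le N$, so $\Phi_j$ and $\Psi_j$ coincide for the two measures, and the common normalization $\lambda_j = (\rho_0\cdots\rho_{j-1})^{-1}$ (also a function of these moments) yields $\phi_j^{(\mu_N)}=\phi_j^{(\mu)}$ and $\psi_j^{(\mu_N)}=\psi_j^{(\mu)}$ for all $j\le N$.
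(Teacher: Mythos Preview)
The paper does not prove this theorem at all; it is quoted as a standard Bernstein--Szeg\H{o} fact and used as a black box. So there is nothing to compare your argument to---the question is only whether your outline is correct.

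Parts (a) and (c) are fine. The Wronskian-type identity $\phi_n\psi_n^*+\phi_n^*\psi_n=2z^n$ does follow by the one-line induction you describe, and together with $\phi_n^*(0)=\psi_n^*(0)=\lambda_n$ it gives $\Re F_N(e^{i\theta})=|\phi_N(e^{i\theta})|^{-2}$ and $F_N(0)=1$, hence the Herglotz representation with $d\mu_N$. Your step (c) is exactly the determinant/bijection argument the paper itself records around \eqref{det1}--\eqref{bie}.

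The only soft spot is the induction you propose in (b). With $A_N=\phi_N^*F-\psi_N^*$ and $B_N=\phi_NF+\psi_N$, the recursions give $A_{N+1}=\rho_N^{-1}(A_N-\gamma_N zB_N)$; from the Wronskian identity one gets $\phi_N^*B_N-\phi_NA_N=2z^N$, so $A_N=O(z^{N+1})$ forces only $B_N=O(z^N)$ and hence $A_{N+1}=O(z^{N+1})$---one order short. The recursion alone does not close. Two clean fixes: (i) observe that $A_N/B_N=zf_N$ where $f_N$ is the $N$-th Schur iterate of the Schur function of $\mu$ (this is the same one-line induction, using $\gamma_N=f_N(0)$), and then the Wronskian identity rearranges to $B_N=2z^N/(\phi_N^*-zf_N\phi_N)$, which is $O(z^N)$ since $\phi_N^*(0)\ne 0$, whence $A_N=zf_NB_N=O(z^{N+1})$; or (ii) bypass (b) entirely: once you have (a), Lemma~\ref{vspomag} says $\phi_N$ is the $N$-th orthonormal polynomial for $\mu_N$, so the Schur parameters $\gamma_0,\dots,\gamma_{N-1}$ of $\mu_N$ coincide with those of $\mu$, and the bijection \eqref{bie} then gives equality of the first $N$ moments (hence both (b) and (c) at once). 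Either way your plan goes through with a one-line addition.
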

We also need the following Lemma:
\begin{lemma}\label{vspomag}
The polynomial $P_n(z)$ of degree $n$ is the orthonormal polynomial
for a probability measure with infinitely many growth points if and
only if
\begin{itemize}
\item[1.] $P_n(z)$ has all $n$ zeroes inside $\mathbb D$ (counting the multiplicities).
\item[2.] The normalization conditions
\[
\int_\mathbb T\frac{d\theta}{2\pi|P_n(e^{i\theta})|^2}=1~,\quad\operatorname{coeff}(P_n,n)>0
\]
are satisfied.\end{itemize}\end{lemma}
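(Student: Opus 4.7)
The statement is an ``if and only if'' characterization, so there are two implications to establish. Neither direction is deep on its own; the whole point is to isolate a convenient reformulation, and both halves are immediate corollaries of facts already assembled just above.

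For the forward implication, suppose $P_n=\phi_n(\,\cdot\,,\mu)$ for some probability measure $\mu$ with infinitely many growth points. Condition 1 is the classical assertion that the zeros of $\phi_n$ lie inside $\mathbb{D}$; the positivity of the leading coefficient is built into the definition of $\phi_n$. The integral normalization in Condition 2 is where I would invoke the Bernstein--Szeg\H{o} theorem stated in the excerpt. That theorem gives $d\mu_n(\theta)=d\theta/(2\pi|\phi_n^*(e^{i\theta})|^2)$, and matching the constant term of the Carath\'eodory functions $F_n$ and $F$ at $z=0$ forces $\mu_n(\mathbb{T})=\mu(\mathbb{T})=1$. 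Since $|\phi_n^*(e^{i\theta})|=|\phi_n(e^{i\theta})|$ on $\mathbb T$, this is exactly $\int d\theta/(2\pi|P_n|^2)=1$.

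For the reverse implication, assume $P_n$ satisfies Conditions 1 and 2 and simply \emph{define}
\[
d\mu(\theta)=\frac{d\theta}{2\pi\,|P_n(e^{i\theta})|^2}.
\]
By Condition 1 the denominator has no zeros on $\mathbb T$, so $d\mu$ is absolutely continuous with a continuous strictly positive density (in particular, infinitely many growth points), and Condition 2 says exactly that $\mu$ is a probability measure. It remains to check $P_n=\phi_n(\,\cdot\,,\mu)$. Using the identity $\overline{P_n(e^{i\theta})}=e^{-in\theta}P_n^*(e^{i\theta})$ on $\mathbb{T}$, I compute for $0\le k\le n-1$
\[
\int_{\mathbb T}P_n(e^{i\theta})\,\overline{e^{ik\theta}}\,d\mu
=\int_{\mathbb T}\frac{e^{-ik\theta}}{\overline{P_n(e^{i\theta})}}\,\frac{d\theta}{2\pi}
=\int_{\mathbb T}\frac{e^{i(n-k)\theta}}{P_n^*(e^{i\theta})}\,\frac{d\theta}{2\pi}.
\]
By Condition 1, $P_n^*$ has all zeros in $|z|>1$, so $1/P_n^*$ is analytic in $\mathbb D$ and continuous up to $\overline{\mathbb D}$. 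Since $n-k\ge 1$, the last integral is a strictly positive-frequency Fourier coefficient of a function whose negative Fourier coefficients all vanish, hence it is $0$. This gives orthogonality to $1,z,\ldots,z^{n-1}$. The $L^2$-normalization is automatic, $\int|P_n|^2\,d\mu=\int d\theta/(2\pi)=1$, and the leading coefficient is positive by hypothesis, so $P_n$ coincides with $\phi_n(\,\cdot\,,\mu)$.

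The ``main obstacle'' is really just organizational: one must recognize the right measure to write down in the reverse direction and recall the $*$-transform identity $\overline{P_n(e^{i\theta})}=e^{-in\theta}P_n^*(e^{i\theta})$ that converts conjugation on $\mathbb T$ into analyticity inside $\mathbb D$. Everything else is bookkeeping, and the lemma can serve as the entry point for later constructions where one specifies $P_n^*$ (equivalently a Carath\'eodory-like function) and then reads off the measure.
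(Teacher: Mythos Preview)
Your proof is correct. The paper actually states this lemma without proof, treating it as a standard fact from the theory (it follows immediately from the Bernstein--Szeg\H{o} approximation theorem quoted just above it), so there is nothing to compare against beyond noting that your argument supplies exactly the details one would expect: the forward direction via matching the zeroth Taylor coefficient in the Bernstein--Szeg\H{o} theorem, and the reverse direction by writing down the Bernstein--Szeg\H{o} measure $d\mu=d\theta/(2\pi|P_n|^2)$ and verifying orthogonality through the identity $\overline{P_n(e^{i\theta})}=e^{-in\theta}P_n^*(e^{i\theta})$ together with analyticity of $1/P_n^*$ in $\overline{\mathbb D}$.
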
 Now, we are ready to
formulate the main result of this section.

\subsection{The reduction of the problem:  Decoupling Lemma}

The proof of the Theorem \ref{T3} will be based on the following
result.

\begin{lemma}{\rm \bf (The Decoupling Lemma)}\label{decop}
To prove \eqref{osnova}, it is sufficient to find a polynomial
$\phi_n^*$ and a Caratheodory function $\~F$ which satisfy the
following properties:
\begin{itemize}
\item[1.] $\phi_n^*(z)$ has no roots in $\mathbb D$.
\item[2.] Normalization on the size and ``rotation\!"
\begin{equation}\label{norma}
\int_\mathbb T|\phi_n^*(z)|^{-2}d\theta =2\pi~,\quad\phi_n^*(0)>0\,\,.
\end{equation}
\item[3.] Large uniform norm, i.e.,
$$|\phi^*_n(1)|\sim\sqrt n\,\,.$$
\item[4.] $\~F\!\in\!C^\infty(\mathbb T)$, $\Re\~F>0$ on $\mathbb T$, and
\begin{equation}\label{norka}
\frac1{2\pi}\int_\mathbb T\Re\~F(e^{i\theta})d\theta=1\,\,.
\end{equation}
\item[5.] Moreover,
\begin{equation}\label{main}
|\phi^*_n(z)|+|\~ F(z)(\phi_n(z)-\phi_n^*(z))|<C_1(\delta)\Bigl(\Re\~F(z)\Bigr)^{1/2}
\end{equation}
uniformly in $z\!\in\!\mathbb T$.
\end{itemize}\end{lemma}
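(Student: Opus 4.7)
The plan is, given $(\phi_n^*,\~F)$ satisfying conditions 1--5, to construct a probability measure $\sigma\in S_\delta$ whose $n$-th orthonormal polynomial is the $*$-partner $\phi_n$ of $\phi_n^*$. Since $|\phi_n|=|\phi_n^*|$ on $\mathbb T$, condition 3 then gives $|\phi_n(1,\sigma)|=|\phi_n^*(1)|\sim\sqrt n$, which proves $M_{n,\delta}\gtrsim\sqrt n$. Conceptually, $\phi_n^*$ supplies the large polynomial, while $\~F$ is used to inflate the boundary density of $\sigma$ so that the Steklov condition holds; condition 5 is the sole compatibility requirement linking the two otherwise independent inputs.

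From conditions 1 and 2, Lemma \ref{vspomag} furnishes $\phi_n$ as the $n$-th orthonormal polynomial of a probability measure. I read off its Schur parameters $\gamma_0,\ldots,\gamma_{n-1}$ from \eqref{srecurs} and build the second-kind polynomials $\psi_n,\psi_n^*$ from \eqref{secon}. I then define a new Caratheodory function $F$ as a M\"obius transform of $\~F$ parametrized by $(\phi_n,\phi_n^*,\psi_n,\psi_n^*)$. The classical Geronimus choice is
\[
F(z)=\frac{\psi_n^*(z)+z\psi_n(z)f(z)}{\phi_n^*(z)-z\phi_n(z)f(z)},\qquad f(z)=\frac{\~F(z)-1}{\~F(z)+1}.
\]
Here $f$ is a Schur function (because $\~F$ is Caratheodory with $\Re\~F(0)=1$, by condition 4), so $F$ is a Caratheodory function; and the first $n$ Schur parameters of $F$ coincide with those of $\phi_n$, so that the $n$-th orthonormal polynomial of the measure $\sigma$ corresponding to $F$ is exactly $\phi_n$. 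Condition 4 also guarantees that $\~F$ is smooth on $\mathbb T$ with strictly positive real part, so $F$ is smooth on $\mathbb T$ and $\sigma$ is purely absolutely continuous.

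What remains is to show $\sigma\in S_\delta$, i.e., $\Re F(e^{i\theta})\ge\delta$ on $\mathbb T$. Using $1-|f|^2=4\Re\~F/|\~F+1|^2$, the polynomial identity $\psi_n\phi_n^*+\psi_n^*\phi_n\equiv 2z^n$ (a short induction from the recursions), and the boundary reflection $\overline{P}=z^{-n}P^*$ valid for degree-$n$ polynomials on $\mathbb T$, direct simplification collapses the boundary real part to
\[
\Re F(e^{i\theta})=\frac{4\,\Re\~F(e^{i\theta})}{|D(e^{i\theta})|^2},
\]
where $D$ is a polynomial combination of $\phi_n,\phi_n^*$ and $\~F$. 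The main obstacle is arranging the sign conventions and powers of $z$ in the M\"obius parametrization so that $|D|$ is dominated on $\mathbb T$ by a constant multiple of $|\phi_n^*|+|\~F(\phi_n-\phi_n^*)|$, i.e., precisely the left-hand side of condition 5. Once this is set up, condition 5 yields $|D|^2\lesssim C_1(\delta)^2\,\Re\~F$, whence $\Re F\ge 4/C_1(\delta)^2\ge\delta$ by taking $C_1(\delta)=2/\sqrt\delta$. The rest is bookkeeping with the standard orthogonal-polynomial identities, and the conclusion $M_{n,\delta}\gtrsim\sqrt n$ follows as sketched in the first paragraph.
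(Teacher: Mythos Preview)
Your approach is correct and is essentially the paper's argument in a different dress. The paper concatenates the Schur parameters $\gamma_0,\dots,\gamma_{n-1}$ of $\phi_n^*$ with those $\~\gamma_0,\~\gamma_1,\dots$ of the measure $\~\sigma$ with density $\Re\~F/(2\pi)$, invokes Baxter's theorem (using $\~F\in C^\infty$, $\Re\~F>0$) to justify taking limits in the transfer-matrix identity, and arrives at the explicit factorization $2\Pi=\~\Pi\bigl(\phi_n+\phi_n^*+\~F(\phi_n^*-\phi_n)\bigr)$; since $|\~\Pi|^{-2}=\Re\~F$, this is exactly your formula $\Re F=4\Re\~F/|D|^2$ with $D=\phi_n+\phi_n^*+\~F(\phi_n^*-\phi_n)$, and the triangle inequality then gives $|D|\le 2\bigl(|\phi_n^*|+|\~F(\phi_n-\phi_n^*)|\bigr)$, so $C_1(\delta)=\delta^{-1/2}$ suffices. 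Your direct M\"obius/Geronimus parametrization is a clean shortcut that bypasses the Baxter-plus-limit step; it buys brevity, while the paper's route makes the identity for $D$ fall out mechanically from the recursion.

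One small correction to your formula: with $f=(\~F-1)/(\~F+1)$ you have $f=zg$, where $g$ is the Schur function of $\~\sigma$ (recall $\~F=(1+zg)/(1-zg)$). The Geronimus parametrization is $F=(\psi_n^*+z\psi_n g)/(\phi_n^*-z\phi_n g)$, which in terms of your $f$ reads $F=(\psi_n^*+\psi_n f)/(\phi_n^*-\phi_n f)$, without the extra factor of $z$. With this fix one checks directly that $\phi_n^*-\phi_n f=\tfrac{1-f}{2}\,D$, so the boundary computation you sketch goes through verbatim.
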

\begin{proof}
By Lemma \ref{vspomag}, the first two conditions guarantee that
$\phi_n(z)$ is an orthonormal polynomial of some probability
measure. It also determines the first $n$ Schur parameters:
$\gamma_0,\ldots,\gamma_{n-1}$. The third one gives the necessary
growth. Next, let us show that the fourth and  fifth conditions are
sufficient for the existence of a measure $\sigma\!\in\!S_\delta$
for which $\phi_n$ is the $n$--th orthonormal polynomial.

By the fourth condition, $\~F$ defines the probability measure
$\~\sigma$ which is purely absolutely continuous and has positive
smooth density $\~\sigma'$ given by
\begin{equation}\label{fefl}
\~\sigma'(\theta)=\frac{\Re\~F(e^{i\theta})}{2\pi}
\,\,.
\end{equation}
Denote its Schur parameters by $\{\~\gamma_j\},\,j=0,1,\ldots$ and
the orthonormal polynomials of the first and second kind by
$\{\~\phi_j\}$ and $\{\~\psi_j\},\,j=0,1,\ldots$, respectively.
Notice that the normalization condition for $\~\sigma$ implies
$\~\phi_0\!=\!\~\psi_0\!=\!1$. By Baxter's Theorem \cite{sim1} we
have $\~\gamma_j\in\ell^1$ (in fact, the decay is much stronger but
$\ell^1$ is enough for our purposes). Then, let us consider the
probability measure $\sigma$ which has the following Schur
parameters
\[
\gamma_0,\ldots,\gamma_{n-1},\~\gamma_0,\~\gamma_1,\ldots
\]
We will show that this measure satisfies the Steklov's condition. Denote
\begin{equation}\label{otoj}
\gamma_n=\~\gamma_0,\gamma_{n+1}=\~\gamma_1,\ldots
\end{equation}
The Baxter Theorem implies that $\sigma$ is purely a.c., $\sigma'$
belongs to  Wiener's class $W(\mathbb T)$, and $\sigma'$ is positive
on $\mathbb T$. The first $n$ orthonormal polynomials corresponding
to the measure $\sigma$ will be $\{\phi_j\},\,j=0,\ldots,n-1$. Let
us compute the polynomials $\phi_j$ and $\psi_j$, orthonormal with
respect to $\sigma$, for the indexes $j>n$. Since the second kind
polynomials correspond to the Schur parameters $\{-\gamma_j\}$ (see
\eqref{secon}), the recursion can be rewritten in the following
matrix form
\begin{equation}\label{m-ca}
\left(\begin{array}{cc}
\phi_{n+m} & \psi_{n+m}\\
\phi_{n+m}^* & -\psi_{n+m}^*
\end{array}\right)=\left(\begin{array}{cc}
{\cal A}_m & {\cal B}_m\\
{\cal C}_m & {\cal D}_m
\end{array}\right)\left(\begin{array}{cc}
\phi_{n} & \psi_{n}\\
\phi_{n}^* & -\psi_{n}^*
\end{array}\right)\end{equation}
where ${\cal A}_m, {\cal B}_m, {\cal C}_m, {\cal D}_m$ satisfy
\begin{eqnarray*}\left(\begin{array}{cc}
{\cal A}_0 & {\cal B}_0\\
{\cal C}_0 & {\cal D}_0
\end{array}\right)=\left(\begin{array}{cc}
1 & 0\\
0 & 1
\end{array}\right),\hspace{6cm}\\
\left(\begin{array}{cc}
{\cal A}_m & {\cal B}_m\\
{\cal C}_m & {\cal D}_m
\end{array}\right)=\frac1{\~\rho_0\cdot\ldots\cdot\~\rho_{m-1}}\left(\begin{array}{cc}
z & -\~\gamma_{m-1}\\
-z\~\gamma_{m-1} & 1
\end{array}\right)\cdot\ldots\cdot\left(\begin{array}{cc}
z & -\~\gamma_0\\
-z\~\gamma_0 & 1
\end{array}\right)\end{eqnarray*}
and thus depend only on $\gamma_n,\ldots,\gamma_{n+m-1}$ (i.e.,
$\~\gamma_0,\ldots,\~\gamma_{m-1}$ by \eqref{otoj}.) Moreover, we
have
\[\left(\begin{array}{cc}
\~\phi_m & \~\psi_m\\
\~\phi_m^* & -\~\psi^*_m
\end{array}\right)=\left(\begin{array}{cc}
{\cal A}_m & {\cal B}_m\\
{\cal C}_m & {\cal D}_m
\end{array}\right)\left(\begin{array}{cc}
1 & 1\\
1 & -1
\end{array}\right)
\,\,.
\]
Thus, ${\cal A}_m\!=\!(\~\phi_m\,{+\,\~\psi_m)/2,~{\cal
B}_m\!=\!(\~\phi_m\,-}\,\~\psi_m)/2,~ {\cal
C}_m\!=\!(\~\phi^*_m\,{-\,\~\psi^*_m)/2,~{\cal
D}_m\!=\!(\~\phi^*_m\,+}\,\~\psi^*_m)/2$ and their substitution into
\eqref{m-ca} yields
\begin{equation}\label{intert}
2\phi_{n+m}^*=\phi_n(\~\phi_m^*-\~\psi^*_m)+\phi_n^*(\~\phi_m^*+\~\psi^*_m)=
\~\phi_m^*\left(\phi_n+\phi_n^*+\~
F_m(\phi_n^*-\phi_n)\right)\end{equation}
where
\[
\~F_m(z)=\frac{\~\psi^*_m(z)}{\~\phi^*_m(z)}\,\,.
\]
Since $\{\~\gamma_n\}\!\in\!\ell^1$ and $\{\gamma_n\}\!\in\!\ell^1$, we have (\cite{sim1}, p.~225)
\[
\~F_m\to\~F~{\rm as~}m\to\infty~{\rm and~}
\phi_n^*\to\Pi,~\~\phi_n^*\to\~\Pi~{\rm as~}n\to\infty\,\,.
\]
uniformly on $\overline{\mathbb D}$. The functions $\Pi$ and $\~\Pi$
are the Szeg\H{o} functions of $\sigma$ and $\~\sigma$,
respectively, i.e., they are the outer functions in $\mathbb D$ that
give the factorizations
\begin{equation}\label{facti}
|\Pi|^{-2}=2\pi\sigma',\quad |\~\Pi|^{-2}=2\pi\~\sigma'\,\,.
\end{equation}
In \eqref{intert}, send $m\to\infty$ to get
\begin{equation}\label{facti1}
2\Pi=\~\Pi\left(\phi_n+\phi_n^*+\~F(\phi_n^*-\phi_n)\right)\,\,.
\end{equation}
Thus, the first formula in \eqref{facti} shows that for the
sufficiently regular measures,  Steklov's condition $\sigma'\ge
\delta/(2\pi)$ is equivalent to
\begin{equation}\label{oc-1}
\left|\~\Pi\left(\phi_n+\phi_n^*+\~F(\phi_n^*-\phi_n)\right)\right|\le\frac2{\sqrt\delta},\quad
z=e^{i\theta}\in\mathbb T\,\,.\end{equation}
Since $|\phi_n|=|\phi_n^*|$ on $\mathbb T$, we have
\[
\left|\~\Pi\left(\phi_n+\phi_n^*+\~F(\phi_n^*-\phi_n)\right)\right|\le2|\~\Pi|
\left(|\phi^*_n|+|\~F(\phi^*_n-\phi_n)|\right)<2C_1(\delta)|\~\Pi|\left(\Re\~F\right)^{1/2}=2C_1(\delta)
\]
due to \eqref{main}, \eqref{fefl}, and the second formula in
\eqref{facti}. Thus, to guarantee \eqref{oc-1}, we only need to take
$C_1(\delta)=\delta^{-1/2}$ in \eqref{main}. In this section, we
assume $\delta$ to be fixed so the exact formulas for $C(\delta)$
and $C_1(\delta)$ will not be needed.

\end{proof}

\begin{proof}{\it (of Theorem \ref{T3}).} The proof will be based on the Decoupling Lemma and will contain two
parts. In the first one, we will make the choice for $\phi^*_n$,
$\widetilde F$ and study their basic properties to check conditions
$(1)$,$(3)$-$(5)$ of the Decoupling Lemma. In the second part, we
will verify the normalization condition, i.e., condition $(2)$.

\subsection{ The choice of parameters}

In what follows, we take $\e_n=n^{-1}$.\bigskip

{\bf 1.  The choice of $\~F$.} Consider two parameters:
$\alpha\in(1/2,1)$ and $\rho\in(0,\rho_0)$ where $\rho_0$ is
sufficiently small. Let us emphasize that these parameters are fixed
and will not be changed in the estimates below, however many
constants in these inequalities will actually depend on them. We do
not trace this dependence here.

 Take
\begin{equation}\label{eff}
\~F(z)=\~C_n\left(\rho(1+\e_n-z)^{-1}+(1+\e_n-z)^{-\alpha}\right),
\end{equation}
where the positive normalization constant $\~C_n$ will be chosen
later. We have two terms inside the brackets. The first one gives
the right growth at point $z=1$: $(1+\e_n-1)^{-1}=n$ and this choice
is motivated by conditions $(3)$ and $(5)$ of the Decoupling Lemma.
The role of the second term will be explained later.

 We will need more information on $\~F$. Clearly
$\~F$ is smooth and has a positive real part in $\overline{\mathbb
D}$. Notice that for $z=e^{i\theta}\!\in\!\mathbb T$ and
$|\theta|\ll 1$, we have
\begin{align*}
1\!+\!\e_n\!-\!z&=\e_n+\tfrac{\theta^2}2-i\theta+O(|\theta|^3)~,\quad|1\!+\!\e_n\!-\!z|^2=
\big(\e_n+\tfrac{\theta^2}2+O(|\theta|^3)\big)^2+\big(-\!\theta+O(|\theta|^3)\big)^2=\\&=\e_n^2+\theta^2+\e_n\theta^2
+O(|\theta|^3)\end{align*} Since
$w^{-1}=\smash{\frac{\textstyle\overline
w}{\textstyle|w|^{2\vphantom]}}}$, we obtain
\begin{equation}\label{rep-in-1}
(1\!+\!\e_n\!-\!z)^{-1}=\frac{\e_n+\frac{\theta^2}2+i\theta+O(|\theta|^3)}{\e_n^2+\theta^2+\e_n\theta^2+O(|\theta|^3)}
=\frac{\e_n+i\theta}{\e_n^2+\theta^2}+O\Big(\frac{\theta^2}{\e_n^2+\theta^2}\Big)\,\,.
\end{equation}
The last equality can be verified directly by subtraction. So,
\begin{equation}\label{Caratheodory}
\~F(e^{i\theta})=\~C_n\left(\frac{\rho\e_n}{\e_n^2+\theta^2}+i\frac{\rho\theta}{\e_n^2+\theta^2}+
(1+\e_n-z)^{-\alpha}+O\left(\frac{\rho\theta^2}{\e_n^2+\theta^2}\right)\right)\,\,.
\end{equation}
We have
\begin{equation}\label{j1}
(1+\e_n-e^{i\theta})^{-\alpha}=((1+\e_n-\cos\theta)^2+\sin^2\theta)^{-\alpha/2}\exp(-i\alpha\Gamma_n(\theta))\,\,.
\end{equation}
\begin{equation}\label{j2}
\Gamma_n(\theta)=-\arctan\left( \frac{\sin\theta}{1+\e_n-\cos\theta}\right)\in(-\pi/2,\pi/2)\,\,.
\end{equation}
The following bound is true
\[
\left\|\frac{\rho\e_n}{\e_n^2+\theta^2}+|(1+\e_n-e^{i\theta})^{-\alpha}|+
O\left(\frac{\rho\theta^2}{\e_n^2+\theta^2}\right)\right\|_{L^1[-\pi,\pi]}\sim
1
\]uniformly in $n$.
Then, for every fixed $\upsilon>0$, we have
\begin{equation}\label{ochen-prosto}
\rho(1+\e_n-z)^{-1}+(1+\e_n-z)^{-\alpha}\to
\rho(1-z)^{-1}+(1-z)^{-\alpha}, \quad n\to \infty
\end{equation}
uniformly in $\{z=e^{i\theta},\, |\theta|>\upsilon\}$. Since
\[
\int_{|\theta|>\upsilon}\Re
\left(\rho(1-e^{i\theta})^{-1}+(1-e^{i\theta})^{-\alpha}\right)d\theta\sim
1
\]
we can choose $\~C_n$ to guarantee \eqref{norka} and then $\~C_n\sim
1$ uniformly in $n$.

Consider the formulas (\ref{rep-in-1}) and (\ref{Caratheodory}).
They yield
\begin{equation}\label{tit1}
|\~F|\sim\e_n^{-1}=n \quad {\rm for }\,\,  |\theta|<\e_n
\end{equation}
and
\begin{equation}\label{tit2}
|\~F|\sim|\theta|^{-1}  \quad {\rm for }\,\, |\theta|>\e_n\,\,.
\end{equation}
Indeed, in the last inequality, the upper bound
\[
|\~F|\lesssim |\theta|^{-1}
\]
is immediate. For the lower bound,
\begin{eqnarray*}
|\~F|\ge|\Im \~F|=\left|
\frac{\rho\sin\theta}{(1+\e_n-\cos\theta)^2+\sin^2\theta}
\hspace{5cm} \right.
\\
\left.
+\left((1+\e_n-\cos\theta)^2+\sin^2\theta\right)^{-\alpha/2}\sin
\left(\alpha\arctan
\frac{\sin\theta}{1+\e_n-\cos\theta}\right)\right|\,\,.
\end{eqnarray*}
These terms have the same signs, so
\[
|\Im \~F|\ge\left|
\frac{\rho\sin\theta}{(1+\e_n-\cos\theta)^2+\sin^2\theta}
\right|=\left| \frac{\rho\sin\theta}{2(1+\e_n)(1-\cos\theta)+\e_n^2}
\right|\gtrsim |\theta|^{-1}, \quad \e_n<|\theta|<\pi/2\,\,.
\]
For $\theta: |\theta|>\pi/2$, the estimate
\[
|\~F|\sim |\theta|^{-1}
\]
is a trivial corollary of (\ref{ochen-prosto}).
\bigskip

{\bf 2. The choice of $\phi_n^*$.} Let $\phi_n^*$ be chosen as
follows
\begin{equation}\label{vybor12}
\phi_n^*(z)=C_n f_n(z), \, f_n(z)=P_m(z)+Q_m(z)+Q_m^*(z)
\end{equation}
where $P_m$ and $Q_m$ are certain polynomials of degree
\begin{equation}\label{propa}
m=[\delta_1 n]
\end{equation}
where $\delta_1$ is small and will be specified later. Notice here
that $Q_m^*$ is defined by applying the $n$--th order star
operation. The constant $C_n$ will be chosen in such a way that
\[
\int_{-\pi}^\pi |\phi^*_n|^{-2}d\theta=2\pi
\]
(i.e., \eqref{norma} is satisfied). To prove the Theorem, we only
need to show that
\begin{equation}\label{si-n}
C_n=\left(\int_{-\pi}^\pi |f_n|^{-2}d\theta\right)^{1/2}\sim1
\end{equation}
uniformly in $n$ and that $f_n$ satisfies the other conditions of
the Decoupling Lemma.

The choice of $\phi_n^*$ is motivated by the following observation.
The estimate \eqref{main} requires
\[
|\~
F(z)(\phi_n(z)-\phi_n^*(z))|<C_1(\delta)\Bigl(\Re\~F(z)\Bigr)^{1/2},
\quad z\in \mathbb{T}\,.
\]
Since $|F(z)|$ is much larger than $\Re\~F(z)$ around $z=1$, the
point of growth, the factor $\phi_n-\phi_n^*$ should provide some
cancelation. The sum of the second and the third terms in
\eqref{vybor12}, the polynomial $Q_m+Q_m^*$ has degree $n$ and is
symmetric so it drops out in $\phi_n-\phi_n^*$. However, it has
zeroes on $\mathbb{T}$ due to Lemma \ref{noli} and thus can not be a
good choice for $\phi_n^*$ due to violation of conditions $(1)$ and
$(2)$ in Decoupling Lemma. $P_m$, the first term in \eqref{vybor12},
will be be chosen to achieve a certain balance. It will be small
around $z=1$ and it will push the zeroes of $Q_m+Q_m^*$ away from
$\overline{\mathbb{D}}$ to guarantee \eqref{norma}.

\bigskip

Consider the Fejer kernel
\begin{equation}\label{feya}
\cal F_m(\theta)=\frac1m\frac{\sin^2(m\theta/2)}{\sin^2(\theta/2)}=
\frac1m\frac{1-\cos(m\theta)}{1-\cos(\theta)},\quad\cal F_m(0)=m
\end{equation}
and the Taylor approximation to the function $(1-z)^{-\alpha}$,
i.e.,
\[
R_{(k,\alpha)}(z)=1+\sum^k_{j=1}d_jz^j
\]
(see Appendixes for the detailed discussion). We define $Q_m$ as an
analytic polynomial without zeroes in $\mathbb D$ which gives
Fejer-Riesz factorization
\begin{equation}\label{fact}
|Q_m(z)|^2=\cal G_m(\theta)+|R_{(m,\alpha/2)}(e^{i\theta})|^2\,\,.
\end{equation}
\begin{equation}\label{sdvig}
\cal G_m(\theta)=\cal F_m(\theta)+\frac12\cal F_m\left(\theta-\frac\pi
m\right)+\frac12\cal F_m\left(\theta+\frac\pi m\right)\,\,.
\end{equation}
Clearly, the right hand side of \eqref{fact} is a positive
trigonometric polynomial of degree $m$ so this factorization is
possible and $Q_m$ is unique up to a unimodular factor. We choose
this factor in such a way that $Q_m(0)>0$. Since $Q_m$ is an outer
function, we have the following canonical representation (see
\cite{duren}, page 24)
\begin{equation}\label{mult-mult}
Q_m(z)=\exp\left(\frac1{2\pi}\int_{-\pi}^\pi C(z,e^{i\theta})\ln|Q_m(e^{i\theta})|d\theta\right)~,\quad|z|\!<\!1\,.
\end{equation}

Notice that $Q_m^*$ is a polynomial of degree $n$ with positive
leading coefficient. Since $|Q_m(e^{i\theta})|$ is even in $\theta$,
this representation shows that ${\cal H}(z)=\ln Q_m(z)$ is analytic
in $\mathbb D$ and has real Taylor coefficients (indeed,
$\overline{{\cal H}(z)}=\cal{H}(\overline{z})$). That, on the other
hand, implies that $Q_m(z)=e^{\cal{H}(z)}$ has real coefficients as
well.

For $P_m$, we take
\begin{equation}\label{ppp}
P_m(z)=Q_m(z)(1-z)(1-0.1R_{(m,-(1-\alpha))}(z))
\end{equation}
and $\deg P_m=2m+1<n$ by the choice of small $\delta_1$. Consequently, $\deg \phi^*_n=n$.\bigskip

Now that we have chosen $\~F$ and $\phi_n^*$, it is left to show
that they satisfy the conditions of the Decoupling Lemma. The second
term in \eqref{eff} and the structure of \eqref{ppp} will become
important in what follows.

{\bf 1. $f_n$ has no zeroes in $\overline{\mathbb D}$}.
For $f_n$, we can write
\begin{equation}\label{efn}
f_n=Q_m(z)\left((1-z)(1-0.1R_{(m,-(1-\alpha))}(z))+1+z^ne^{-2i\phi}\right),\quad
z\!\in\!\mathbb T\end{equation}
where
\[
e^{-2i\phi(\theta)}=\frac{\overline{Q_m(e^{i\theta})}}{Q_m(e^{i\theta})}
\]
so $\phi$ is an argument of $Q_m$. The polynomial $Q_m$ has no zeroes in $\mathbb D$ and
\begin{equation}\label{dunduk}
(1-z)(1-0.1R_{(m,-(1-\alpha))}(z))+1+\frac{Q_m^*}{Q_m}
\end{equation}
is analytic in $\mathbb D$ and has positive real part. Indeed,
\[
\Re\left(1+\frac{Q_m^*}{Q_m}\right)\ge 0,\quad z\!\in\!\mathbb T
\]
since $|Q_m|=|Q^*_m|$.

Since $Q_m$ has real coefficients, $Q_m(1)$ is real. Furthermore, since $Q_m(0)\!>\!0$, $Q_m$ is real at the
real line and $Q_m\!\ne\!0$ in $\mathbb D$, then $Q_m(1)\!>\!0$. So $\phi(0)=0$ and
\[
\Re\left(1+\frac{Q_m^*}{Q_m}\right)=2,\quad z=1\,\,.
\]
For the first term in \eqref{dunduk}, we have
\begin{equation}\label{parts}
\Re\big[(1-z)\big(1-0.1R_{(m,-(1-\alpha))}(z)\big)\big]=(1-\cos\theta)(1-0.1X)-0.1Y\sin\theta
\end{equation}
where
\[
X=\Re R_{(m,-(1-\alpha))},\quad Y=\Im R_{(m,-(1-\alpha))}\,\,.
\]
Notice that\begin{equation}\label{_51}
|R_{(m,-(1-\alpha))}|<2\quad\text{for }z\!\in\!\mathbb
D\,\,.\end{equation}So $|X|<2$ in $\mathbb D$ as well. The function
$Y$ is odd in $\theta$ and $Y(\theta)<0$ for $\theta\in (0,\pi)$ (as
follows from the Lemma \ref{poly1} in Appendix A for small $\theta$;
 see \cite{askey}, Theorem 7.3.5 for the general case). Thus, the function $f_n(z)$ has
a positive real part on $\mathbb T$ and, in particular, has no
zeroes in $\mathbb D$. We conclude then that $\phi_n^*$ has no
zeroes in $\mathbb D$.\smallskip

{\bf 2. The growth at $z=1$.}  The formula \eqref{efn} implies
\[
f_n(1)=2Q_m(1)\,\,.
\]
Then, \eqref{feya} and \eqref{fact} yield
\[
|f_n(1)|\gtrsim\sqrt m\sim\sqrt n
\]
due to \eqref{propa}.\bigskip

{\bf 3. Steklov's condition.}
We need to check \eqref{main} with $\phi_n$ replaced by $f_n$. From \eqref{fact} and \eqref{efn}, we get
\[
|f_n|^2\lesssim|Q_m|^2=\cal G_m(\theta)+|R_{(m,\alpha/2)}(e^{i\theta})|^2\,\,.
\]
Lemma \ref{poly2} implies
\[
|R_{(m,\alpha/2)}(e^{i\theta})|^2\lesssim(\e_m+|\theta|)^{-\alpha}\lesssim(\e_n+|\theta|)^{-\alpha}\,\,.
\]
The exact form of the Fejer's kernel \eqref{feya} gives
\[
\cal G_m(\theta)\lesssim\frac m{m^2\theta^2+1}\lesssim\frac n{n^2\theta^2+1}=\frac{\e_n}{\e_n^2+\theta^2}\,\,.
\]
We have
\begin{eqnarray*}
\Re \~F=
\frac{\rho(1+\e_n-\cos\theta)}{(1+\e_n-\cos\theta)^2+\sin^2\theta}
\hspace{5cm}
\\
+\left((1+\e_n-\cos\theta)^2+\sin^2\theta\right)^{-\alpha/2}\cos
\left(\alpha\arctan \frac{\sin\theta}{1+\e_n-\cos\theta}\right)\,\,.
\end{eqnarray*}
Since $\alpha<1$ and $|\arctan (\cdot)|<\pi/2$, we get
\[
\Re \~F\sim \frac{\rho(\e_n+\theta^2)}{\e_n^2+\theta^2}+
\frac{1}{(\e_n^2+\theta^2)^{\alpha/2}}\,\,.
\]
The  estimate
\[
\frac{\theta^2}{\e_n^2+\theta^2}\lesssim
\frac{1}{(\e_n^2+\theta^2)^{\alpha/2}}\,,
\]
uniform in $\theta$, implies
\[
\Re\~F\sim\frac{\e_n}{\e_n^2+\theta^2}+(\e_n^2+\theta^2)^{-\alpha/2}\,\,.
\]Therefore,
\begin{equation}\label{vtoro}
|f_n|\lesssim|Q_m|\lesssim(\Re\~F)^{1/2}\,\,.
\end{equation}
Then, for the second term in \eqref{main}, we get
\[
|\~F(f_n-f_n^*)|^2=|\~F(P_m-P_m^*)|^2\lesssim|\~F(1-z)|^2|Q_m|^2\,\,.
\]
The uniform bounds
\[
|\~F(1-z)|\lesssim1,~|Q_m|^2\lesssim\Re\~F
\]
together with \eqref{vtoro}, imply \eqref{main} with $\phi_n$ replaced by $f_n$.\bigskip

\subsection{Normalization: checking condition $(2)$ of the Decoupling Lemma} We only need to check now that
\[
\int_{\mathbb T}\frac1{|f_n|^2}\,d\theta\sim 1
\]
(see \eqref{si-n}). It is sufficient to consider $\theta\!\in\![0,\pi]$
 as $|f_n(e^{i\theta})|$ is even.

Our first goal is to obtain a convenient lower bound on $|f_n|^2$.
 Notice that \eqref{fact} yields
\[
|Q_m(e^{i\theta})|^2\ge |R_{(m,\alpha/2)}(e^{i\theta})|^2
\]
and the Lemma \ref{poly2} from Appendix A gives (we should use
notation $\beta\!=\!1\!-\!\alpha>0$ here)
\[
|f_n|^2\gtrsim(m^{-1}+|\theta|)^{-\alpha}\left|(1-z)(1-0.1R_{(m,-\beta)}(z))+1+z^ne^{2i\phi}\right|^2\,\,.
\]
Then, the representation \eqref{parts} leads to
\begin{equation}\label{xyz}
\left|(1-z)(1-0.1R_{(m,-\beta)}(z))+1+z^ne^{2i\phi}\right|^2=
\end{equation}
\[
\Bigl((1-\cos\theta)(1-0.1X)-0.1Y\sin\theta +1+\cos(n\theta-2\phi)\Bigr)^2+
\]
\[
\Bigl(-0.1Y(1-\cos\theta)-(1-0.1X)\sin\theta+\sin(n\theta-2\phi)\Bigr)^2\,\,.
\]
For the first term, we have
\begin{equation}\label{interm-s}
(1-\cos\theta)(1-0.1X)-0.1Y\sin\theta +1+\cos(n\theta-2\phi)\ge -0.1Y\sin\theta\gtrsim \theta^{2-\alpha}
\end{equation}
where the last inequality follows from Lemma \ref{poly1}. Thus,
\begin{equation}\label{inv}
|f_n|^2\gtrsim(m^{-1}+|\theta|)^{-\alpha}\left[
(\theta^{2-\alpha})^2 + \bigl(\Psi(\theta)+\sin(n\theta-2\phi)\bigr)^2
\right]\end{equation}
where
\[
\Psi=
\Im\big((1-z)(1-0.1R_{(m,-\beta)}(z))\big)=-0.1Y(1-\cos\theta)-(1-0.1X)\sin\theta\,\,.
\]
In what follows, we will control $\Psi$ and $\phi$ to analyze
$\Psi(\theta)+\sin(n\theta-2\phi)$ in \eqref{inv}. We will locate
the zeroes $\{\theta_j\}$ of this highly oscillatory function and
will show that away from these points the normalization condition is
easily satisfied. More delicate analysis will be needed to integrate
$|f_n|^{-2}$ over small neighborhoods of $\{\theta_j\}$.

To bound $|\Psi'(\theta)|$, we use  Lemma \ref{der-der} and
\eqref{_51}. Indeed,
\begin{gather*}
|\Psi'(\theta)|=\left|\Im\frac{d~}{d\theta}\Bigl[(1-z)\,(1-0.1R_{m,-\beta}(z))\Bigr]\right|\le\\
\le\left|(1-0.1R_{m,-\beta}(e^{i\theta}))\right|
+0.1\left|(1-e^{i\theta})\frac{d~}{d\theta}
R_{m,-\beta}(e^{i\theta})\right|\le\\\le(1+0.1\cdot2)+0.1\cdot2|\theta|\cdot
O\left(\max\Big(\frac1n,|\theta|\Big)^{\beta-1}\right)=O(1)\;.\end{gather*}
Then,
\begin{equation}\label{len-1}
|\Psi'(\theta)|\lesssim1
\end{equation}
 uniformly in $n$. This estimate and $\Psi(0)=0$
imply
\begin{equation}\label{aPs}|\Psi(\theta)|\lesssim|\theta|\end{equation}
by integration.

For the phase $\phi$, we have $\phi(0)=0$  and
\[
|\phi'(\theta)|\lesssim m
\]
where the last inequality is proved in Appendix B. Since we have the
derivative of $\phi$ under control,
\begin{equation}\label{nOm}
(n\theta-2\phi(\theta))'=n+O(\delta_1n)\,\,.
\end{equation}
By making $\delta_1$ small, we can make sure that the function $n\theta-2\phi(\theta)$ is
 monotonically increasing and
\begin{equation}\label{n2}
n/2<(n\theta-2\phi(\theta))'<2n\,\,.
\end{equation}

To study the zeroes $\{\theta_j\}$, we first introduce auxiliary
points $\{\widehat \theta_j\}$. The monotonicity of
$n\theta-2\phi(\theta)$ allows us to uniquely define $\{\widehat
\theta_j\}$ as solutions to the equation:
\begin{equation}\label{tthat}
~n\widehat\theta_j-2\phi(\widehat\theta_j)=\pi(j-\tfrac12), \,
j=0,\ldots, [cn]\,.
\end{equation}
Then, $\sin\big(n\widehat\theta_j-2\phi(\widehat\theta_j)\big)=(-1)^{j-1}$. On the other hand, from \eqref{n2}, one has
\begin{equation}\label{porovnu}
\frac{(2j\!-\!1)\pi}{4n}\!<\!\widehat\theta_j\!<\!\frac{2\pi
j}n\;,\end{equation} By  the estimate \eqref{aPs}, we can choose
some small positive constant $c\!>\!0$ so that for
$j=0,1,\cdots,[cn]$ the expression
$\Psi(\theta)+\sin(n\theta-2\phi(\theta))$ changes the sign on
$[\widehat\theta_j,\widehat\theta_{j+1}]$. Indeed,
$$
(-1)^j\!\!\left[\Psi(\widehat\theta_j)+\sin\big(n\widehat\theta_j\!-\!2\phi(\widehat\theta_j)\big)\right]\!=
(-1)^j\big[\Psi(\widehat\theta_j)\!+\!(-1)^{j-1}\big]\!<
\big|\Psi(\widehat\theta_j)\big|\!-\!1=O\Big(\frac{2\pi j}n\Big)\!-\!1<0\;;
$$$$
(-1)^j\!\!\left[\Psi(\widehat\theta_{j+1})+\sin\big(n\widehat\theta_{j+1}\!-\!2\phi(\widehat\theta_{j+1})
\big)\right]\!>\!1\!-\!\big|\Psi(\widehat\theta_{j+1})\big|=1\!-\!O\Big(\frac{2\pi j}n\Big)>0\;.
$$
Moreover, we require that $c$ is chosen such that
\[
|\Psi(\widehat\theta_j)|<\frac1{\sqrt2}, \quad j=0,1,\ldots, [cn]
\] and that $\upsilon$,
defined as
\[
\upsilon=\widehat\theta_{[cn]},
\]
is smaller than the parameter $\upsilon$ from the Lemmas
\ref{poly2}, \ref{derider}, \ref{poly1} in the Appendix A. Now, let
us show that there is the unique point $\theta_j$ such that
\begin{equation}\label{tt}
\theta_j\!\in\![\widehat\theta_j,\widehat\theta_{j+1}]:~\Psi(\theta_j)+
\sin\big(n\theta_j-2\phi(\theta_j)\big)=0\,\,.
\end{equation}
The existence of such $\theta_j$ is a simple corollary of continuity
and  sign change.

Note, that the function $~\Xi(\theta)\!=n\theta-2\phi(\theta)-\pi j~$, restricted to
the segment $[\widehat\theta_j,\widehat\theta_{j+1}]$, takes values
from $[-\frac\pi2,\frac\pi2]\vphantom{\sum\limits^-_-}$. So, for
each $\theta_j$ that satisfies \eqref{tt}, we have
$~\sin\big(\Xi(\theta_j)\big)\!=\!(-1)^{j-1}\Psi(\theta_j)~$ and
\begin{equation}\label{ttj}
n\theta_j-2\phi(\theta_j)+(-1)^j\arcsin\big(\Psi(\theta_j)\big)=\pi j\;.
\end{equation}
If, for fixed $j$, there are several solutions $\theta_j$ to
\eqref{tt}, then the derivative of the function
\[
n\theta-2\phi(\theta)+(-1)^j\arcsin\big(\Psi(\theta)\big)
\]
 in the left hand
side of \eqref{ttj} is non-positive at at least one of these
$\theta_j$. However the lower estimate on the derivative  reads
$$
n-2\phi'(\theta_j)+\frac{(-1)^j\Psi'(\theta_j)}{\sqrt{1-\Psi^2(\theta_j)}}>
n-O(m)-\frac{O(1)}{\sqrt{1-\frac12}}>\frac n2\;
$$
which shows that $\theta_j$ is unique.

Since $|\Psi(\widehat\theta_j)|<\frac1{\sqrt2}$, we have
$\arcsin|\Psi(\widehat\theta_j)|<\frac\pi4$. Now, from \eqref{nOm},
\eqref{tthat} and \eqref{ttj}, one gets
$$
\theta_j\!-\!\widehat\theta_j>\frac1{2n}\Big(\frac\pi2\!-\!\frac\pi4\Big)~,~
\widehat\theta_{j+1}\!-\!\theta_j>\frac1{2n}\Big(\frac\pi2\!-\!\frac\pi4\Big)\text{~,~i.e.,}\quad
I_j\!=\Big[\theta_j\!-\!\frac{0.01}n,\theta_j\!+\!\frac{0.01}n\Big]
\subset\big[\widehat\theta_j,\widehat\theta_{j+1}\big]\;.
$$
If $\theta\!\in\![\widehat\theta_j,\widehat\theta_{j+1}]\backslash
I_j$, then
\begin{equation}\label{vne}
|(n\theta\!-\!2\phi(\theta))-(n\theta_j\!-\!2\phi(\theta_j))|\!>\!\frac
n2\cdot\frac{0.01}n>\frac1{200}\,\,.
\end{equation}
The estimate for the integral over $I_0\!=[0,0.01n^{-1}]$ is easy
since on that arc we have  $|Q_m|\sim m$ (by \eqref{fact})  and
$$|f_n|^2\gtrsim m\big(1\!+\!\cos(n\theta\!-\!2\phi(\theta))\big)^2$$
by \eqref{efn}, \eqref{interm-s}. The bound on the derivative of
$\phi$ implies that $|f_n|\sim\sqrt m$ on $I_0$ so
\[
\int_{I_0}|f_n|^{-2}d\theta\sim n^{-2}\,\,.
\]
Let $\theta\,{\in}\,I_j$ and assume that some $\xi$ is located
between $\theta_j$ and $\theta$. From the definition of $I_j$, we
get $|\xi\,{-}\,\theta_j|\,{<}\,\frac1{100n}$. Therefore,
\begin{gather*}
|n\xi\!-\!2\phi(\xi)\!-\!\pi j|\le|n\theta_j\!-\!2\phi(\theta_j)\!-\!\pi j|+
|(n\xi\!-\!2\phi(\xi))-(n\theta_j\!-\!2\phi(\theta_j))|\le\\\le
|\arcsin\Psi(\theta_j)|+2n|\xi\!-\!\theta_j|\le\frac\pi4+\frac2{100}\;;
\end{gather*}
and
\begin{equation}
|\cos(n\xi-2\phi(\xi))|\ge\cos\Big(\frac\pi4+\frac2{100}\Big)>\frac12\;.
\end{equation}
So, for $\theta\!\in\!I_j,~j\!\ge\!1$, we have
\[
|\Psi(\theta)+\sin(n\theta-2\phi(\theta))|=\Bigl|\int_{\theta_j}^\theta\Bigl(\Psi'(\xi)+\cos(n\xi-2\phi(\xi))
(n-2\phi'(\xi)\Bigr)d\xi\Bigr|\sim n|\theta-\theta_j|.
\]
For $\theta\!\in\![\widehat\theta_j,\widehat\theta_{j+1}]\backslash
I_j$, $j\!\ge\!1$, one gets
\begin{gather*}
|\sin(n\theta\!-\!2\phi(\theta))\!+\!\Psi(\theta)|\ge
|\sin(n\theta\!-\!2\phi(\theta))-
\sin(n\theta_j\!-\!2\phi(\theta_j))|
-|\Psi(\theta)\!-\!\Psi(\theta_j)|
\end{gather*}
by the triangle inequality.
  Then, for the last term, we apply (\ref{len-1}) to get
\[
|\Psi(\theta)\!-\!\Psi(\theta_j)|\lesssim |\theta-\theta_j|\le
|\widehat\theta_{j+1}-\widehat\theta_j|=O(n^{-1})\,\,.
\]
For the first one,
\[
\sin x_1-\sin
x_2=2\sin\left(\frac{x_1-x_2}{2}\right)\cos\left(\frac{x_1+x_2}{2}\right)
\]
where $x_1=n\theta-2\phi(\theta)$ and
$x_2=n\theta_j-2\phi(\theta_j)$. Next, we use (\ref{vne}) to write
\[
\frac{\pi}{2}\ge|x_1-x_2|\ge\frac{1}{200}\,\,.
\]
Consider $(x_1+x_2)/2$. For $x_2$, we have $x_2=\pi j-(-1)^j\arcsin
\Psi(\theta_j)$ with $|\arcsin\Psi(\theta_j)|\le\pi/4$. Then, for
every $x_1\in [\pi j-\pi/2,\pi j+\pi/2]$, we get
\[
\left|\cos\left(\frac{x_1+x_2}{2}\right)\right|>\cos(3\pi/8)\,\,.
\]
Therefore,
$$|\Psi(\theta)+\sin\big(n\theta\!-\!2\phi(\theta)\big)|\sim1$$
outside $\cup_j I_j$.
\bigskip

Now, let us obtain the estimates outside the small fixed arc
$\{\theta: |\theta|>\upsilon\}$. The bound (\ref{efn}) implies
\[
|Q_m|^2\Bigl[\Re
\Bigl((1-z)(1-0.1R_{(m,-(1-\alpha))}(z))\Bigr)\Bigr]^2\le
|f_n|^2\le|Q_m|^2(2+|(1-z)(1-0.1R_{(m,-(1-\alpha))}(z))|)^2\,\,.
\]
We have the uniform convergence
\[
R_{(m,-(1-\alpha))}(z)\to (1-z)^{1-\alpha}, \quad m\to \infty,\,
z\in \mathbb{T}
\]
and
\[
|Q_m(z)|\to |1-z|^{-\alpha}, \quad m\to \infty, \quad
z=e^{i\theta},\, |\theta|>\upsilon.
\]
The direct calculation shows (see, e.g., (\ref{interm-s})) that
\[
\Re \Bigl((1-z)(1-0.1R_{(m,-(1-\alpha))}(z))\Bigr)\sim 1, \quad
|\theta|>\upsilon\,\,.
\]
Consequently,
\begin{equation}\label{ravno-1}
\int_{|\theta|>\upsilon} |f_n|^{-2}d\theta\sim C(\upsilon,\alpha)\,\,.
\end{equation}
 From \eqref{inv}, we have
\begin{gather*}
\|f_n^{-1}\|_2^2\lesssim\int\limits_{[0,\upsilon]\cap(\cup_j I_j)^c}
\theta^\alpha d\theta+\sum_{j=1}^{[cn]}~\theta_j^\alpha\hskip-3ex\int\limits^{0.01n^{-1}}_{-0.01n^{-1}}
\frac{d\theta}{(\theta_j^{2-\alpha})^2+n^2\theta^2}+n^{-2}+
\int_{|\theta|>\upsilon} |f_n|^{-2}d\theta\lesssim\\
\lesssim1+\frac1n\sum_{j=1}^{[cn]}\theta_j^{-2+2\alpha}\sim
1+\int_0^1\theta^{2\alpha-2}d\theta\lesssim1
\end{gather*}
where we used \eqref{porovnu}, \eqref{tt}, and
$\alpha\!\in\!(1/2,1)$. Together with (\ref{ravno-1}), that implies
\[
\int_{\mathbb{T}} |f_n|^{-2}d\theta\sim 1
\]
and the proof of Theorem \ref{T3} is finished.
\end{proof}

{\bf Remark.} It is immediate from the proof that the constructed
polynomial
 satisfies the following bound:
\begin{equation}\label{pol-sv}
|\phi_n(e^{i\theta},\sigma)|^2\le
C(\alpha)\left(\frac{n}{1+(n\theta)^2}+\frac{1}{|\theta|^\alpha}\right),
\quad \alpha\in (0.5,1)\,.
\end{equation}

\vspace{0.5cm}

\section{Measure of orthogonality}

 Our method allows one to compute a measure of orthogonality
$\sigma$ for which the orthonormal polynomial has the required size
and it is interesting to compare it to the results on the maximizers
we obtained before. The calculations given below will show that
$\sigma$ is purely absolutely continuous. Its density can be
represented as a sum of background ${\cal B}(\theta)$, $0<C_1\le
{\cal B}(\theta)\le C_2$, and a combination of ``peaks" positioned
at $\widetilde \theta_j$ to be defined later. Qualitatively, each
peak resembles the mollification of the point mass by the Poisson
kernel. Our analysis can establish the parameters of mollification
and a ``mass" assigned to each peak.

The formulas \eqref{facti} and \eqref{facti1} yield
\[
\sigma'=\frac{4\~\sigma'}{|\phi_n+\phi_n^*+\~F(\phi_n^*-\phi_n)|^2}=\frac{2\Re\~F}
{\pi|\phi_n+\phi_n^*+\~F(\phi_n^*-\phi_n)|^2}\,\,.
\]
This expression is explicit as we know the formulas for all functions involved. We have
\begin{equation}\label{chto}
\sigma'^{-1}=\frac{C_n|Q_m|^2}{\Re\~F}\cdot\left|
2(1+e^{i(n\theta-2\phi)})+H_n(e^{i\theta})+e^{i(n\theta-2\phi)}\overline{H_n(e^{i\theta})}+
\~F\left(H_n(e^{i\theta})-e^{i(n\theta-2\phi)}\overline{H_n(e^{i\theta})}\right)\right|^2
\end{equation}
where
\begin{equation}\label{eto-ash}
H_n(z)=(1-z)(1-0.1R_{(m,-(1-\alpha))}(z)),\quad C_n\sim1
\end{equation}
as follows from \eqref{si-n}, \eqref{ppp}.

Consider the first factor. We can apply \eqref{tit1}, \eqref{tit2},
and Lemma \ref{poly2} to get
\[
\frac{|Q_m|^2}{\Re\~F}\sim1\,\,.
\]
Recall that $\~F$ is given by
\begin{equation}\label{small-w}
\~F(z)=\~C_n(\rho(1+\e_n-z)^{-1}+(1+\e_n-z)^{-\alpha}),\,\~C_n=(\rho/(1+\e_n)+(1+\e_n)^{-\alpha})^{-1}
\end{equation}
where the last formula for $\~C_n$ comes from the normalization
\eqref{norka} and
\[
\frac{1}{2\pi}\int_{\mathbb{T}} \Re \widetilde
F(e^{i\theta})d\theta=\Re \widetilde F(0)
\]
(the Mean Value Formula for a harmonic function continuous in
$\overline{\mathbb{D}}$).

Substitution into the second factor in \eqref{chto} gives
\begin{equation}\label{nasha-r}
(\sigma')^{-1}\sim\left|(2+\overline
H_n(1-\~F))\left(e^{i(n\theta-2\phi)}+\frac{2+H_n(1+\~F)}{2+\overline
H_n(1-\~F)}\right)\right|^2\,\,.
\end{equation}
For $z=e^{i\theta}$ and small positive $\theta$ (the negative values
can be handled similarly), we have
\[
H_n(e^{i\theta})=-i\theta+0.1\theta i\;\!R_{(m,-(1-\alpha))}(e^{i\theta})+O(|\theta|^2)
\]
and Lemma \ref{poly1} can be used  for $R_{(m,-(1-\alpha))}$. Next,
consider $\~F$. It can be written as
\[
\~F(e^{i\theta})=\~C_n\left(\frac{\rho}{\e_n-i\theta}+\frac1{(\e_n-i\theta)^\alpha}\right)+O(1)
\]
Therefore, for the first factor in \eqref{nasha-r}, we have
\begin{equation}\label{mesto-odin}
|2+\overline H_n(1-\~F)|\sim1
\end{equation}
when $\rho\!\in\!(0,\rho_0(\alpha))$ and $\rho_0(\alpha)$ is small.

Consider
\[
J=\frac{2+H_n(1+\~F)}{2+\overline H_n(1-\~F)}\,\,.
\]
Notice first that $|J|\!>\!1$ for $\theta\!\neq\!0$. Indeed,
\begin{eqnarray*}
|J|^2-1=\frac{|2+H_n(1+\~F)|^2-|2+\overline
H_n(1-\~F)|^2}{|2+\overline
H_n(1-\~F)|^2}=\frac{4\Re\left((H_n+\overline
H_n+|H_n|^2)\overline{\~F}\right)}{|2+\overline
H_n(1-\~F)|^2}=\\=\frac{4(\Re\~F)(|H_n|^2+2\Re H_n)}{|2+\overline
H_n(1-\~F)|^2}
\end{eqnarray*}
and the last expression is positive by the choice of $\~F$ and $H_n$. For $\theta=0$, we have $J(0)=1$.

For small $\theta$, the following asymptotics holds
\begin{equation}\label{masa1}
|J|^2-1\sim
|\theta|^{2-\alpha}\left(\frac{\rho\e_n}{\e_n^2+\theta^2}+|\theta|^{-\alpha}\right),\quad|\theta|>0.1n^{-1}
\end{equation}
and
\[
|J|^2-1\sim\left(\frac{\rho\e_n}{\e_n^2+\theta^2}+\e_n^{-\alpha}\right)|\theta|n^{\alpha-1},\quad|\theta|<0.1n^{-1}\,\,.
\]
If we write
\[
J=r(\theta)e^{i\Upsilon(\theta)}, \quad \Upsilon(0)=0
\]
then
\begin{equation}\label{merka}
(\sigma')^{-1}\sim|e^{i(n\theta-2\phi(\theta)-\Upsilon(\theta))}+r(\theta)|^2=
(\cos(n\theta-2\phi(\theta)-\Upsilon(\theta))+r(\theta))^2+\sin^2(n\theta-2\phi(\theta)-\Upsilon(\theta))\,\,.
\end{equation}

Consider $\{\widetilde\theta_j\}$, the solutions to
\[
n\theta-2\phi(\theta)-\Upsilon(\theta)=\pi+2\pi j,\quad|j|<N_3
\]
that belong to some small fixed arc $|\theta|<\upsilon$. We have $\Upsilon(0)=0$ and the direct estimation gives
\[
|\Upsilon'(\theta)|<0.1n
\]
uniformly for all $\theta$. Indeed, it is sufficient to show that
\begin{equation}\label{1976-1}
\left|\partial_\theta\left(\frac{2+H_n(1+\~F)}{2+\overline
H_n(1+\overline{\~F})}\right)\right|<0.01n
\end{equation}
and
\begin{equation}\label{1976-2}
\left|\partial_\theta\left(\frac{2+\overline
H_n(1-\~F)}{2+H_n(1-\overline{\~F})}\right)\right|<0.01n\,\,.
\end{equation}
The both inequalities are proved in Lemma \ref{faza-5} from
Appendix B.

Now we can argue that the distance between the consecutive
$\{\widetilde\theta_j\}$ is of size $n^{-1}$ and
\[
\sigma'\sim\frac{1}{(r(\widetilde
\theta_j)-1)^2+n^2(\theta-\widetilde\theta_j)^2}= \widetilde
m_j\frac{\widetilde y_j}{\widetilde
y_j^{\,2}+(\theta-\widetilde\theta_j)^2}
\]
on $\theta: |\theta-\widetilde\theta_j|<Cn^{-1}$. In the Poisson
kernel, the mollification parameter $\widetilde y_j$ is
\[
\widetilde y_j=\frac{r(\widetilde \theta_j)-1}{n}
\]
and the mass $\widetilde m_j$ is given by
\[
\widetilde m_j=\frac{1}{n(r(\widetilde\theta_j)-1)}
\]
Notice, that
\[
\sum_{j=-N_3}^{N_3} \widetilde m_j\lesssim
\int_{0.1n^{-1}<|\theta|<\upsilon}
\frac{d\theta}{r(\theta)-1}<\infty
\]
as follows from \eqref{masa1} and $\alpha\in (1/2,1)$. Away from
these $\{\widetilde\theta_j\}$ the density is $\sim 1$.

{\bf Remark.} In the estimates above, we assumed that $\rho$ is
small: $\rho\in (0,\rho_0)$. The choice of $\rho_0$ is made in Lemma
\ref{faza-5} (see the Remark after it) and in \eqref{mesto-odin}.
Thus, we first fix a parameter $\alpha$ and then fix  $\rho$. In
fact, we need $\rho$ to be small only to control $\sigma'$ and it is
irrelevant for the proof of the main Theorem. \vspace{0.5cm}

The measure $\sigma$ constructed in the proof has no singular part.
Its regularity can be summarized in the following Lemma. For
$\delta\in (0,1), p\in [1,\infty], C>0$, let us introduce the
following class of measures given by a weight
\[
S_{\delta}^{(p,C)}=\{\sigma: d\sigma=w(\theta)d\theta, w\ge
\delta/(2\pi), \|w\|_1=1, \|w\|_p\le C\}
\]
and let
\[
M_{n,\delta}^{(p,C)}=\sup_{\sigma\in S_{\delta}^{(p,C)}}
\|\phi_n(z,\sigma)\|_{L^\infty(\mathbb{T})}
\]

\begin{lemma}\label{build-up}
For every $p\in [1,\infty)$ there is $\delta_0\in (0,1)$ and $C>0$
such that
\[
M_{n,\delta_0}^{(p,C)}\sim \sqrt n
\]
\end{lemma}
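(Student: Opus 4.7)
The plan is as follows. The upper bound $M_{n,\delta_0}^{(p,C)}\le\sqrt{(n{+}1)/\delta_0}$ is immediate from Lemma~\ref{ots-sv}, since $S_{\delta_0}^{(p,C)}\subset S_{\delta_0}$, so only the lower bound requires work. The strategy is to reuse the explicit construction from the proof of Theorem~\ref{T3} and to show that, for a suitable $\alpha\in(1/2,1)$ depending on $p$, the measure $\sigma$ produced there satisfies $\sigma'\ge\delta_0/(2\pi)$ pointwise and $\|\sigma'\|_p\le C$ uniformly in $n$. Then $\sigma\in S_{\delta_0}^{(p,C)}$ and Theorem~\ref{T3} yields $\|\phi_n(\cdot,\sigma)\|_\infty\gtrsim\sqrt n$, giving both estimates together.

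For the structure of $\sigma'$ I appeal directly to the description derived just above the statement of the Lemma: $\sigma'=\cal B(\theta)+\sum_{|j|\le N_3}P_j(\theta)$, where $0<C_1\le\cal B\le C_2$ uniformly in $n$, and each $P_j$ has Poisson profile
\[
P_j(\theta)\sim\~m_j\,\frac{\~y_j}{\~y_j^{\,2}+(\theta-\~\theta_j)^2}\,,\qquad\~y_j=\frac{r(\~\theta_j)-1}n\,,\quad\~m_j=\frac1{n(r(\~\theta_j)-1)}\,.
\]
Pointwise positivity of $\cal B$ already gives $\sigma'\ge C_1\ge\delta_0/(2\pi)$ with $\delta_0:=2\pi C_1$ independent of $n$. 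For the $L^p$ bound I use the elementary scaling $\|y/(y^2+\theta^2)\|_{L^p(\mathbb R)}^p\sim y^{1-p}$, so the contribution of the $j$-th peak is $\~m_j^{\,p}\~y_j^{\,1-p}\sim n^{-1}(r(\~\theta_j)-1)^{1-2p}$. From \eqref{masa1}, in the regime $|\~\theta_j|\gtrsim n^{-1}$ the non-$\rho$ term dominates and $r(\~\theta_j)-1\sim\~\theta_j^{\,2-2\alpha}\sim(j/n)^{2-2\alpha}$.

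Summing the peak contributions then gives
\[
\|\sigma'\|_p^p\lesssim 1+n^{-1}\sum_{j=1}^{[cn]}(j/n)^{(2-2\alpha)(1-2p)}\,.
\]
The right-hand side is a Riemann sum for $\int_0^c u^{(2-2\alpha)(1-2p)}\,du$, which is finite precisely when $(2-2\alpha)(1-2p)>-1$, equivalently $\alpha>1-\tfrac1{2(2p-1)}$. Given $p\in[1,\infty)$ I fix $\alpha\in\bigl(\max(1/2,\,1-\tfrac1{2(2p-1)}),\,1\bigr)$ at the outset of the construction; with this choice $\|\sigma'\|_p\lesssim 1$ uniformly in $n$, as required. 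Note that as $p\to\infty$ the admissible window for $\alpha$ shrinks to the empty set, in agreement with the restriction $p<\infty$ in the statement.

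The step I expect to be the main obstacle is not the $L^p$ arithmetic but rather the upgrade from the asymptotic description $\sigma'\sim\ldots$ of the previous section to honest pointwise upper bounds with constants independent of $n$, together with the handling of the few near-origin peaks with $|j|\le j_0$ where the $\rho\e_n/(\e_n^2+\theta^2)$ term of \eqref{masa1} is no longer negligible and $\~y_j$ may become comparable to the peak spacing $n^{-1}$; there one must check that the peaks do not overlap destructively and that this finite collection of excluded indices contributes only $O(1)$ to $\|\sigma'\|_p^p$. Once these technical points are settled, $\sigma\in S_{\delta_0}^{(p,C)}$ for some fixed $\delta_0$ and $C$, and Theorem~\ref{T3} closes the argument.
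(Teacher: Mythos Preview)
Your proposal is correct and follows essentially the same route as the paper's own proof: the upper bound is trivial, and for the lower bound you control $\|\sigma'\|_p$ via the Poisson-peak description of $\sigma'$, reducing to the sum $n^{-1}\sum_j(r(\~\theta_j)-1)^{1-2p}$ and choosing $\alpha>1-\tfrac1{2(2p-1)}=(4p-3)/(4p-2)$, which is exactly the paper's threshold. The paper argues directly from \eqref{merka}, integrating $((r-1)^2+n^2\theta^2)^{-p}$ over the disjoint intervals of width $\sim n^{-1}$, whereas you invoke the scaling $\|y/(y^2+\theta^2)\|_{L^p(\mathbb R)}^p\sim y^{1-p}$; these are the same computation, and the disjointness of the intervals is what legitimises summing the individual peak contributions (so the additive decomposition you wrote should be read as ``$\sigma'\lesssim$ peak on $I_j$, $\sigma'\sim1$ elsewhere'' rather than a literal sum). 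The near-origin indices you flag as a possible obstacle contribute only $O(1)$ terms and are harmless, as the paper's one-line estimate confirms.
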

\begin{proof}
The upper estimate is obvious since $M_{n,\delta}^{(p,C)}\le
M_{n,\delta}$. For the lower one, we only need to show that for
every large $p$ one can take $\alpha$ in the proof of Theorem
\ref{T3} so close to 1 that $\|w\|_p<C$ where $w=\sigma'$ and $C$
 is independent of $n$. The estimate \eqref{merka} and the bounds
 on the derivatives of $\phi$ and $\Upsilon$ yield
 \[
 \int_{\mathbb{T}} w^pd\theta \lesssim 1+\sum_{j=1}^{N_3}
\int_0^{Cn^{-1}} \frac{d\theta}{((r(\widehat
\theta_j)-1)^2+n^2\theta^2)^p}\lesssim 1+\sum_{j=1}^{N_3} \frac{1}{n
(r(\widehat\theta_j)-1)^{2p-1}}
 \]
 \[
 \lesssim
 1+\int_0^1\frac{d\theta}{|\theta|^{(2p-1)(2-2\alpha)}}<\infty
 \]
 if $\alpha\in ((4p-3)/(4p-2),1)$. Here we used \eqref{masa1} to estimate $r(\theta)-1$.
\end{proof}

\bigskip
{\Large\part{Bernstein's method and localization.  The proofs of
Theorem~\ref{T3-i} and Theorem~\ref{rrra-i}}\bigskip}

In this part, we will use the ``localization principle" to first
prove the lower bounds on $M_{n,\delta}$ in the full range of
$\delta$ (Theorem \ref{T3-i}) and then iterate this construction and
prove Theorem \ref{rrra-i}.\bigskip\bigskip

\section{The method by Bernstein and  localization principle.}
Given a weight $w$ on $[-\pi,\pi]$, we define
\[
\lambda(w)=\exp\left(\frac{1}{4\pi}\int_{\mathbb{T}} \ln(2\pi
w(\theta)) d\theta\right), \quad \Lambda(w)=\sqrt{\|w\|_1}\,.
\]
We have
\[
\phi_n(z,w)=\|w\|_1^{-1/2}\phi_n(z,w/\|w\|_1)
\]
and \eqref{gabor1}, \eqref{gabor2} yield
\[
\exp\left(\frac{1}{4\pi}\int_{\mathbb{T}} \ln(2\pi
w(\theta)/\|w\|_1) d\theta\right)\le
\left|\frac{\Phi_n(z,w)}{\phi_n(z,w/\|w\|_1)}\right|\le 1\,.
\]
So,
\begin{equation}\label{sravni}
\lambda(w)\le\left|\frac{\Phi_n(z,w)}{\phi_n(z,w)}\right|\le
\Lambda(w)\,.
\end{equation}
In \cite{bernstein}, S. Bernstein studied the asymptotics of the
polynomials when the weight of orthogonality is regular and
introduced a method which we will use when proving the following
Theorem.
\begin{theorem}\label{lemma21}
Let $w_{1(2)}$ be two weights on $[-\pi,\pi]$ so that
\begin{equation}\label{c2}
w_1(\theta)=w_2(\theta), \quad \theta\in [-\epsilon,\epsilon]\,.
\end{equation}
Then
\begin{equation}\label{localization}
\left|\frac{\phi_n(1,w_1)}{\phi_n(1,w_2)}\right|\le
\frac{\Lambda(w_2)}{\lambda(w_1)}+\frac{4\Lambda(w_1)}{\epsilon\lambda(w_1)}
\left(\int_{|\theta|>\epsilon}|\phi_n(e^{i\theta},w_1)\phi_n(e^{i\theta},w_2)|(w_1+w_2)d\theta\right)
\end{equation}
for all $n$.
\end{theorem}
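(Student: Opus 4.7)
The plan is to cancel the leading coefficients of $\phi_n(\cdot,w_1)$ and $\phi_n(\cdot,w_2)$. Since both $\phi_n(z,w_1)$ and $(\lambda_n(w_1)/\lambda_n(w_2))\phi_n(z,w_2)$ share the leading coefficient $\lambda_n(w_1)$, the polynomial
\[
R_{n-1}(z):=\phi_n(z,w_1)-\frac{\lambda_n(w_1)}{\lambda_n(w_2)}\phi_n(z,w_2)
\]
has degree at most $n-1$. Evaluating at $z=1$ and dividing by $\phi_n(1,w_2)$ yields
\[
\frac{\phi_n(1,w_1)}{\phi_n(1,w_2)}=\frac{\lambda_n(w_1)}{\lambda_n(w_2)}+\frac{R_{n-1}(1)}{\phi_n(1,w_2)}\,.
\]
Since $1/\Lambda(w)\le\lambda_n(w)\le 1/\lambda(w)$ by \eqref{sravni}, the first summand is at most $\Lambda(w_2)/\lambda(w_1)$, matching the first term on the right of \eqref{localization}.

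To handle $R_{n-1}(1)$, I will use the reproducing property for polynomials of degree $\le n-1$:
\[
R_{n-1}(1)=\int_{\mathbb T}R_{n-1}(e^{i\theta})\,K_{n-1}(e^{i\theta},1,w_2)\,w_2\,d\theta,
\]
together with two cancellations. First, the $w_2$-orthogonality of $\phi_n(\cdot,w_2)$ to polynomials of degree $<n$ makes its contribution vanish. Second---the less routine step---the restriction $K_{n-1}(\cdot,1,w_2)|_{\mathbb T}=\sum_{j<n}\phi_j(1,w_2)\,\overline{\phi_j(\cdot,w_2)}|_{\mathbb T}$ is a polynomial in $e^{-i\theta}$ of degree $\le n-1$; since $\int\phi_n(w_1)e^{-ik\theta}w_1\,d\theta=\langle\phi_n(\cdot,w_1),z^k\rangle_{w_1}=0$ for $0\le k\le n-1$, this forces $\int\phi_n(w_1)K_{n-1}(\cdot,1,w_2)w_1\,d\theta=0$ as well. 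Subtracting this identity from the reproducing one and using $w_1\equiv w_2$ on $[-\epsilon,\epsilon]$ localizes the expression:
\[
R_{n-1}(1)=\int_{|\theta|>\epsilon}\phi_n(e^{i\theta},w_1)\,K_{n-1}(e^{i\theta},1,w_2)\,(w_2-w_1)\,d\theta\,.
\]

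The remaining step is a pointwise bound on the kernel via the Christoffel--Darboux formula; together with $|\phi_n^*|=|\phi_n|$ on $\mathbb T$ it yields
\[
|K_{n-1}(e^{i\theta},1,w_2)|\le\frac{2|\phi_n(e^{i\theta},w_2)|\,|\phi_n(1,w_2)|}{|1-e^{-i\theta}|}\,.
\]
The elementary inequality $|1-e^{-i\theta}|=2|\sin(\theta/2)|\ge 2\epsilon/\pi$ for $|\theta|\ge\epsilon\in(0,\pi]$, combined with $|w_2-w_1|\le w_1+w_2$, then produces
\[
\left|\frac{R_{n-1}(1)}{\phi_n(1,w_2)}\right|\le\frac{\pi}{\epsilon}\int_{|\theta|>\epsilon}|\phi_n(w_1)\phi_n(w_2)|(w_1+w_2)\,d\theta\,.
\]
Since $\pi<4$ and $\Lambda(w_1)/\lambda(w_1)\ge 1$ (AM--GM applied to $2\pi w_1$), the estimate \eqref{localization} follows \emph{a fortiori}.

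The main step to execute carefully is the double cancellation producing the $\{|\theta|>\epsilon\}$ restriction; what makes it work is recognizing $K_{n-1}(\cdot,1,w_2)|_{\mathbb T}$ as a polynomial in $\bar z$ of degree $\le n-1$, so the $w_1$-orthogonality of $\phi_n(\cdot,w_1)$ applies even though the kernel is assembled from polynomials orthogonal with respect to $w_2$.
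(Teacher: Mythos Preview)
Your proof is correct and follows essentially the same route as the paper's: expand in the $w_2$-orthonormal system via the reproducing kernel, use orthogonality of $\phi_n(\cdot,w_1)$ with respect to $w_1$ to replace $w_2$ by $w_2-w_1$ and localize to $|\theta|>\epsilon$, then bound $K_{n-1}$ by the Christoffel--Darboux formula. The only cosmetic difference is that the paper expands the monic polynomial $\Phi_n(\cdot,w_1)$ and converts to $\phi_n$ afterward via \eqref{sravni}, whereas you work directly with $\phi_n(\cdot,w_1)$; this lets you reach $\pi/\epsilon$ without the extra factor $\Lambda(w_1)/\lambda(w_1)$, which you then reinsert to match the stated inequality.
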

\begin{proof}

 Following Bernstein, we write
\begin{equation}\label{lenin}
\Phi_n(z,w_1)=\varrho_n
\phi_n(z,w_2)+\sum_{j=0}^{n-1}\phi_j(z,w_2)\int_{-\pi}^\pi\Phi_n(e^{i\theta},w_1)
\overline{\phi}_j(e^{i\theta},w_2)w_2(\theta)d\theta
\end{equation}
with some coefficient $\varrho_n$.  By orthogonality,
\[
\Phi_n(z,w_1)=\varrho_n
\phi_n(z,w_2)+\sum_{j=0}^{n-1}\phi_j(z,w_2)\int_{-\pi}^\pi\Phi_n(e^{i\theta},w_1)
\overline{\phi}_j(e^{i\theta},w_2)\Bigl(w_2(\theta)-w_1(\theta)\Bigr)d\theta=
\]
\[
\varrho_n \phi_n(z,w_2)+\int_{-\pi}^\pi\Phi_n(e^{i\theta},w_1)
K_{n-1}(e^{i\theta},z,w_2)
\Bigl(w_2(\theta)-w_1(\theta)\Bigr)d\theta\,.
\]
The Christoffel-Darboux kernel $K_{n-1}$ admits a representation
(see, e.g., \cite{murad}, p. 225, formula~(8.2.1)):
\[
K_{n-1}(\xi,z,\mu)=\frac{\phi_n^*(z,\mu)\overline{\phi_n^*}(\xi,\mu)-\phi_n(z,\mu)\overline{\phi_n}(\xi,\mu)}{1-z\overline
\xi}\,.
\]
Then,
\[
|\Phi_n(1,w_1)|\le |\phi_n(1,w_2)|\left(
\varrho_n+4\epsilon^{-1}\int_{|\theta|>\epsilon}|\Phi_n(e^{i\theta},w_1)|\cdot
|\phi_n(e^{i\theta},w_2)|(w_1+w_2)d\theta \right)\,.
\]
We will now use \eqref{sravni}. Comparing the coefficients in front
of $z^n$ in \eqref{lenin}, we get
\[
\lambda(w_2)\le \varrho_n\le \Lambda(w_2)
\]
and so
\[
\left|\frac{\phi_n(1,w_1)}{\phi_n(1,w_2)}\right|\le
\frac{\Lambda(w_2)}{\lambda(w_1)}+\frac{4\Lambda(w_1)}{\epsilon\lambda(w_1)}
\left(\int_{|\theta|>\epsilon}|\phi_n(e^{i\theta},w_1)\phi_n(e^{i\theta},w_2)|(w_1+w_2)d\theta\right)
\]
by the repetitive application of \eqref{sravni}.
\end{proof}

\bigskip
\section{The proofs of Theorem \ref{T3-i} and Theorem \ref{rrra-i}.}

We start with a Lemma which will immediately imply Theorem
\ref{T3-i}. It allows to perturb very general measures and have the
orthogonal polynomial grow.

\begin{lemma}\label{vozmuse}
Assume $\delta\in (0,1], p\in [2,\infty)$ and the weight $w$
satisfies the following properties:
\[
\|w\|_1=1, \quad w\ge \delta/(2\pi), \quad w\in L^p[-\pi,\pi]\,.
\]
Then, for arbitrary $\epsilon>0$ and $n\in \mathbb{N}$, there is a
weight $\widetilde w$ such that
\begin{equation}\label{trebovanie}
\|\widetilde w\|_1=1, \quad \widetilde w\ge \delta/(2\pi)-\epsilon,
\quad \|w-\widetilde w\|_p\le \epsilon
\end{equation}
and
\[
|\phi_n(1,\widetilde w)|>C(\epsilon,\delta,p,w)\sqrt n\,.
\]
\end{lemma}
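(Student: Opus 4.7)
The strategy is to combine Lemma~\ref{build-up}, which produces a weight $u_n$ with $|\phi_n(1,u_n)|\sim\sqrt n$ and the pointwise polynomial decay \eqref{pol-sv}, with the Bernstein--localization principle of Theorem~\ref{lemma21}, in order to transfer the large value $|\phi_n(1,\cdot)|\gtrsim\sqrt n$ from $u_n$ onto a suitable local modification of $w$.

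First, I would fix an exponent $q>p$ sufficiently large and invoke Lemma~\ref{build-up} to obtain $\delta_0\in(0,1)$, $C_0>0$, and a weight $u_n$ with $u_n\ge\delta_0/(2\pi)$, $\|u_n\|_q\le C_0$, $|\phi_n(1,u_n)|\ge c\sqrt n$, and the pointwise bound \eqref{pol-sv}. In particular, \eqref{pol-sv} shows that $|\phi_n(e^{i\theta},u_n)|$ is $O(1)$ once $|\theta|$ is bounded away from $0$, so the relevant behaviour is concentrated near $z=1$. I would then construct $\widetilde w$ by stitching: pick a small arc $I=\{|\theta|\le\eta\}$, with $\eta=\eta(\epsilon,\delta,p,w)$ chosen later, set $\widetilde w=u_n$ on $I$ (or $u_n+c$ if $\delta_0<\delta$, with the constant $c$ chosen so that the local Steklov bound $\widetilde w\ge\delta/(2\pi)-\epsilon$ holds on $I$) and $\widetilde w=w$ on $I^c$, and renormalise by a factor $1+O(\eta)$ to enforce $\|\widetilde w\|_1=1$. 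The first three requirements in \eqref{trebovanie} are then immediate: the Steklov bound holds on $I^c$ because $w$ satisfies it, on $I$ by construction; the $L^1$ normalisation is enforced; and
\[
\|\widetilde w-w\|_p\le\|u_n\chi_I\|_p+\|w\chi_I\|_p\le(2\eta)^{1/p-1/q}C_0+\|w\chi_I\|_p<\epsilon
\]
for $\eta$ small enough, using the $L^q\hookrightarrow L^p$ embedding on $I$ and absolute continuity of the $L^p$ integral of $w$.

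For the polynomial lower bound, I would apply Theorem~\ref{lemma21} with $w_1=u_n$ (or its shifted version agreeing with $\widetilde w$ on $I$) and $w_2=\widetilde w$, whose weights now coincide on $I$. The prefactors $\Lambda(\cdot),\lambda(\cdot)$ in \eqref{localization} are $O(1)$ since both weights are bounded away from $0$ and have $L^1$ masses close to $1$. For the off-diagonal term, the pointwise bound \eqref{pol-sv} gives $|\phi_n(e^{i\theta},u_n)|\le C(\eta)$ for $|\theta|>\eta$, while Lemma~\ref{ots-sv} combined with Cauchy--Schwarz and the $L^q$ control on $u_n$ and $w$ bounds the remaining pieces; a short computation then shows the integral on the right-hand side of \eqref{localization} is at most $|\phi_n(1,u_n)|/2$ for the chosen $\eta$. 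Swapping the roles of $w_1,w_2$ in \eqref{localization} and rearranging yields $|\phi_n(1,\widetilde w)|\ge|\phi_n(1,u_n)|/O(1)\gtrsim\sqrt n$, with a constant depending only on $\epsilon,\delta,p,w$.

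\textbf{Main obstacle.} The delicate point is reconciling the \emph{exact} pointwise agreement $w_1\equiv w_2$ on $I$ demanded by Theorem~\ref{lemma21} with the Steklov constraint $\widetilde w\ge\delta/(2\pi)-\epsilon$: since $u_n$ is only guaranteed to satisfy $u_n\ge\delta_0/(2\pi)$ with $\delta_0$ possibly much smaller than $\delta$, stitching $u_n$ directly into $w$ on $I$ may fail the Steklov floor, while any additive correction $u_n+c$ destroys the exact agreement. The natural resolution is either to invoke Theorem~\ref{lemma21} twice, first to check that $u_n+c$ still has $|\phi_n(1,u_n+c)|\gtrsim\sqrt n$ and then from $u_n+c$ to $\widetilde w$, or else to refine the construction behind Lemma~\ref{build-up} so that $u_n\ge\delta/(2\pi)-\epsilon$ holds on a prescribed arc containing $z=1$, which is consistent with the measure structure described after \eqref{pol-sv}. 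The other technical step, bounding the off-diagonal integral in \eqref{localization} by a small multiple of $|\phi_n(1,u_n)|$, reduces to routine estimates using \eqref{pol-sv} for one polynomial and the crude Lemma~\ref{ots-sv} bound for the other.
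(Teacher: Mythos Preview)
Your overall architecture---Lemma~\ref{build-up} for the model weight, stitching on a small arc $I_\tau$, then Theorem~\ref{lemma21} to transfer the large value---is exactly the paper's. The difference is in how you handle what you call the ``main obstacle,'' and there the paper has a one-line trick that you missed.

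Rather than stitching $u_n$ (or $u_n+c$) into $w$, the paper stitches $u_n/\delta_0$. Since $u_n\ge\delta_0/(2\pi)$, the rescaled piece satisfies $u_n/\delta_0\ge 1/(2\pi)\ge\delta/(2\pi)$ automatically (recall $\delta\le 1$), so the Steklov floor holds on $I_\tau$ with no additive correction. And because $\phi_n(z,u_n/\delta_0)=\sqrt{\delta_0}\,\phi_n(z,u_n)$, the $\sqrt n$ growth survives up to a fixed factor. Now $w_1=u_n/\delta_0$ and the stitched weight $w_2$ agree exactly on $I_\tau$, and Theorem~\ref{lemma21} applies directly.

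Your proposed fix of invoking Theorem~\ref{lemma21} to pass from $u_n$ to $u_n+c$ does not work: that theorem requires the two weights to coincide on an arc around $z=1$, and $u_n$ and $u_n+c$ coincide nowhere. Your alternative fix (refine the construction in Lemma~\ref{build-up} to force $u_n\ge\delta/(2\pi)-\epsilon$ near $z=1$) could in principle be carried out but is unnecessary once you see the multiplicative rescaling.

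One minor point: in the off-diagonal estimate you say the integral in \eqref{localization} should be at most $|\phi_n(1,u_n)|/2$. That is not how the bound works. The right-hand side of \eqref{localization} is a bound on the \emph{ratio} $|\phi_n(1,w_1)/\phi_n(1,w_2)|$; using \eqref{pol-sv} one gets $\max_{I_\tau^c}|\phi_n(e^{i\theta},w_1)|\lesssim\tau^{-1/2}$, so the whole right-hand side is a constant $C(\tau)$ independent of $n$, and hence $|\phi_n(1,\widetilde w)|\ge|\phi_n(1,w_1)|/C(\tau)\gtrsim\sqrt n$ with a constant depending on $\epsilon,\delta,p,w$ through $\tau$.
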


\begin{proof} Take any $\delta\in
(0,1]$. For every $p_1\in (p,\infty)$, Lemma \ref{build-up} yields
$\sigma_1: d\sigma_1=\sigma'_1d\theta$ so that
\[
\sigma'_1\ge \delta_0/(2\pi)>0,\quad
\|\sigma'_1\|_{L^{p_1}(\mathbb{T})}<C(p_1), \quad
\|\sigma_1'\|_{L^1(\mathbb{T})}=1, \quad
|\phi_n(1,\sigma_1)|>C(p_1)\sqrt n\,.
\]
The constants $C(p_1)$ above are $n$--independent. Consider an
interval $I_\tau=(-\tau,\tau)$.  Then,
\[
\int_{I_\tau}d\sigma_1\lesssim \tau^{1/p_1'}\,.
\]
We now introduce two new weights $w_2,\widetilde w$ given by:
\[
 \quad w_2=\left\{
\begin{array}{cc}
\sigma'_1/\delta_0, & \theta\in I_\tau\\
w, & \theta\notin I_\tau
\end{array}
\right., \quad \widetilde w=w_2/{\|w_2\|_1}\,.
\]
We have $w_2\ge (2\pi)^{-1}\delta$ a.e. on $\mathbb{T}$ and
\[
\|w-w_2\|_p\le \|w\|_{L^p[-\tau,\tau]}+\|w_2\|_{L^p[-\tau,\tau]}\le
o(1)+ C(p_1)\tau^{p^{-1}-p_1^{-1}}
\]
by H\"older's inequality. Here $o(1)\to 0$ as $\tau\to 0$.
Therefore,
\[
\|w-w_2\|_1\lesssim o(1)+C(p_1)\tau^{p^{-1}-p_1^{-1}}\,.
\]
The triangle inequality and normalization $\|w\|_1=1$ give
\[
\|w_2\|_1=1+o(1)+O(\tau^{p^{-1}-p_1^{-1}})\,.
\]
We can choose $\tau$ small enough that the last two conditions in
\eqref{trebovanie} are satisfied for $\widetilde w$.  For the
corresponding polynomials, we have
\[
\phi_n(1,\sigma_1/\delta_0)=\sqrt{\delta_0}\phi_n(1,\sigma_1)
\]
so $|\phi_n(1,\sigma_1/\delta_0)|\gtrsim  \sqrt { n}$. Apply Lemma
\ref{lemma21} with $w_1=\sigma_1'/\delta_0$ and $w_2$ defined above.
Notice that $0<C_1(\delta,p)\le \lambda(w_{1(2)})\le C_2(\delta,p),
C_1(\delta,p)\le \Lambda(w_{1(2)})\le C_2(\delta,p)$ and $w_1=w_2$
on $I_\tau$ by construction. We have
\[
\int_{|\theta|>\tau}|\phi_n(e^{i\theta},w_1)\phi_n(e^{i\theta},w_2)|w_2d\theta\lesssim
\max_{I_\tau^c} |\phi_n(e^{i\theta},w_1)|
\cdot\|w_2\|_{L^2(\mathbb{T})}\cdot
\|\phi_n(e^{i\theta},w_2)\|_{L^2(\mathbb{T})}
\]
and
\[
\int_{|\theta|>\tau}|\phi_n(e^{i\theta},w_1)\phi_n(e^{i\theta},w_2)|w_1d\theta\le
\max_{I_\tau^c} |\phi_n(e^{i\theta},w_1)|
\cdot\|w_1\|_{L^2(\mathbb{T})}\cdot
\|\phi_n(e^{i\theta},w_2)\|_{L^2(\mathbb{T})}\,.
\]
The polynomials orthonormal with respect to a measure in Steklov
class have uniformly bounded $L^2(\mathbb{T})$ norm (see the
 proof of Lemma \ref{ots-sv}). For the estimation of $\max_{I_\epsilon^c}
|\phi_n(e^{i\theta},w_1)|$, we use \eqref{pol-sv} to get
\[
\max_{I_\tau^c} |\phi_n(e^{i\theta},w_1)|\lesssim
\left(\frac{n}{1+n^2\tau^2}+\tau^{-\alpha}\right)^{1/2}\lesssim
\tau^{-1/2}
\]
where $\alpha<1$. Thus, the localization principle
\eqref{localization} gives
\[
|\phi_n(1,\widetilde w)|\ge C(\epsilon,\delta,p,w)\sqrt n
\]
and the proof is finished.
\end{proof}

Now the proof of the Theorem \ref{T3-i} is immediate.
\begin{proof}{\it (of Theorem \ref{T3-i})}
It is sufficient to take $w=(2\pi)^{-1}$ and $\delta=1$.
\end{proof}

{\bf Remark.} Notice that this  proof allows to improve Lemma
\ref{build-up} to cover the full range of $\delta: \delta\in (0,1)$.
This statement is much stronger than the Theorem \ref{T3-i} itself:
it shows that $\sqrt n$ growth can be achieved on far more regular
weights.

\flushleft Now, we can iterate this construction to prove Theorem
\ref{rrra-i}.

\begin{proof}{\it(of Theorem \ref{rrra-i}).}
Fix any $\delta\in (0,1)$ and a sequence $\{\beta_n\}:
\lim_{n\to\infty}\beta_n=0$. We can assume without loss of
generality that $\beta_1<1$. Choose any $p\in [2,\infty)$ and
parameter $\widetilde C>1$. We construct the sequence of weights
$\{w_n\}$ through the following induction:
\begin{itemize}
\item{\bf First step:} We let $w_1=(2\pi)^{-1}$ and $k_1=1$. Then,
$|\phi_{k_1}(1,w_1)|=1>\beta_1$.
\item{\bf Inductive assumption:} We assume that the weight $w_{n}$
and the natural numbers $k_1< \ldots< k_{n}$ are given so that
\[
|\phi_{k_j}(1,w_n)|>\beta_{k_j}\sqrt{k_j}, \quad j=1,\ldots, n
\]
and
\begin{equation}\label{predpol}
\|w_n\|_1=1,\quad  \|w_n\|_p<\widetilde C,\quad w_n\ge
(\delta+(1-\delta)2^{-n})/(2\pi)\,.
\end{equation}
\item{\bf Inductive step:}
For every $\epsilon>0$ and $N$ we can use the perturbation Lemma
\ref{vozmuse} to get $w_{n+1}$ so that
\[
\|w_{n+1}\|_1=1, \quad w_{n+1}\ge
(\delta+(1-\delta)2^{-n})/(2\pi)-\epsilon, \quad \|w_{n+1}-w_n
\|_p\le \epsilon
\]
and
\[
|\phi_{N}(1,w_{n+1})|>C(\epsilon,\delta,p)\sqrt N\,.
\]
Notice that for fixed $j$ the functional $\phi_j(1,\sigma)$ is
continuous in $\sigma$ in weak--$(\ast)$ (and then in
$L^p(\mathbb{T})$) topology. The second inequality in
\eqref{predpol} is strict. So, we first choose $\epsilon$ so small
that:

\begin{itemize}
\item [1.] $\|w_n\|_p+\epsilon<\widetilde C$.

\item[2.] $|\phi_{k_j}(1,\nu)|>\beta_{k_j}\sqrt{k_j}, \,
\forall j=1\ldots n, $ as long as $\nu: \|\nu-w_n\|_p<\epsilon$.

\item[3.]
$(\delta+(1-\delta)2^{-n})/(2\pi)-\epsilon>(\delta+(1-\delta)2^{-n-1})/(2\pi)$.
\end{itemize}
Then, with fixed $\epsilon$, take $N$ large so that
 $N>k_{n}$ and $C(\epsilon,\delta,p)\sqrt N>\beta_N\sqrt N$. We can always achieve that since $\lim_{l\to\infty} \beta_l=0$.
 Now, let
 $k_{n+1}=N$.

\end{itemize}
Thus, we constructed the new weight $w_{n+1}$ and $k_{n+1}$ that
satisfy all induction assumptions. At each step when going from $n$
to $n+1$ we choose new $\epsilon$ that depends on $n$, the step of
induction.

By construction, $\|w_{n+1}-w_n\|_p\lesssim 2^{-n}$ so $w_n$
converges to some $w$ in $L^p(\mathbb{T})$ norm. Moreover, $w\ge
\delta/(2\pi)$ a.e. on $\mathbb{T}$. We use the continuity of
$\phi_j(1,\sigma)$ in $\sigma$ again to get
\[
|\phi_{k_j}(1,w)|\ge \beta_{k_j}\sqrt{k_j}, \quad \forall j
\]
and that finishes the proof.
\end{proof}

{\bf Remark.} It is clear that our construction allows to have the
polynomials grow simultaneously at any finite number of points on
the circle. We also can make the measure of orthogonality symmetric
with respect to both axis $OX$ and $OY$. Indeed, the measure we
constructed in the Theorem is given by the even weight $w$. Now, for
every $N\in \mathbb{N}$, we can take $w_N(x)=w(Nx)$ and then
\[
\phi_{Nj}(z,w_N)=\phi_j(z^N,w)
\]
To make the measure symmetric with respect to both $OX$ and $OY$, it
is sufficient to take $N=2$.

{\bf Remark.} The conjecture of Steklov and its solution can be
interpreted as follows. It is known that  $\{\Phi_n(z)\}$ satisfy
the recursion
\[
\Phi^*_{n+1}(z)=\Phi_n^*(z)-\gamma_n z\Phi_n(z), \quad \Phi^*_0(z)=1\,\,.
\]
Therefore,
\[
\Phi_n^*(z)=1-z\sum_{j=0}^{n-1}\gamma_j \Phi_j(z)\,\,.
\]
Recall that $\sigma\in S_\delta$ implies $\{\gamma_j\}\in \ell^2$
and one can define a maximal function in analogy to the Carleson
maximal function in Fourier series, i.e.,
\[
\cal{M}(\theta)=\sup_{n} \left|\sum_{j=0}^n \gamma_j
\Phi_j(e^{i\theta})\right|, \quad \cal{M}(\theta)\sim
\sup_{n}|\Phi_n^*(e^{i\theta})-1|\,\,.
\]
Then, for the example we constructed,
\[
\sum_{j=0}^\infty \gamma_j \Phi_j(e^{i\theta})\in
L^\infty(\mathbb T) \quad {\rm but} \quad {\cal
M}(e^{i\theta})\notin L^\infty(\mathbb T)\,\,.
\]
\bigskip

{\Large \part{Applications}}\smallskip

In this part, we apply the obtained results to handle the case of
the orthogonality on the segment on the  real line. We also prove
the sharp bounds for the polynomial entropy in the Steklov
class.\bigskip

\section{Back to the real line.}

In the case when the measure $\sigma$ is symmetric on $\mathbb T$
with respect to the real line, one can relate $\{\phi_n(z,\sigma)\}$
to polynomials orthogonal on the real line through the following
standard procedure. Let $\psi, (x\in [-1,1], \psi(-1)=0 )$ be a
non-decreasing bounded function with an infinite number of growth
points. Consider the system of polynomials $\{P_k\}, (k=0,1,\ldots
)$ orthonormal with respect to the measure $\psi$ supported on the
segment $[-1,1]$. Introduce the function

\begin{equation}
\sigma(\theta)=\left\{
\begin{array}{cc}
-\psi(\cos \theta), & 0\le \theta\le \pi, \\
\psi(\cos \theta), & \pi \le \theta \le 2\pi,%
\end{array}%
\right.\label{trans}
\end{equation}
which is bounded and non-decreasing on $[0,2\pi]$. Consider the polynomials $%
\phi_k(z,\sigma)=\lambda_k z^k+\cdots,$
$(\lambda_k=(\rho_0\cdot\ldots\cdot \rho_{k-1})^{-1}>0)$ orthonormal
with respect to $\sigma$. Then, $\phi_n$ is related to $P_k$ by the
formula
\begin{equation}\label{real-line}
P_k(x,\psi)=\frac{\phi_{2k}(z,\sigma)+\phi^{*}_{2k}(z,\sigma)}{\sqrt{2\pi
\left[ 1+\lambda_{2k}^{-1}\phi_{2k}(0,\sigma)\right]}}\,
z^{-k},\quad k=0,1,\ldots,
\end{equation}
where $x\!=\!(z\!+\!z^{-1})/2$ (\cite{5,6}). This reduction also
works in the opposite direction: given the symmetric measure
$\sigma$ we can map it to the measure on the real line and the
corresponding polynomials will be related by \eqref{real-line}. We
are ready to formulate the Theorem.
\begin{theorem}\label{rrra-i-r}
Let $\delta\!>\!0$.  Then, for every positive sequence $\{\beta_n\}:
\lim_{n\to\infty}\beta_n=0$, there is a measure $\psi:d\psi=\psi'dx$
supported on $[-1,1]$ such that $\psi'(x)\geq \delta$ for a.e. $x\in
[-1,1]$ and

\begin{equation}\label{est-ra-i-r}
|P_{k_n}(0,\psi)|\gtrsim \beta_{k_n} \sqrt{{k_n}}
\end{equation}
for some sequence $\{k_n\}\subset \mathbb{N}$.
\end{theorem}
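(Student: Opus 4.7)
The plan is to deduce Theorem~\ref{rrra-i-r} from Theorem~\ref{rrra-i} via the Szeg\H{o} transformation \eqref{trans}, exploiting the freedom to impose symmetries on the measure on the circle. The first step is to construct a probability measure $\sigma$ on $\mathbb T$ with three features: (i) $\sigma\in S_{\delta'}$ for an auxiliary parameter $\delta'>0$; (ii) $\sigma$ is symmetric with respect to both coordinate axes, i.e.\ $d\sigma(\theta)=d\sigma(-\theta)=d\sigma(\pi-\theta)$; and (iii) the corresponding orthonormal polynomials satisfy $|\phi_{2k_n}(i,\sigma)|\gtrsim \widetilde\beta_{k_n}\sqrt{k_n}$ for some subsequence of \emph{even} indices $\{k_n\}$ and some vanishing sequence $\widetilde\beta_n$, chosen so as to reproduce the given $\beta_n$ at the end. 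Such a $\sigma$ is produced by running the inductive construction of Theorem~\ref{rrra-i} while preserving both symmetries and then applying the dilation $w_2(\theta)=w(2\theta)$ from the Remark after that proof: starting from a weight $w$ whose polynomials grow at $z=-1$, the identity $\phi_{2j}(z,w_2)=\phi_j(z^2,w)$ relocates the growth to $z=\pm i$ and automatically doubles the symmetries.

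The second step is to pass from $\sigma$ to $\psi$ via \eqref{trans}. With $x=\cos\theta$ the change of variable produces $\psi'(x)=2\sigma'(\arccos x)/\sqrt{1-x^2}\ge\delta'/(\pi\sqrt{1-x^2})\ge\delta'/\pi$ for $x\in[-1,1]$, so choosing $\delta'\ge\pi\delta$ (replacing $\delta$ by a smaller value if necessary, since the statement is monotone in $\delta$) guarantees $\psi'(x)\ge\delta$ a.e. I would then plug $z=i$ (so $x=0$) into \eqref{real-line}. The $OX$-symmetry of $\sigma$ makes the Verblunsky coefficients real, so $\Phi_{2k}(0,\sigma)=-\gamma_{2k-1}\in(-1,1)$; the $OY$-symmetry additionally forces $\gamma_{2j}=0$ and makes $\phi_{2k}(z,\sigma)$ a polynomial in $z^{2}$ with real coefficients, hence $\phi_{2k}(i,\sigma)\in\mathbb R$. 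Combined with the identity $\phi^*_{2k}(i)=i^{2k}\overline{\phi_{2k}(i)}=(-1)^k\phi_{2k}(i)$, valid for real-coefficient polynomials on $\mathbb T$, this yields, for \emph{even} $k$,
\[
|P_k(0,\psi)|\;=\;\frac{2\,|\phi_{2k}(i,\sigma)|}{\sqrt{2\pi\bigl(1-\gamma_{2k-1}\bigr)}}\;\ge\;\frac{|\phi_{2k}(i,\sigma)|}{\sqrt{\pi}}\,,
\]
using $1-\gamma_{2k-1}\le 2$. After restricting $\{k_n\}$ in (iii) to even integers and picking $\widetilde\beta_n$ so that $\widetilde\beta_{k_n}/\sqrt{\pi}\ge\beta_{k_n}$, the desired lower bound $|P_{k_n}(0,\psi)|\gtrsim\beta_{k_n}\sqrt{k_n}$ follows.

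The main obstacle lies in the first step: verifying that the iterative perturbation scheme from the proof of Theorem~\ref{rrra-i} survives the imposition of both $OX$- and $OY$-symmetry at every stage while still producing the growth at $z=\pm i$. Both ingredients are sketched in the Remark following that proof (symmetrization of the perturbations and the $N=2$ dilation), but one must check that the symmetries are compatible with the $L^p$ control and with the appeal to the localization principle from Theorem~\ref{lemma21}. Once this is secured, the rest reduces to the direct computation via \eqref{real-line} above.
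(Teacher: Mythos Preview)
Your proposal is correct and follows essentially the same route as the paper: produce a doubly-symmetric Steklov weight on $\mathbb T$ whose orthonormal polynomials are large at $z=i$ along an arithmetic-divisibility-controlled subsequence, and then read off the growth of $P_k(0,\psi)$ from \eqref{real-line}. The only cosmetic differences are that the paper manufactures the growth at $i$ by rotating a doubly-symmetric $\sigma^*$ with growth at $z=1$ through $\pi/2$ (taking $k_n$ divisible by $4$) rather than by your $N=2$ dilation from growth at $z=-1$, and it bounds the denominator via $\gamma_n\to 0$ (Szeg\H{o}) rather than your cruder $|\gamma_{2k-1}|<1$; also, your formula for $\psi'$ carries an extraneous factor of $2$ (it should be $\psi'(x)=\sigma'(\arccos x)/\sqrt{1-x^2}$), which is harmless once you adjust $\delta'$.
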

\begin{proof}
Indeed, in the Theorem \ref{rrra-i} we can take $\sigma^*:
d\sigma^*=w^*d\theta$ to be symmetric with respect to both axis,
i.e., $w^*(2\pi-\theta)=w^*(\theta)$ (symmetry with respect to $OX$)
and $w^*(\theta)=w^*(\pi-\theta)$ (symmetry with respect to $OY$).
Moreover, we can always arrange for all $\{k_n\}$ to be divisible
by~$4$ and
\[
|\phi_{k_n}(1,\sigma^*)|\geq \beta_{k_n}\sqrt{k_n}
\]
Now, we take $\sigma: d\sigma=w^*(\theta-\pi/2)d\theta$, i.e., the
rotation of $\sigma^*$ by $\pi/2$ and apply \eqref{trans} to it. The
symmetries of $\sigma^*$ yield the symmetry of $\sigma$ with respect
to $OX$ so this transform is applicable. Notice that
$\phi_n(z,\sigma)=e^{in\pi/2}\phi_n(ze^{-i\pi/2},\sigma^*)$ where
the first factor is introduced to make the leading coefficient
positive. Also, notice that $\phi_n^*(1,\sigma^*)$ is real-valued so
$ \phi_n(1,\sigma^*)=\phi_n^*(1,\sigma^*) $. We have
\[
\phi_{k_n}(i,\sigma)+\phi^*_{k_n}(i,\sigma)=\phi_{k_n}(i,\sigma)+\overline{\phi_{k_n}(i,\sigma)}=
\]
\[
\phi_{k_n}(1,\sigma^*)+\overline{\phi_{k_n}(1,\sigma^*)}=2\phi_{k_n}(1,\sigma^*)\geq
2\beta_{k_n}\sqrt{k_n}
\]

 Now, notice that
\[
\phi_{2k}(0,\sigma)/\lambda_{2k}=\Phi_{2k}(0,\sigma)=-\overline{\gamma}_{2k-1}\,\,.
\]
By the Szeg\H{o} sum rule, $\gamma_n\in \ell^2$ and so
$\lim_{n\to\infty}\gamma_n= 0$. Thus, \eqref{real-line} gives
\[
|P_{k_n/2}(0,\psi)|\gtrsim \beta_{k_n} \sqrt{{k_n}}
\]
and, after redefining $k_n$, this is exactly \eqref{est-ra-i-r}. For
the derivative of $\psi$, we have
\[
\psi'(\cos\theta)=\frac{\sigma'(\theta)}{|\sin\theta|}>\frac{\delta_1/(2\pi)}{|\sin
\theta|}>\delta_1/(2\pi)=\delta
\]
if $\delta_1=2\pi\delta$.
\end{proof}
{\bf Remark.} The original conjecture of Steklov was formulated in
terms of the weights (i.e., the unit ball in $L^1(\mathbb T$)) and
we solved it in that form. However, as the results on  maximizers
from  the first part of the paper suggest, the class of probability
measures is far more natural for that setting.
\bigskip\bigskip

\section{ The polynomial entropies and the Steklov class.}
In recent years, a lot of efforts were made (see, e.g., \cite{ap1,
ap2, ap3}) to study the so-called polynomial entropy
\[
\int_\mathbb T |\phi_n|^2\ln|\phi_n|d\sigma
\]
where $\phi_n$ are orthonormal with respect to $\sigma$.
Since $\smash{\sup\limits_{x\in [0,1]}} x^2|\ln x|<\infty$, this quantity is bounded if and only if
\[
\int_\mathbb T
|\phi_n|^2\ln^+|\phi_n|d\sigma
\]
is bounded. The last expression is important as it contains the information on the size of $\phi_n$.
In this section, we consider the following variational problem
\[
\Omega_n(\cal K)=\sup_{\sigma\in \cal{K}} \int_\mathbb T
|\phi_n|^2\ln^+|\phi_n|d\sigma
\]
where $\phi_n$ is the $n$-th orthonormal polynomial with respect to
$\sigma$ taken in $\cal K$, some special class of measures. It is an
interesting question to describe those $\cal{K}$ for which
$\Omega_n(\cal{K})$ is bounded in $n$. So far, this is known only
for very few $\cal{K}$, e.g., the Baxter class of measures. For the
Szeg\H{o} class with measures normalized by the $\ell^2$ norm of
Schur parameters, the sharp estimate $\Omega_n\sim \sqrt n$ is known
\cite{dk}. In this section, we will obtain the sharp bound on
$\Omega_n(S_\delta)$.
\begin{lemma}
If $\delta\!\in\!(0,1)$, then
\[
\Omega_n(S_\delta)\sim \ln n\,\,.
\]
\end{lemma}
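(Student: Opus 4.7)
The plan is to establish the two matching bounds separately, each as a short consequence of results already proved in the paper.

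\textbf{Upper bound.} First I would observe that for any $\sigma\in S_\delta$, Lemma \ref{ots-sv} gives the pointwise estimate
\[
\|\phi_n(\cdot,\sigma)\|_{L^\infty(\mathbb T)}\le\sqrt{(n+1)/\delta},
\]
so $\ln^+|\phi_n(e^{i\theta},\sigma)|\le\tfrac12\ln\bigl((n+1)/\delta\bigr)\lesssim\ln n$. Since $\|\phi_n\|_{L^2(\sigma)}=1$, integrating against $d\sigma$ yields
\[
\int_{\mathbb T}|\phi_n|^2\ln^+|\phi_n|\,d\sigma\le\tfrac12\ln\bigl((n+1)/\delta\bigr)\lesssim\ln n,
\]
uniformly over $S_\delta$.

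\textbf{Lower bound.} Here I would use Theorem~\ref{T3-i} (after rotation so the $L^\infty$ norm is achieved at $z=1$) to produce a measure $\sigma\in S_\delta$ with $|\phi_n(1,\sigma)|\ge C(\delta)\sqrt n$. The next step is to transfer this pointwise lower bound to an arc of length of order $1/n$ around $\theta=0$. For that I would invoke Bernstein's inequality on $\mathbb T$: for any polynomial $P$ of degree $n$,
\[
|P(e^{i\theta})-P(1)|\le n\|P\|_{L^\infty(\mathbb T)}\,|\theta|.
\]
Combined with $\|\phi_n\|_{L^\infty(\mathbb T)}\le\sqrt{(n+1)/\delta}$, this gives $|\phi_n(e^{i\theta},\sigma)|\ge\tfrac12 C(\delta)\sqrt n$ for $|\theta|\le c(\delta)/n$ with $c(\delta)$ chosen sufficiently small.

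\textbf{Combining.} On this arc $I=[-c/n,c/n]$ we then have simultaneously $|\phi_n|^2\gtrsim n$ and $\ln^+|\phi_n|\gtrsim\ln n$. Using the Steklov condition $\sigma'\ge\delta/(2\pi)$, which forces $\sigma(I)\ge(c\delta)/(\pi n)$, we obtain
\[
\int_{\mathbb T}|\phi_n|^2\ln^+|\phi_n|\,d\sigma\ge\int_{I}|\phi_n|^2\ln^+|\phi_n|\,\sigma'\,d\theta\gtrsim n\cdot\ln n\cdot\frac1n=\ln n,
\]
which is the desired lower bound.

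There is no serious obstacle: both directions are two-line deductions once Lemma~\ref{ots-sv} and Theorem~\ref{T3-i} are in hand. The only (routine) point to watch is that the measure produced by Theorem~\ref{T3-i} should satisfy $\sigma'\ge\delta/(2\pi)$ almost everywhere so that Steklov's condition gives the lower bound $\sigma(I)\gtrsim1/n$ used in the last display; this is guaranteed by the construction, as noted in the remark following Theorem~\ref{rrra-i}. If one preferred to avoid even this remark, Bernstein's inequality applied to the first derivative could be replaced by the localization/continuity argument implicit in the proof of Theorem~\ref{lemma21}, but no new ideas are required.
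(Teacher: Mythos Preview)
Your argument is correct. Both the upper and lower bounds go through as you describe; in particular the Bernstein step is sound (with $\|\phi_n\|_\infty\le\sqrt{(n+1)/\delta}$ you get $|\phi_n(e^{i\theta})-\phi_n(1)|\le n\sqrt{(n+1)/\delta}\,|\theta|$, which is at most $\tfrac12 C(\delta)\sqrt n$ for $|\theta|\le c(\delta)/n$), and the Steklov condition $\sigma'\ge\delta/(2\pi)$ a.e.\ already gives $\sigma(I)\ge(\delta/2\pi)|I|$ for \emph{any} $\sigma\in S_\delta$, so no extra remark is needed there.

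The route differs from the paper's. The paper does not use Bernstein's inequality or the abstract statement of Theorem~\ref{T3-i}; instead it goes back to the explicit construction in the proof of Theorem~\ref{T3}, locates a specific interval $[0.01n^{-1},\theta_1-0.01n^{-1}]$ (with $\theta_1$ the first zero of $\Psi+\sin(n\theta-2\phi)$), and uses the detailed estimates established there (namely $|f_n|\sim|Q_m|$ on that interval and $|Q_m|^2\gtrsim m$) to conclude $|\phi_n|\sim\sqrt n$ on an arc of length $\sim n^{-1}$. Your argument is more self-contained and black-box: it needs only the \emph{existence} of a measure achieving $\|\phi_n\|_\infty\gtrsim\sqrt n$ together with the uniform upper bound, and then Bernstein does the localization. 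The paper's approach avoids importing Bernstein but requires unpacking the construction; yours is shorter and would apply equally well to any extremizing measure without knowing its structure.
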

\begin{proof}
If one takes the measure $\sigma$ and the polynomial $\phi_n$
constructed in the proof of the Theorem~\ref{T3-i}, then
\[
\Omega_n(S_\delta)\gtrsim
\int_{\mathbb T}|\phi_n|^2\ln^+|\phi_n|d\theta\gtrsim 1+
\int_{0.01n^{-1}}^{\theta_1-0.01n^{-1}} |f_n|^2\ln^+|f_n|d\theta
\]
where $\theta_1$ was introduced in this proof. On that interval,
$|\Psi(\theta)+\sin(n\theta-2\phi)|>C$ and so $|f_n|\sim |Q_m|$.
This follows from \eqref{efn} and the verification of the
normalization condition. Then, the expression \eqref{uh-uh} gives a
very rough lower bound
\[
|Q_m(e^{i\theta})|^2\gtrsim  \frac{m}{m^2\theta^2+1}+1\,\,.
\]
This shows $|Q_m|\sim \sqrt n$ on the interval $(0.01n^{-1},\theta_1-0.01n^{-1})$ and so
\[
\Omega_n(S_\delta)\gtrsim n^{-1}\cdot n\cdot \ln n\sim \ln n\,\,.
\]
Therefore, the polynomial entropy grows at least as the logarithm.
On the other hand, the trivial upper bound $
\|\phi_n\|_\infty\lesssim \sqrt n $ implies that $
\Omega_n(S_\delta)\lesssim \ln n $.
\end{proof}
\vspace{0.5cm}

{\Large\part*{Some open problems}}\bigskip

In conclusion, we want to discuss some interesting problems we
didn't address.\bigskip

\begin{enumerate}
\item{In the variational problem for $M_{n,\delta}$, it would be interesting to know whether
the maximizer is unique and how many mass points it possesses.
Ideally, one would  want to find it explicitly. At the moment, very
little is know about the maximizers $\mu^\ast$ in
Theorem~\ref{spec-forma}. In \cite{den-pams}, it was proved that
$N$, the number of point masses, is of order $n$.

}\smallskip

\item{Suppose that $\phi_n(z,\sigma)$ is the orthonormal polynomial, $\sigma\in S_\delta$, and
$|\phi_n(1,\sigma)|>C\sqrt n$. What is the behavior of the Schur
parameters $\{\gamma_j(\sigma)\}$? This question is interesting as
its answer can give a ``difference equation perspective" to the
problem. To this end, one only needs to find the coefficients of the
Szeg\H{o} recursion (Schur parameters) such that
\[
\limsup_{j\to\infty}
\|\phi_j(e^{i\theta},\sigma)\|_{L^\infty(\mathbb T)}<C
\]
(which is equivalent to the Steklov condition if $\sigma$ is
regular) but
\[\|\phi_n(e^{i\theta},\sigma)\|_{L^\infty(\mathbb T)}\sim \sqrt n\,\,.\]
for the fixed arbitrarily large $n$. }\smallskip

\item{For the following variational problem
\[
M_{n,\delta_1,\delta_2}=\sup_{\delta_1/(2\pi)\le
\sigma'\le\delta_2/(2\pi)}
\|\phi_n(e^{i\theta},\sigma)\|_{L^\infty(\mathbb{T})}
\]
find the sharp estimates for $M_{n,\delta_1,\delta_2}$ as
$n\to\infty$.  }
\end{enumerate}

\vspace{0.5cm}

{\Large \part*{Acknowledgement.}} The research of S.D. was supported
by NSF grant DMS-1067413. The research of A.A. and D.T. was
supported by the grants RFBR 13-01-12430 OFIm and RFBR 11-01-00245
and Program~1 DMS RAS. The hospitality of IMB (Bordeaux) and IHES
(Paris) is gratefully acknowledged by S.D. The authors thank Stas
Kupin, Fedor Nazarov, and Evguenii Rakhmanov for interesting
comments.\vspace{1cm}

\part*{Appendix A.}\bigskip

In this Appendix, we start by introducing the polynomials that
approximate the function $(1-z)^{-\alpha/2}$ (used in the formula
\eqref{fact}) and the function $(1-z)^{1-\alpha}$ (used in the
definition of $P_m$, formula \eqref{ppp}). These polynomials are
well studied (see, e.g., \cite{zygmud}, chapter 5) but we deduce the
necessary estimates here for completeness of exposition.  Notice
first, that $(1-z)^\beta$ is analytic in $\mathbb D$ and has
positive real part for any $\beta\!\in\!(-1,1)$. For
$z=e^{i\theta}\!\in\!\mathbb T$, we have
\[
(1-z)^\beta=\Bigl((1-\cos\theta)^2+\sin^2\theta\Bigr)^{\beta/2}\exp(-i\beta
L(\theta))=\big(2\sin\frac{|\theta|}2\big)^\beta\exp(-i\beta L(\theta))
\]
where
\[L(\theta)=
\arctan\Big(\frac{\sin\theta}{1-\cos\theta}\Big)=\arctan\big(\!\cot\tfrac{\textstyle\theta}{\textstyle2\vphantom]}
\big)\]
and so
\begin{equation}\label{chisto}
(1-z)^\beta=|\theta|^{\beta}(1+O(\theta^2))\exp(-i\beta L(\theta))\,,\quad\theta\to0~;\quad
L(\theta)=\operatorname{sign}(\theta)\frac{\pi-|\theta|}2~.
\end{equation}
We will now introduce the polynomials that  approximate
$(1-z)^\beta$ uniformly on compacts in $\mathbb D$ and behave on the
boundary in a controlled way. We will treat the cases of positive
and negative $\beta$ separately. Let $A_n(z)$ be the $n$-th Taylor
polynomial of $(1-z)^{\beta}$ with positive $\beta$, i.e.,
\[
A_n(z)=1-\sum^n_{j=1}c_j z^j~,\quad
c_j=c_j(\beta)=\frac{\beta(1\!-\!\beta)\ldots(j\!-\!1\!-\!\beta)}{j!}\,\,.
\]
The polynomial $R_{(n,-(1-\alpha))}$ in the main text will be taken as $A_n$ with $\beta=1-\alpha\in(0,1/2)$.

For $B_n(z)$, we choose $n$--th Taylor coefficient of
$(1-z)^{-\beta}$ with positive $\beta$, i.e.,
\[
B_n(z)=1+\sum^n_{j=1}d_j z^j
\]
and
\begin{equation}\label{d_j}
d_j=d_j(\beta)=
\frac{\beta(\beta\!+\!1)\ldots(\beta\!+\!j\!-\!1)}{j!}=\frac{j^{\beta-1}}{\Gamma(\beta)}+O(j^{\beta-2})\,\,.
\end{equation}
The polynomial $R_{(n,\alpha/2)}$ used in the main text is $B_n$ with $\beta=\alpha/2\in(1/4,1/2)$.

We need the following simple Lemmas.
\begin{lemma}\label{trifle}
For any $a>0$, we have
\[
\int^a_0\frac{\cos x}{x^\gamma}dx>0,{\rm~if}\quad\gamma\!\in\![1/2,1)
\]
and
\[
\int^a_0\frac{\sin x}{x^\gamma}dx>0,~\int^\infty_0\frac{\sin
x}{x^\gamma}dx>0,{\rm~if}\quad\gamma\!\in\!(0,1)\,\,.
\]
\end{lemma}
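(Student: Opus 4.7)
The plan is to treat the sine and cosine integrals separately; the cosine case is the only nontrivial one.

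For the sine integrals, set $I_j = \int_{j\pi}^{(j+1)\pi} \sin x\cdot x^{-\gamma}\,dx$. The substitution $x\mapsto x+\pi$ on the negative half-periods gives
\[
I_{2l}+I_{2l+1}=\int_0^\pi \sin y\bigl[(y+2l\pi)^{-\gamma}-(y+(2l+1)\pi)^{-\gamma}\bigr]dy>0.
\]
Since $h(a):=\int_0^a\sin x\cdot x^{-\gamma}dx$ has $h'(a)=\sin a\cdot a^{-\gamma}$, its local minima lie exactly at $a=2k\pi$, $k\ge1$, and $h(2k\pi)=\sum_{l=0}^{k-1}(I_{2l}+I_{2l+1})$; between the minima $h$ stays above its endpoint values, so $h(a)>0$ for every $a>0$. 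The case $a=\infty$ is identical: $\int_0^\infty\sin x\cdot x^{-\gamma}dx=\sum_{l\ge0}(I_{2l}+I_{2l+1})$ is a sum of positive terms.

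For the cosine integral with $\gamma\in[1/2,1)$, I first reduce to a single numerical inequality. The function $g(a):=\int_0^a\cos x\cdot x^{-\gamma}dx$ has local minima at $a_k:=3\pi/2+2(k-1)\pi$, $k\ge1$. The identity $\cos(y+a_k)=\sin y$ yields
\[
g(a_{k+1})-g(a_k)=\int_0^{2\pi}\sin y\cdot(y+a_k)^{-\gamma}dy>0
\]
by the same half-period pairing used above. Thus it suffices to prove $g(3\pi/2)>0$.

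For this, integration by parts gives
\[
g(3\pi/2)=-\frac{1}{(3\pi/2)^\gamma}+\gamma\int_0^{3\pi/2}\frac{\sin x}{x^{\gamma+1}}dx.
\]
Splitting $(0,3\pi/2)$ into $(0,\pi/2)$, $(\pi/2,\pi)$, $(\pi,3\pi/2)$ and applying $x\mapsto\pi-x$ on the middle piece and $x\mapsto x-\pi$ on the last rewrites the integral as
\[
\int_0^{\pi/2}\sin x\Bigl[x^{-\gamma-1}+(\pi-x)^{-\gamma-1}-(\pi+x)^{-\gamma-1}\Bigr]dx,
\]
which exceeds $\int_0^{\pi/2}\sin x\cdot x^{-\gamma-1}dx$ since $\pi-x<\pi+x$. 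Jordan's inequality $\sin x\ge 2x/\pi$ on $[0,\pi/2]$ then bounds this below by $\frac{2}{\pi(1-\gamma)}(\pi/2)^{1-\gamma}$, and the desired $g(3\pi/2)>0$ reduces after simplification to
\[
\frac{\gamma\cdot 3^\gamma}{1-\gamma}>1,
\]
which equals $\sqrt3$ at $\gamma=1/2$ and is increasing on $[1/2,1)$. The main delicacy is precisely the threshold $\gamma=1/2$: below this value $g(3\pi/2)$ actually turns negative, and the Jordan bound (which only just beats $1$ at $\gamma=1/2$) is essentially tight.
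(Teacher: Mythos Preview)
Your proof is correct and follows essentially the same route as the paper's: the sine case via half-period pairing, the cosine case reduced to showing $\int_0^{3\pi/2}\cos x\,x^{-\gamma}dx>0$, then integration by parts, discarding the (positive) contribution from $[\pi/2,3\pi/2]$, and using $\sin x\ge (2/\pi)x$ on $[0,\pi/2]$ to arrive at the equivalent numerical inequality $\gamma/(1-\gamma)>3^{-\gamma}$. The only cosmetic difference is that the paper states the reduction as $\int_{3\pi/2}^a\cos x\,x^{-\gamma}dx>0$ directly rather than via the increments $g(a_{k+1})-g(a_k)$.
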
\begin{proof}
The inequalities with $\sin$ are elementary as $x^{-\gamma}$ decays and $\sin x$ satisfies
\[\sin(\pi+x)=-\sin x;\quad\sin x>0,\,x\!\in\!(0,\pi)\]
For the first inequality, we notice that
\[
\int_{3\pi/2}^a\frac{\cos x}{x^\gamma}dx>0
\]
for any $a>3\pi/2$ and we only need to show that
\[
\int_0^{3\pi/2}\frac{\cos x}{x^\gamma}dx>0\,\,.
\]
Integrating by parts we have
\[
\int_0^{3\pi/2} \frac{\cos
x}{x^\gamma}dx=-\left(\frac2{3\pi}\right)^\gamma+\gamma\int_0^{3\pi/2}\frac{\sin
x}{x^{\gamma+1}}dx>-\left(\frac2{3\pi}\right)^\gamma+\frac{2\gamma}\pi
\int_0^{\pi/2}x^{-\gamma}dx
\]
where  we dropped the integral over $[\pi/2,3\pi/2]$ in the last
inequality and used the fact that $x^{-1}\sin x$ decays
monotonically on $[0,\pi/2]$. Calculating the integral, we get
\[
\frac{2\gamma}{\pi(1-\gamma)}\left(\frac{\pi}{2}\right)^{1-\gamma}-\left(\frac2{3\pi}\right)^\gamma=
\Big(\frac2\pi\Big)^\gamma\Big(\frac\gamma{1-\gamma}-3^{-\gamma}\Big)>0
\]
for $\gamma\!\in\![1/2,1)$.\end{proof}

Let us first study the properties of $B_n$. As $B_n$ is the Taylor expansion of $(1-z)^{-\beta}$ and
$\beta\!\in\!(0,1/2)$, we have the uniform convergence $B_n(z)\!\to\!(1-z)^{-\beta}$ in
$\{|z|\le 1\}\cap\{|1-z|>1-\upsilon\}$ for any fixed $\upsilon\!>\!0$ as long as $n\!\to\!\infty$.
Indeed, due to monotonicity of $d_j$ we have
$${\textstyle\sum\limits^n_{j=1}}|d_{j+1}-d_j|=|d_{n+1}-d_1|\,\,.$$
Then, the Abel's transform yields the uniform convergence.

We now take $z=e^{i\theta}$ with $\theta\in(-\upsilon,\upsilon)$ where $\upsilon$ is small.\smallskip

We will need to use the following approximations by the integrals. Let $\gamma\in (0,1)$.
\begin{equation}\label{repa1}
\int\limits^n_1\frac{\cos(x\theta)}{x^\gamma}dx=\sum_{j=1}^{n-1}\int\limits_j^{j+1}
\frac{\cos(x\theta)}{x^\gamma}dx=\sum_{j=1}^{n-1}\frac1{j^\gamma}\int\limits_j^{j+1}{\cos(x\theta)}dx+
\sum_{j=1}^{n-1}\int\limits_j^{j+1}\cos(x\theta)\left(\frac1{x^\gamma}-\frac1{j^\gamma}\right)dx\,\,.
\end{equation}
Since
\begin{equation}\label{motr}
\max_{x\in[j,j+1]}|x^{-\gamma}-j^{-\gamma}|\lesssim j^{-\gamma-1}
\end{equation}
the second term is $O(1)$ uniformly in $\theta$ and $n$ and that gives
\begin{gather*}
\int^n_1\frac{\cos(x\theta)}{x^\gamma}dx=O(1)+\sum_{j=1}^{n-1}\frac1{j^\gamma}
\frac{\sin(\theta/2)}{\theta/2}\cos(j\theta+\theta/2)=\\=O(1)+\sum_{j=1}^{n-1}\frac1{j^\gamma}
\frac{\sin(\theta/2)}{\theta/2}\Bigl(\cos(j\theta)\cos(\theta/2)-\sin(j\theta)\sin(\theta/2)\Bigr)\,\,.
\end{gather*}
Similarly
\begin{equation}\label{repa2}
\int\limits^n_1\frac{\sin(x\theta)}{x^\gamma}dx=\sum_{j=1}^{n-1}\int\limits_j^{j+1}
\frac{\sin(x\theta)}{x^\gamma}dx=\sum_{j=1}^{n-1}\frac1{j^\gamma}\int\limits_j^{j+1}{\sin(x\theta)}dx+
\sum_{j=1}^{n-1}\int\limits_j^{j+1}\sin(x\theta)\left(\frac1{x^\gamma}-\frac1{j^\gamma}\right)dx\,\,.
\end{equation}
By \eqref{motr}, the second term is $o(1)$ as $\theta\!\to\!0$,
uniformly in $n$. Therefore, we have
\[
\int^n_1\frac{\sin(x\theta)}{x^\gamma}dx=o(1)+\sum_{j=1}^{n-1}\frac1{j^\gamma}\int_j^{j+1}
\sin(x\theta)\,dx=
\]
\[
o(1)+\sum_{j=1}^{n-1} \frac1{j^\gamma}
\frac{\sin(\theta/2)}{\theta/2}\sin(j\theta+\theta/2)=
\]
\[
o(1)+\sum_{j=1}^{n-1} \frac1{j^\gamma}
\frac{\sin(\theta/2)}{\theta/2}\Bigl(\sin(j\theta)\cos(\theta/2)+\cos(j\theta)\sin(\theta/2)\Bigr)\,\,.
\]
Above, $O(1)$ and $o(1)$ are written for $\theta\to 0$ and they are
uniform in $n$. Now, representations \eqref{repa1} and \eqref{repa2}
yield the formulas for
\[
\sum_{j=1}^{n-1}\frac{\cos(j\theta)}{j^\gamma},\quad\sum_{j=1}^{n-1}\frac{\sin(j\theta)}{j^\gamma}
\]
i.e.,
\begin{equation}\label{sin-a}
\sum_{j=1}^{n-1}\frac{\cos(j\theta)}{j^\gamma}=O(1)+C_{11}(\theta)\int^n_1\frac{\cos(x\theta)}{x^\gamma}dx+
C_{12}(\theta)\int^n_1\frac{\sin(x\theta)}{x^\gamma}dx
\end{equation}
and
\begin{equation}\label{cos-a}
\sum_{j=1}^{n-1}\frac{\sin(j\theta)}{j^\gamma}=o(1)+C_{21}(\theta)\int^n_1\frac{\sin(x\theta)}{x^\gamma}dx+
C_{22}(\theta)\int^n_1\frac{\cos(x\theta)}{x^\gamma}dx
\end{equation}
where $C_{11}\!\to\!1,~C_{12}\!\to\!0,~C_{21}\!\to\!1,~C_{22}\!\to\!0$ as $\theta\!\to\!0$ uniformly in $n$.\smallskip

Now we are ready for the next Lemma.
\begin{lemma}\label{poly2}
Let $\beta\!\in\!(0,1/2)$ and $\upsilon$ is sufficiently small fixed positive number, then
\[
\Re B_n(e^{i\theta})\sim(n^{-1}+|\theta|)^{-\beta},\quad\theta\!\in\!(-\upsilon,\upsilon)
\]
and\begin{alignat*}2
&\frac{\Im B_n(e^{i\theta})}{\operatorname{sign}(\theta)}\sim|\theta|^{-\beta},&\quad&0.01n^{-1}<|\theta|<\upsilon,\\
\quad&\frac{\Im B_n(e^{i\theta})}{\theta}\sim n^{1+\beta},&&|\theta|<0.01n^{-1}\,\,.
\end{alignat*}\end{lemma}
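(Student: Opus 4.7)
The plan is to reduce estimates on $\Re B_n(e^{i\theta})$ and $\Im B_n(e^{i\theta})$ to evaluating the integrals
\[
I_c(\theta)=\int_1^n\frac{\cos(x\theta)}{x^\gamma}\,dx,\qquad I_s(\theta)=\int_1^n\frac{\sin(x\theta)}{x^\gamma}\,dx
\qquad(\gamma=1-\beta\in(1/2,1))
\]
via the already-established formulas \eqref{sin-a}--\eqref{cos-a}. Writing $d_j=j^{\beta-1}/\Gamma(\beta)+O(j^{\beta-2})$ and using that $\sum_j j^{\beta-2}<\infty$, we obtain
\[
\Re B_n(e^{i\theta})=O(1)+\frac1{\Gamma(\beta)}\sum_{j=1}^{n-1}\frac{\cos(j\theta)}{j^\gamma},
\qquad
\Im B_n(e^{i\theta})=O(1)+\frac1{\Gamma(\beta)}\sum_{j=1}^{n-1}\frac{\sin(j\theta)}{j^\gamma},
\]
and the substitution of \eqref{sin-a}--\eqref{cos-a} expresses the right-hand sides as linear combinations of $I_c$ and $I_s$ with coefficients $C_{ij}(\theta)$ satisfying $C_{11},C_{21}\to1$ and $C_{12},C_{22}\to0$ as $\theta\to0$ (uniformly in $n$). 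I will then carry out the change of variable $y=x|\theta|$ to get
\[
I_c(\theta)=|\theta|^{\gamma-1}\!\!\!\int_{|\theta|}^{n|\theta|}\frac{\cos y}{y^\gamma}\,dy,\qquad
I_s(\theta)=\operatorname{sign}(\theta)\,|\theta|^{\gamma-1}\!\!\!\int_{|\theta|}^{n|\theta|}\frac{\sin y}{y^\gamma}\,dy.
\]

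I then split into the two regimes displayed in the statement. For $|\theta|<0.01\,n^{-1}$ the upper limit $n|\theta|$ is small, so $\cos y=1+O(y^2)$ and $\sin y=y+O(y^3)$ give
\[
I_c=\frac{n^{1-\gamma}}{1-\gamma}(1+o(1))\sim n^{\beta},\qquad
I_s=\operatorname{sign}(\theta)\,\frac{|\theta|\,n^{2-\gamma}}{2-\gamma}(1+o(1))\sim \theta\, n^{1+\beta},
\]
both with positive leading constants, which (combined with $C_{12}I_s,\,C_{22}I_c$ being lower-order, since $C_{12},C_{22}=O(\theta)$) yields $\Re B_n\sim n^{\beta}$ and $\Im B_n/\theta\sim n^{1+\beta}$, matching $(n^{-1}+|\theta|)^{-\beta}\sim n^\beta$ in this range.

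For $0.01\,n^{-1}<|\theta|<\upsilon$ the main task is to show that each of $\int_{|\theta|}^{n|\theta|}\cos y/y^\gamma\,dy$ and $\int_{|\theta|}^{n|\theta|}\sin y/y^\gamma\,dy$ is bounded \emph{above and below by positive constants} independent of $n$ and $\theta$ in this range. The upper bound is routine (integration by parts after $\pi/2$ and direct estimate of the initial segment). The \emph{main obstacle}---obtaining a positive lower bound uniform in $n\theta\in[0.01,\infty)$---is handled by Lemma~\ref{trifle}: writing $\int_{|\theta|}^{n|\theta|}=\int_0^{n|\theta|}-\int_0^{|\theta|}$, both integrands give positive values by Lemma~\ref{trifle}, and since $\int_0^{|\theta|}\cos y/y^\gamma\,dy=O(|\theta|^{1-\gamma})=o(1)$ as $\upsilon\to0$, while $\int_0^{a}\cos y/y^\gamma\,dy$ is continuous in $a$, positive for all $a>0$, and converges to the positive constant $\Gamma(1-\gamma)\sin(\pi\gamma/2)$ as $a\to\infty$, it attains a positive minimum on $[0.01,\infty)$; the same argument applies to the sine integral using $\int_0^\infty\sin y/y^\gamma\,dy=\Gamma(1-\gamma)\cos(\pi\gamma/2)>0$. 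Consequently $|I_c|,|I_s|\sim|\theta|^{-\beta}$, which after unpacking the formulas \eqref{sin-a}--\eqref{cos-a} (the terms involving $C_{12},C_{22}=O(\theta)$ contribute only $O(\theta^{1-\beta})$ and are absorbed) gives $\Re B_n(e^{i\theta})\sim|\theta|^{-\beta}$ and $\Im B_n(e^{i\theta})/\operatorname{sign}(\theta)\sim|\theta|^{-\beta}$, completing the proof after shrinking $\upsilon$ so that the error terms are safely dominated by the main terms.
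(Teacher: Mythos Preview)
Your approach is essentially the same as the paper's: reduce to the sums $\sum j^{-\gamma}\cos(j\theta)$ and $\sum j^{-\gamma}\sin(j\theta)$ via the asymptotic $d_j=j^{\beta-1}/\Gamma(\beta)+O(j^{\beta-2})$, then invoke \eqref{sin-a}--\eqref{cos-a} and Lemma~\ref{trifle} to control the resulting integrals. The argument for the intermediate range $0.01\,n^{-1}<|\theta|<\upsilon$ is correct and matches the paper.

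There is, however, a genuine gap in your treatment of the small-$\theta$ regime $|\theta|<0.01\,n^{-1}$ for the imaginary part. You write
\[
\Im B_n(e^{i\theta})=O(1)+\frac1{\Gamma(\beta)}\sum_{j=1}^{n-1}\frac{\sin(j\theta)}{j^\gamma},
\]
justifying the $O(1)$ by $\sum_j j^{\beta-2}<\infty$. But the target estimate is $\Im B_n\sim\theta\, n^{1+\beta}$, and for $|\theta|$ much smaller than $n^{-1}$ (say $\theta=n^{-2}$) this quantity is $o(1)$; your $O(1)$ error then swamps the main term and you cannot conclude $\Im B_n/\theta\sim n^{1+\beta}$. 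The same objection applies to the $o(1)$ hidden in \eqref{cos-a} unless you quantify it.

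The paper avoids this by treating the case $|\theta|<0.01\,n^{-1}$ directly from the definition, using $\sin(j\theta)/(j\theta)\sim 1$ for $j\le n$, so that
\[
\Im B_n=\sum_{j=1}^n d_j\sin(j\theta)\sim\theta\sum_{j=1}^n jd_j\sim\theta\, n^{1+\beta}.
\]
Your route can also be repaired: in this range $|\sin(j\theta)|\le j|\theta|$, so the error from the $O(j^{\beta-2})$ correction is actually $O\bigl(|\theta|\sum_{j\le n} j^{\beta-1}\bigr)=O(|\theta|\,n^\beta)$, and similarly the $o(1)$ in \eqref{cos-a} is $O(|\theta|\,n^\beta)$; both are $O(n^{-1})$ relative to the main term $\theta\,n^{1+\beta}$. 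Either fix closes the gap.
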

\begin{proof}
The case $|\theta|<0.01n^{-1}$ follows from \eqref{d_j} since
$\cos(j\theta)\sim 1$ and $\sin(j\theta)/(j\theta)\sim 1$. For the
other $\theta$, we first notice that it is sufficient to consider
$\theta\in (0.01n^{-1},\upsilon)$ and that \eqref{d_j} gives
\[
B_n(e^{i\theta})=1+\frac1{\Gamma(\beta)}\left(\sum^n_{j=1}j^{-1+\beta}e^{i\theta j}+O(1)\right)\,\,.
\]
Let $\gamma=1-\beta\in (1/2,1)$ and use the formulas \eqref{sin-a} and \eqref{cos-a}. Notice that
\[
\int^n_1\frac{\cos(x\theta)}{x^\gamma}dx=\theta^{\gamma-1}\int_\theta^{n\theta}\frac{\cos
t}{t^\gamma}dt\sim \theta^{\gamma-1}\quad({\rm any}~\gamma\!\in\!(1/2,1))
\]
as long as $\theta\!\in\!(0.01n^{-1},\upsilon)$. That follows from
the Lemma \ref{trifle}. The last estimate is valid for sufficiently
small $\upsilon$. Indeed, taking $\cal F$ as
$${\cal F}(t)=\int_0^t\frac{\cos u}{u^\gamma}du~,$$
we get the following bounds
$$
\int_\theta^{n\theta}\frac{\cos t}{t^\gamma}dt={\cal
F}(n\theta)-{\cal F}(\theta)\ge\min_{[0.01,\infty]}{\cal
F}-\max_{[0,\upsilon]}{\cal F}\ge \operatorname{const}>0\,.
$$
Similarly
\[
\int^n_1\frac{\sin(x\theta)}{x^\gamma}dx=\theta^{\gamma-1}\int_\theta^{n\theta}\frac{\sin
t}{t^\gamma}dt\sim\theta^{\gamma-1}\quad({\rm any}~\gamma\!\in\!(0,1))\,\,.
\]

That finishes the proof.\end{proof}
\begin{lemma}\label{derider}
For any $\beta\!\in\!(0,1)$, we have
\[
|B_n'(e^{i\theta})|\lesssim\left\{\begin{array}{cc}
|\theta|^{-1}n^\beta,&|\theta|>n^{-1}\\
n^{1+\beta},&|\theta|<n^{-1}
\end{array}\right.
\]
\[
|B''_n(e^{i\theta})|\lesssim\left\{\begin{array}{cc}
|\theta|^{-1}n^{\beta+1},&|\theta|>n^{-1}\\
n^{2+\beta},&|\theta|<n^{-1}
\end{array}\right.
\]
where the derivative is taken in $\theta\!\in\!(-\upsilon,\upsilon)$.
\end{lemma}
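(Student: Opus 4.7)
\textbf{Proof plan for Lemma \ref{derider}.} The strategy is to differentiate the power series for $B_n$ term by term and reduce both estimates to bounds on oscillatory sums of the form $\sum_{j=1}^n j^{\beta'} e^{ij\theta}$ with $\beta' > 0$. These sums will be handled by the same Abel-summation/integral-approximation technique used in the proof of Lemma~\ref{poly2}, except that now only the upper bounds are required (which makes the analysis easier).

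Since $B_n(z) = 1 + \sum_{j=1}^n d_j z^j$ with $d_j = j^{\beta-1}/\Gamma(\beta) + O(j^{\beta-2})$, termwise differentiation gives $j d_j = j^\beta/\Gamma(\beta) + O(j^{\beta-1})$ and $j(j-1)d_j = j^{1+\beta}/\Gamma(\beta) + O(j^\beta)$. Thus
\[
B_n'(e^{i\theta}) = \frac{e^{-i\theta}}{\Gamma(\beta)}\sum_{j=1}^n j^\beta e^{ij\theta} + O(n^\beta),
\qquad
B_n''(e^{i\theta}) = \frac{e^{-2i\theta}}{\Gamma(\beta)}\sum_{j=1}^n j^{1+\beta} e^{ij\theta} + O(n^{1+\beta}),
\]
where the error terms come from summing the $O(j^{\beta-1})$ and $O(j^\beta)$ contributions. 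Both errors are compatible with the claimed bounds, since $|\theta|^{-1}n^\beta \ge n^\beta$ and $|\theta|^{-1}n^{1+\beta}\ge n^{1+\beta}$ whenever $|\theta|<1$.

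For $|\theta| < n^{-1}$, the two estimates follow from the trivial triangle inequality:
\[
|B_n'(e^{i\theta})| \le \sum_{j=1}^n j\,d_j \lesssim \sum_{j=1}^n j^\beta \lesssim n^{1+\beta},
\qquad
|B_n''(e^{i\theta})| \lesssim \sum_{j=1}^n j^{1+\beta} \lesssim n^{2+\beta}.
\]
For $n^{-1} < |\theta| < \upsilon$, I exploit the oscillation. As in the derivation of \eqref{repa1}--\eqref{cos-a}, since $\int_j^{j+1} e^{ix\theta}\,dx = e^{ij\theta}(e^{i\theta}-1)/(i\theta)$ has modulus $\sim 1$ and $|x^{\beta'}-j^{\beta'}| \lesssim j^{\beta'-1}$ on $[j,j+1]$, Abel summation gives
\[
\sum_{j=1}^{n} j^{\beta'} e^{ij\theta} = O^*(1)\int_1^n x^{\beta'} e^{ix\theta}\,dx + O\Bigl(\sum_{j=1}^n j^{\beta'-1}\Bigr) = O^*(1)\int_1^n x^{\beta'} e^{ix\theta}\,dx + O(n^{\beta'}).
\]
The change of variable $t = x|\theta|$ followed by a single integration by parts ($u=t^{\beta'}$, $dv=e^{\pm it}dt$) yields, for $\beta' > 0$,
\[
\Bigl|\int_1^n x^{\beta'} e^{ix\theta}\,dx\Bigr| = |\theta|^{-1-\beta'}\Bigl|\int_{|\theta|}^{n|\theta|} t^{\beta'} e^{\pm it}\,dt\Bigr| \le |\theta|^{-1-\beta'}\Bigl(2(n|\theta|)^{\beta'} + \beta'\int_{|\theta|}^{n|\theta|} t^{\beta'-1}\,dt\Bigr) \lesssim \frac{n^{\beta'}}{|\theta|}.
\]
Specializing to $\beta' = \beta$ and $\beta' = 1+\beta$ gives the two desired bounds, with the residual $O(n^{\beta'})$ absorbed into $n^{\beta'}/|\theta|$ since $|\theta|<\upsilon<1$.

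There is no deep obstacle: the argument is a lightweight variant of the one in Lemma~\ref{poly2}, with only one-sided estimates needed. The only subtlety is keeping track of the Abel-summation residue $O(n^{\beta'})$ and verifying that it is dominated by the main term $n^{\beta'}/|\theta|$ in the regime $|\theta|>n^{-1}$, which is immediate from $|\theta|<1$. Integration by parts is unproblematic because $\beta \in (0,1)$ forces $\beta' \in (0,2)$, so the antiderivative $t^{\beta'-1}$ remains integrable near the lower endpoint.
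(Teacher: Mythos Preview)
Your argument is correct. For $|\theta|<n^{-1}$ you do exactly what the paper does (triangle inequality on the coefficients). For $|\theta|>n^{-1}$ your route differs from the paper's: you first pass from the sum $\sum_{j\le n} j^{\beta'}e^{ij\theta}$ to the integral $\int_1^n x^{\beta'}e^{ix\theta}\,dx$ via the $|x^{\beta'}-j^{\beta'}|\lesssim j^{\beta'-1}$ estimate (this is integral comparison, not Abel summation as you label it), and then integrate by parts after the substitution $t=x|\theta|$. The paper instead stays purely discrete and applies Abel's lemma directly: writing $S_j=\sum_{k\le j}e^{ik\theta}$ with $|S_j|\lesssim|\theta|^{-1}$, one gets
\[
\Bigl|\sum_{j=1}^n j^{\beta}e^{ij\theta}\Bigr|\lesssim n^{\beta}|S_n|+\sum_{j=1}^{n-1}|S_j|\,j^{\beta-1}
\lesssim |\theta|^{-1}n^{\beta}+|\theta|^{-1}\sum_{j\le n}j^{\beta-1}\lesssim |\theta|^{-1}n^{\beta},
\]
and similarly for $\beta'=1+\beta$. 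This is shorter and avoids the integral detour altogether; your approach, on the other hand, reuses the machinery of Lemma~\ref{poly2} more literally. Both reach the same bound for the same reason (discrete Abel summation and integration by parts are the same device), so the difference is one of packaging rather than substance. A minor point: your $e^{-i\theta}$, $e^{-2i\theta}$ prefactors and the use of $j(j-1)d_j$ indicate you are taking $z$-derivatives rather than $\theta$-derivatives as the lemma specifies, but since $|d/d\theta|=|d/dz|$ on $\mathbb T$ and the discrepancy $j^2d_j-j(j-1)d_j=jd_j$ is absorbed into your $O(n^{1+\beta})$ error, this does not affect the estimates.
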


\begin{proof}
For $|\theta|<n^{-1}$, this follows from
\[
B_n'=\sum^n_{j=1}ijd_je^{ij\theta},~B_n''=\sum^n_{j=1}(ij)^2d_je^{ij\theta}
\]
by estimating the absolute values of each term.

For $|\theta|\!>n^{-1}$, we can use Abel's Lemma. Indeed,
\[
|B'_n|=\left|\sum^n_{j=1}jd_je^{ij\theta}\right|\lesssim
\left|\sum^n_{j=1}e^{ij\theta}j^{\beta}\right|+\sum^n_{j=1}j^{\beta-1}\,\,.
\]
The second term in the sum is bounded by $Cn^{\beta}$. For the first one, we have
\[
\left|\sum^n_{j=1}e^{ij\theta}j^{\beta}\right|\lesssim
n^\beta|S_n|+\left|\sum^n_{j=1}S_j j^{\beta-1}\right|,\quad
S_j=\sum^n_{j=1}e^{ij\theta},\quad|S_j|\lesssim|\theta|^{-1}
\]
and that yields the bound for $B_n'$. The second derivative can be estimated similarly.
\end{proof}

Next, we will study the polynomial $A_n$. For the Taylor expansion of $(1\!-\!z)^\beta$, we have
\[
(1-z)^\beta=1+\sum^\infty_{j=1}\frac{(-1)^j\beta(\beta\!-\!1)\ldots(\beta\!-\!(j\!-\!1))}{j!}z^j=
1-\sum^\infty_{j=1}c_jz^j\,\,.
\]
The coefficients  behave as follows
\begin{equation}\label{c_j}
c_j=\frac{\beta(1-\beta)\ldots(j-1-\beta)}{j!}=\frac{-j^{-\beta-1}}{\Gamma(-\beta)}+O(j^{-\beta-2})>0\,\,.
\end{equation}
The series $\sum_j c_j$ converges absolutely and
\[
1-\sum_{j=1}^n c_jz^j\to (1-z)^\beta, \quad |z|<1
\]
therefore \[1-\sum_{j=1}^\infty c_jz^j=(1-z)^\beta\] on $\mathbb{T}$
pointwise. In particular,
$$\sum^\infty_{j=1}c_j=1\,\,.$$
Therefore, the formula for $A_n$ can be rewritten as
\[
A_n(z)=M_n+\sum^n_{j=1}c_j(1-z^j)~,\quad
M_n=\!\!\sum^\infty_{j=n+1}\!\!c_j=\frac{n^{-\beta}}{\Gamma(1\!-\!\beta)}+O(n^{-\beta-1})\,\,.
\]
We again notice that $A_n(z)$ converges to $(1\,{-}\,z)^\beta$
uniformly in $\{|z|\!\le\!1\}$. Indeed $|A_n(z)|\,{<}\,2$ uniformly
in $\mathbb D$ and
$\big|(1\!-\!z)^\beta-A_n(z)\big|\!=\!\Big|\!\sum\limits^\infty_{j=n+1}\!\!c_jz^j\Big|\!<\!M_n\,$.

\begin{lemma}\label{poly1}
Let $\beta\!\in\!(0,1)$. We have
\begin{equation}\label{odin}
\Re A_n(e^{i\theta})\sim(n^{-1}+|\theta|)^{\beta},\quad\theta\!\in\!(-\upsilon,\upsilon)
\end{equation}
and
\begin{equation}\label{dva}
-\frac{\Im A_n(e^{i\theta})}{\operatorname{sign}(\theta)}\sim\left\{\begin{array}{cc}
|\theta|n^{1-\beta},&|\theta|\!<\!0.01n^{-1}\\|\theta|^{\beta}\,,&0.01n^{-1}\!<\!|\theta|\!<\!\upsilon
\end{array}\right.\,\,.\end{equation}
\end{lemma}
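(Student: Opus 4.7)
I start from the decomposition
\[
\Re A_n(e^{i\theta}) = M_n + \sum_{j=1}^n c_j\bigl(1-\cos(j\theta)\bigr), \qquad -\Im A_n(e^{i\theta}) = \sum_{j=1}^n c_j \sin(j\theta),
\]
combined with the asymptotics $c_j = C_\beta j^{-\beta-1} + O(j^{-\beta-2})$, where $C_\beta := \beta/\Gamma(1-\beta) > 0$, and $M_n \sim n^{-\beta}/\Gamma(1-\beta)$. The argument then splits into the two regimes $|\theta|<0.01\,n^{-1}$ and $0.01\,n^{-1}<|\theta|<\upsilon$, in parallel with the proof of Lemma \ref{poly2}.

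In the small regime, $|j\theta|<0.01$ uniformly for $j\le n$, so $1-\cos(j\theta)\sim(j\theta)^2$ and $\sin(j\theta)\sim j\theta$; combined with the elementary estimate $\sum_{j=1}^n c_j j^k \sim n^{k-\beta}$ for $k=1,2$, this gives $\sum c_j(1-\cos(j\theta))\lesssim \theta^2 n^{2-\beta}\ll M_n$, hence $\Re A_n\sim M_n\sim n^{-\beta}\sim(n^{-1}+|\theta|)^\beta$, and $-\Im A_n\sim \operatorname{sign}(\theta)|\theta|\,n^{1-\beta}$. In the moderate regime I split each sum at $j_0:=\lfloor\pi/(2|\theta|)\rfloor$: for $j\le j_0$ the inequalities $1-\cos(j\theta)\ge(2/\pi^2)(j\theta)^2$ and $\operatorname{sign}(\theta)\sin(j\theta)\ge(2/\pi)|j\theta|$, together with $\sum_{j\le j_0} c_j j^k \sim j_0^{k-\beta}$, produce main contributions of size $\sim|\theta|^\beta$ and $\sim\operatorname{sign}(\theta)|\theta|^\beta$ respectively. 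The tail $j>j_0$ is controlled by Abel summation against the bound $\big|\sum_{j=j_0+1}^N e^{ij\theta}\big|\lesssim 1/|\theta|$, yielding an absolute error $\lesssim c_{j_0+1}/|\theta|\sim|\theta|^\beta$. For the real part this already suffices, since every summand in $M_n+\sum c_j(1-\cos(j\theta))$ is nonnegative.

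The main obstacle is the imaginary part when $|\theta|\sim 1/n$, where the tail bound and the main-term lower bound are both of order $|\theta|^\beta$, so a priori cancellation could destroy the lower bound. I would resolve this in two steps. First, using the identity
\[
A_n(e^{i\theta}) = (1-e^{i\theta})^\beta - \sum_{j=n+1}^\infty c_j e^{ij\theta}
\]
together with the explicit asymptotics $\Re(1-e^{i\theta})^\beta \sim |\theta|^\beta\cos(\beta\pi/2)$ and $-\operatorname{sign}(\theta)\Im(1-e^{i\theta})^\beta \sim |\theta|^\beta\sin(\beta\pi/2)$ (both nonzero because $\beta\in(0,1)$) and the Abel estimate $\big|\sum_{j>n} c_j e^{ij\theta}\big|\lesssim n^{-\beta-1}/|\theta|$, one covers the case $|\theta|\ge K/n$ for a sufficiently large constant $K=K(\beta)$, because then the tail is $o(|\theta|^\beta)$. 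Second, for the narrow residual window $0.01/n<|\theta|<K/n$ (in which $n|\theta|$ stays in a compact subset of $(0,\infty)$), I use the Riemann-sum approximation
\[
\sum_{j=1}^n c_j \sin(j\theta) = C_\beta |\theta|^\beta \int_\theta^{n\theta} u^{-\beta-1}\sin u\,du + O(|\theta|),
\]
and observe that the integral is bounded below by a positive constant uniformly as $\theta\to 0$ with $n\theta$ in such a compact subinterval; this positivity follows in the spirit of Lemma \ref{trifle} from the fact that $u^{-\beta-1}\sin u\sim u^{-\beta}$ near $0$, so the $|\theta|^{1-\beta}$-type contribution to the integral dominates. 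Together these bounds close the argument for the imaginary part and complete the proof.
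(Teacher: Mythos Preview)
Your proof is correct and, at its core, coincides with the paper's argument: both start from the same decomposition, treat the tiny regime $|\theta|<0.01\,n^{-1}$ identically, and both ultimately rely (for the imaginary part near $|\theta|\sim n^{-1}$) on the Riemann-sum approximation
\[
\sum_{j=1}^n c_j\sin(j\theta)=C_\beta\,|\theta|^\beta\int_\theta^{n\theta}u^{-\beta-1}\sin u\,du+O(|\theta|)
\]
and on the positivity $\int_0^a u^{-\beta-1}\sin u\,du>\delta_2>0$ for $a\ge0.01$ (Lemma~\ref{trifle}). Two remarks on your write-up: to get the $O(|\theta|)$ error above one must use $|\sin(x\theta)|\le|x\theta|$ when bounding the $|x^{-\beta-1}-j^{-\beta-1}|$ contribution (a naive $|\sin|\le1$ only gives $O(1)$, which is useless here); and your verbal justification of the positivity ("the $|\theta|^{1-\beta}$-type contribution dominates") is garbled --- what is actually needed is the alternating-decay argument of Lemma~\ref{trifle}, exactly as the paper uses.

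Where you genuinely differ from the paper is in two places. For the real part in the moderate regime you exploit positivity of each summand and split at $j_0=\lfloor\pi/(2|\theta|)\rfloor$, which is more direct than the paper's route of differentiating $T_n(\theta)=\sum j^{-\beta-1}(1-\cos(j\theta))$, estimating $T_n'$ via the integral formulas \eqref{sin-a}--\eqref{cos-a}, and re-integrating. For the imaginary part when $|\theta|\ge K/n$ you use the exact identity $A_n(e^{i\theta})=(1-e^{i\theta})^\beta-\sum_{j>n}c_je^{ij\theta}$ together with an Abel bound $\lesssim n^{-\beta-1}/|\theta|$ on the tail; the paper instead pushes the Riemann-sum/integral comparison through the whole range $[0.01\,n^{-1},\upsilon]$ in one stroke. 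Your approach trades one extra case split for avoiding the machinery of \eqref{sin-a}--\eqref{cos-a}; the paper's is more uniform but leans on those preparatory formulas. Both are sound.
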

\begin{proof}
We only need to handle positive $\theta$. Again, if $0\!<\!\theta\!<\!0.01n^{-1}$, the estimate is simple.
\[
\Re A_n=M_n+\sum^n_{j=1}c_j(1-\cos(j\theta))
\]
and we have a bound
\[n^{-\beta}\sim M_n\le\Re A_n\lesssim M_n+\sum^n_{j=1}j^{-\beta-1}(j^2\theta^2)\lesssim
n^{-\beta}+n^{-\beta}\,\,.
\]
Similarly
\[
\Im A_n=-\sum^n_{j=1}c_j\sin(j\theta)
\]
and
\[
\sum^n_{j=1}c_j\sin(j\theta)\sim\theta\sum^n_{j=1}jc_j\lesssim\theta n^{1-\beta}\,\,.
\]
For $\theta$ from $[0.01n^{-1},\upsilon]$, we can again approximate by the integrals. We have
\[
\sum^n_{j=1}c_j\sin(j\theta)=\frac{-1}{\Gamma(-\beta)}\sum^n_{j=1}j^{-\beta-1}\sin(j\theta)+
O\bigg(\sum^n_{j=1}j^{-\beta-2}(j\theta)\bigg)\,\,.
\]
The last term is $O(\theta)$. Then, take
\[
\int\limits^n_1\frac{\sin(x\theta)}{x^{1+\beta}}dx=\sum^{n-1}_{j=1}\int\limits_j^{j+1}
\frac{\sin(x\theta)}{x^{1+\beta}}dx=\sum^{n-1}_{j=1}j^{-\beta-1}\!\!\int\limits_j^{j+1}\!\!\sin(x\theta)dx+
O\bigg(\sum\limits^{n-1}_{j=1}\!\Big(\frac1{j^{\beta+1}}-\frac1{(j\!+\!1)^{\beta+1}}\Big)(j\!+\!1)\theta\bigg)\,\,.
\]The second term is
$$
O\bigg(\sum\limits^{n-1}_{j=1}\!j^{-\beta-1}\theta\bigg)=O(\theta)\;.
$$
For the first sum, we have
\[
\sum^{n-1}_{j=1}j^{-\beta-1}\int_j^{j+1}{\sin(x\theta)}dx=\sum^{n-1}_{j=1}
j^{-\beta-1}\frac{\sin(\theta/2)}{\theta/2}\sin(j\theta+\theta/2)=
\]
\[
\frac{\sin(\theta/2)\cos(\theta/2)}{\theta/2}\sum^{n-1}_{j=1}j^{-\beta-1}\sin(j\theta)+
\frac{\sin^2(\theta/2)}{\theta/2}\sum^{n-1}_{j=1}j^{-\beta-1}\cos(j\theta)\,\,.
\]
The second term is $O(\theta)$ and
\[
\frac{\sin(\theta/2)\cos(\theta/2)}{\theta/2}\sim 1
\]
for $\theta\!\in\!(0,\upsilon)$. Then,
\[
\int^n_1\frac{\sin(x\theta)}{x^{1+\beta}}dx=\theta^\beta
\int_\theta^{n\theta}\frac{\sin x}{x^{1+\beta}}dx=\int_0^{n\theta}\frac{\sin x}{x^{1+\beta}}dx+O(\theta)\,\,.
\]
Notice that
\[
C>\int^a_0\frac{\sin x}{x^{1+\beta}}dx>\delta_2>0
\]
for any $a>0.01$ and so we have
\[
\sum^n_{j=1}c_j\sin(j\theta)\sim\theta^{\beta}+O(\theta)\sim\theta^\beta\,\,.
\]
This implies \eqref{dva}. For the real part,
\[
\Re A_n(e^{i\theta})=M_n+T_n(\theta)+O\bigg(\theta\sum^n_{j=1}j^{-1-\beta}\bigg),~T_n=
\sum^n_{j=1}\frac{1-\cos(j\theta)}{j^{1+\beta}}\,\,.
\]
The last term is $O(\theta)$. For $T_n$, we have
\[
T_n(0)=0,\quad T_n'(\theta)=\sum^n_{j=1}\frac{\sin(j\theta)}{j^\beta}\,\,.
\]
If $\theta\!\in\!(0,0.01n^{-1})$, then $T_n'\sim\theta n^{2-\beta}$.
For $\theta\!\in\!(0.01n^{-1},\upsilon)$, the formula \eqref{cos-a}
gives
\[
T_n'\sim\theta^{\beta-1}\,\,.
\]
Integration yields
\[
T_n(\theta)=\int^\theta_0T_n'(\xi)d\xi\sim\theta^\beta,\quad\theta\!\in\!(0.01n^{-1},\upsilon)\,\,.
\]
That finishes the proof.\end{proof} It is instructive to compare the
results of Lemmas \ref{poly2} and \ref{poly1} with
\eqref{chisto}.\smallskip

For the derivative of $A_n$ in $\theta$, we have
\[
A_n'=-i\sum^n_{j=1}jc_je^{ij\theta}\,\,.
\]

\begin{lemma}\label{der-der}
If $~\beta\!\in\!(0,1)$, then
\[
|A_n'|\lesssim\left\{\begin{array}{cc}
|\theta|^{\beta-1},&|\theta|>0.01n^{-1}\\
n^{1-\beta},&|\theta|<0.01n^{-1}
\end{array}\right.
\]
uniformly in $n$.
\end{lemma}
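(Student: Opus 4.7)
The plan is to mimic the Abel summation argument from Lemma \ref{derider}, splitting the range of summation according to the size of $\theta$. Starting from
\[
A_n'(e^{i\theta}) = -i\sum_{j=1}^n j c_j e^{ij\theta},
\]
the key input is \eqref{c_j}, which gives $jc_j = -j^{-\beta}/\Gamma(-\beta) + O(j^{-\beta-1})$. The error term contributes a sum $\sum_{j=1}^\infty O(j^{-\beta-1})$, which is absolutely convergent for any $\beta \in (0,1)$, so it is uniformly $O(1)$ and is negligible in both claimed bounds (note $1 \lesssim n^{1-\beta}$ and $1 \lesssim |\theta|^{\beta-1}$ for $|\theta|<1$). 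Hence it suffices to estimate $T_n(\theta) := \sum_{j=1}^n j^{-\beta} e^{ij\theta}$.

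For $|\theta| < 0.01 n^{-1}$, I would simply bound the sum termwise:
\[
|T_n(\theta)| \le \sum_{j=1}^n j^{-\beta} \lesssim \int_1^n x^{-\beta}\,dx \lesssim n^{1-\beta},
\]
which gives the second estimate of the lemma directly.

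For $|\theta| > 0.01 n^{-1}$, I would set $N = \lfloor 1/|\theta| \rfloor$ (so $N \le n$) and split $T_n = T_n^{\mathrm{head}} + T_n^{\mathrm{tail}}$ at $j = N$. The head is estimated trivially:
\[
|T_n^{\mathrm{head}}| \le \sum_{j=1}^N j^{-\beta} \lesssim N^{1-\beta} \sim |\theta|^{\beta-1}.
\]
For the tail, denote $S_k = \sum_{j=N+1}^k e^{ij\theta}$, so that $|S_k| \lesssim 1/|\theta|$ uniformly in $k$, and use Abel's summation by parts:
\[
T_n^{\mathrm{tail}} = n^{-\beta} S_n - \sum_{j=N+1}^{n-1} S_j\bigl((j+1)^{-\beta} - j^{-\beta}\bigr).
\]
Since $|(j+1)^{-\beta} - j^{-\beta}| \lesssim j^{-\beta-1}$, this gives
\[
|T_n^{\mathrm{tail}}| \lesssim \frac{n^{-\beta}}{|\theta|} + \frac{1}{|\theta|}\sum_{j=N+1}^{n-1} j^{-\beta-1} \lesssim \frac{n^{-\beta}}{|\theta|} + \frac{N^{-\beta}}{|\theta|} \lesssim \frac{n^{-\beta}}{|\theta|} + |\theta|^{\beta-1}.
\]
The remaining boundary term is absorbed since $|\theta| > 1/n$ implies $n^{-\beta} < |\theta|^\beta$, so $n^{-\beta}/|\theta| \lesssim |\theta|^{\beta-1}$. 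Combining head and tail yields $|T_n(\theta)| \lesssim |\theta|^{\beta-1}$.

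There is no real obstacle here beyond bookkeeping; the only subtlety is verifying that both the $n^{-\beta}/|\theta|$ boundary contribution and the lower-order $O(j^{-\beta-1})$ piece from \eqref{c_j} are dominated by the main bound in their respective regimes, which follows directly from $\beta \in (0,1)$ and the range restrictions on $\theta$. The argument is strictly parallel to the one already given for $B_n'$ in Lemma \ref{derider}, with the exponent shifted by one because $c_j \sim j^{-\beta-1}$ (rather than $d_j \sim j^{\beta-1}$).
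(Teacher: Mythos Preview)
Your argument is correct; the only slip is the parenthetical ``(so $N\le n$)'': from $|\theta|>0.01\,n^{-1}$ you only get $N<100n$. If $0.01/n<|\theta|\le 1/n$ the tail may be empty, but then the trivial head bound $\sum_{j\le n}j^{-\beta}\lesssim n^{1-\beta}\lesssim |\theta|^{\beta-1}$ already suffices, so this is harmless.

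Your route differs from the paper's. Both reduce to $\sum_{j=1}^n j^{-\beta}e^{ij\theta}$, but the paper then invokes the integral comparison formulas \eqref{sin-a}--\eqref{cos-a} with $\gamma=\beta$: rescaling gives $\int_1^n x^{-\beta}e^{ix\theta}\,dx=|\theta|^{\beta-1}\int_{|\theta|}^{n|\theta|}t^{-\beta}e^{it}\,dt$, and the last integral is uniformly $O(1)$, yielding $|\theta|^{\beta-1}$ directly. Your head/tail split at $N\sim 1/|\theta|$ followed by Abel summation is more elementary and self-contained, while the paper simply recycles machinery already built for Lemmas~\ref{poly2} and~\ref{poly1}. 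One remark on your closing sentence: the parallel with Lemma~\ref{derider} is not quite ``strict''. There the coefficients $j^\beta$ are increasing, so a single Abel summation with no split suffices; here the decreasing $j^{-\beta}$ would leave an $O(1/|\theta|)$ boundary term at $j=1$, which is \emph{not} dominated by $|\theta|^{\beta-1}$, and this is precisely why your split is needed.
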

\begin{proof}
For $|\theta|\,{<}\,0.01n^{-1}$, the estimate is obtained by taking the absolute values in the sum.
For $|\theta|\!>\!0.01n^{-1}$,
\[
|A_n'|\lesssim\bigg|\sum^n_{j=1}j^{-\beta}e^{ij\theta}\bigg|+1\,\,.
\]
The estimates \eqref{sin-a} and \eqref{cos-a} along with the trivial
bounds on the integrals involved yield the statement of the
Lemma.\end{proof} \bigskip

{\bf Remark.} Notice that, as $n$ is large enough, the estimates
obtained in Lemmas \ref{poly2} and \ref{poly1} (except for the
bounds on the imaginary parts that are violated near $z=-1$) can be
extended from the small arc $|\theta|<\upsilon$ to the whole circle
using the uniform convergence of the corresponding Taylor expansions
outside any fixed arc $|\theta|<\upsilon$.
\bigskip

\smallskip
Here we give the proof to the Theorem \ref{neva-ger} (check the
paper \cite{nt} for the related questions).

\begin{proof}{\it(of the Theorem \ref{neva-ger}).}
Since $\sigma$ belongs to the Steklov class, it  belongs to the
Szeg\H{o} class as well and thus the Schur coefficients
$\{\gamma_n\}\in \ell^2$. In particular, $\gamma_n\to 0$  and
$\rho_n\to 1$. Take $z\in\mathbb T$, divide the second equation in
\eqref{srecurs} by $\phi_n^*$, and take the absolute value to get
\[
\left|\frac{\phi^*_{n+1}}{\phi_n^*}\right|=|\rho_n|^{-1}\cdot
\left|1-\gamma_n z\frac{\phi_n}{\phi_n^*}\right|
\]
Since $|\phi_n|=|\phi_n^*|$ for $z\in \mathbb{T}$, we have
\[
\sup_{z\in\mathbb T}\left|\left|
\frac{\phi_{n+1}(z,\sigma)}{\phi_n(z,\sigma)}\right| -1\right|\to 0,
\quad n\to \infty
\]
Iterating, one has
\begin{equation}\label{blizkoo}
\sup_{z\in \mathbb{T}}\left|\left|
\frac{\phi_{n+j}(z,\sigma)}{\phi_n(z,\sigma)}\right| -1\right|\to 0,
\quad n\to \infty , \quad j\,\, {\rm is \,fixed}
\end{equation}
Now, suppose \eqref{malenkoeo} fails. Then, there is
$\{m_n\}\subseteq \mathbb{N}$ and $\{z_n\}\in \mathbb{T}$ such that
\[
|\phi_{m_n}(z_n)|>C\sqrt{m_n}
\]
So, given arbitrary large fixed $K$, \eqref{blizkoo} implies
\[
|\phi_{m_n+k}(z_n)|>0.9C\sqrt{m_n}
\]
for every $k: |k|\le K$ and $n>n(K)$.  In particular,
\[
\sum_{j=0}^{m_n} |\phi_j(z_n)|^2>(0.9)^2C^2Km_n, \quad n>n(K)
\]
This, however, contradicts \eqref{chezaro} as $K$ is arbitrarily large.
\end{proof}\vspace{0.5cm}

\part*{Appendix B.}\bigskip
In this section, we control the phases of various functions we used
in the text. Let us start with $\phi$, the phase of
$Q_m(e^{i\theta})$, for $|\theta|\!<\!\upsilon$, where $\upsilon$ is
some small, positive, and fixed number.
\begin{lemma}\label{fazka1}
For any $\theta\!\in\!(-\upsilon,\upsilon)$, we have
\[
|\phi'(\theta)|\lesssim m\,\,.
\]
\end{lemma}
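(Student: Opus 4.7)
The plan is to express $\phi'(\theta)$ as a real part of a logarithmic derivative and then control it via the Hilbert transform on the circle. Since $Q_m$ has real coefficients, $Q_m(0)>0$, and $|Q_m|^2 = S_m := \mathcal G_m + |R_{(m,\alpha/2)}|^2$ is strictly positive on $\mathbb T$ (the three shifted Fej\'er kernels forming $\mathcal G_m$ cannot all vanish simultaneously, and $T_m=|R_{(m,\alpha/2)}|^2\ge 0$), $Q_m$ is outer and $\log Q_m(z)$ is analytic in $\mathbb D$ with real Taylor coefficients. Differentiating the boundary identity $\log Q_m(e^{i\theta})=\log|Q_m(e^{i\theta})|+i\phi(\theta)$ in $\theta$ yields
\[
\phi'(\theta)=\Re\!\left(e^{i\theta}\,\frac{Q_m'(e^{i\theta})}{Q_m(e^{i\theta})}\right),
\]
and the Schwarz integral representation identifies $\phi$ as the harmonic conjugate of $\log|Q_m|$, so that $\phi'=\mathcal H[(\log|Q_m|)']$, where $\mathcal H$ is the Hilbert transform on $\mathbb T$.

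The next step is to bound $(\log|Q_m|)'(\theta)=S_m'(\theta)/(2S_m(\theta))$ pointwise by $\lesssim m$. Decomposing $S_m=\mathcal G_m+T_m$, I will verify $|\mathcal G_m'/\mathcal G_m|\lesssim m$ from the explicit form of $\mathcal F_m$ and its shifts (using the bounds $\mathcal G_m\gtrsim m/(1+m^2\theta^2)$ and $|\mathcal G_m'|\lesssim m^2/(1+m^2\theta^2)$ which follow directly from $\mathcal F_m(\theta)=\sin^2(m\theta/2)/(m\sin^2(\theta/2))$), and $|T_m'/T_m|\lesssim m$ from the pointwise estimates on $R_{(m,\alpha/2)}$ and $R_{(m,\alpha/2)}'$ provided by Lemmas~\ref{poly2} and~\ref{derider} of Appendix~A. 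Combining via the convex-combination identity
\[
\frac{a'+b'}{a+b}=\frac{a}{a+b}\cdot\frac{a'}{a}+\frac{b}{a+b}\cdot\frac{b'}{b},\qquad a,b>0,
\]
and taking absolute values yields $|(\log S_m)'(\theta)|\lesssim m$ uniformly on $\mathbb T$.

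The main obstacle is passing from this pointwise bound on $(\log|Q_m|)'$ to a matching bound on its Hilbert transform $\phi'$, since $\mathcal H$ is not $L^\infty$-bounded and a naive splitting only gives $|\phi'|\lesssim m\log m$. To recover the sharp rate I plan to exploit additional smoothness: applying the same convex-combination argument to second derivatives gives $|(\log S_m)''(\theta)|\lesssim m^2$, so $(\log|Q_m|)'$ is Lipschitz with constant $\lesssim m^2$, while a more careful book-keeping on the far-field behaviour of the Fejer kernels and of $R_{(m,\alpha/2)}$ sharpens the bound to $|(\log S_m)'(t)|\lesssim \min(m,|t|^{-1})$ for $|t|<\upsilon$. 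Writing
\[
\mathcal H[f](\theta)=\frac{1}{2\pi}\,\mathrm{PV}\!\int_{-\pi}^{\pi}(f(t)-f(\theta))\cot\!\Big(\frac{t-\theta}{2}\Big)\,dt,
\]
the Lipschitz bound controls the near-diagonal piece $|t-\theta|<1/m$ by $\lesssim m^2\cdot(1/m)=m$, while the refined pointwise decay of $(\log S_m)'$, combined with the cancellation $\mathrm{PV}\!\int\cot((t-\theta)/2)\,dt=0$ that is built into the above representation, handles the far-field piece $|t-\theta|>1/m$ without a logarithmic loss; the resulting estimate $|\phi'(\theta)|\lesssim m$ on $(-\upsilon,\upsilon)$ completes the proof.
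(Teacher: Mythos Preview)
Your overall strategy---write $\phi'=\mathcal H[(\log|Q_m|)']$, split the Hilbert integral into a near part $|t-\theta|<1/m$ handled by a Lipschitz bound $|(\log S_m)''|\lesssim m^2$, and a far part handled by decay of $(\log S_m)'$---is exactly the one the paper follows. The near-part argument is fine.

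The gap is in the far part. Your claimed refinement
\[
|(\log S_m)'(t)|\lesssim\min(m,|t|^{-1}),\qquad |t|<\upsilon,
\]
is false, and the failure comes precisely from the $T_m=|R_{(m,\alpha/2)}|^2$ contribution. By Lemmas~\ref{poly2} and~\ref{derider} one has, for $|t|>m^{-1}$,
\[
\Bigl|\frac{T_m'}{T_m}\Bigr|\le \frac{2|B_m'|}{|B_m|}\lesssim\frac{|t|^{-1}m^{\alpha/2}}{|t|^{-\alpha/2}}=m^{\alpha/2}|t|^{\alpha/2-1},
\]
and this bound is sharp: at any fixed $t\in(0,\upsilon)$ the logarithmic derivative of $T_m$ is of order $m^{\alpha/2}$, not $O(1)$. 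Since $T_m\gtrsim \mathcal G_m$ once $|t|\gg m^{-1/(2-\alpha)}$, this term dominates in $S_m'/S_m$ as well, so no ``more careful book-keeping'' can produce the $|t|^{-1}$ decay you assert. With only $|(\log S_m)'(t)|\lesssim m^{\alpha/2}|t|^{\alpha/2-1}$ available, your far-field pointwise argument does not close as written.

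The paper's remedy is to replace the pointwise bound by an $L^2$ bound. After the rescaling $\xi=mt$ one finds (this is exactly the content of \eqref{modin11})
\[
\Bigl|\frac{M_m'(\xi)}{M_m(\xi)}\Bigr|\lesssim(|\xi|+1)^{-1}+(|\xi|+1)^{\alpha/2-1},
\]
which is not $O(|\xi|^{-1})$ but \emph{is} square-integrable on $(-\pi m,\pi m)$ with bound independent of $m$. Cauchy--Schwarz then gives
\[
\Bigl|\int_{|s|>1}\frac{M_m'(s+\widehat x)}{s\,M_m(s+\widehat x)}\,ds\Bigr|\le\Bigl(\int_{|s|>1}s^{-2}\,ds\Bigr)^{1/2}\Bigl\|\tfrac{M_m'}{M_m}\Bigr\|_{2}\lesssim 1,
\]
and after undoing the rescaling this yields the far-field contribution $\lesssim m$. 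If you swap your incorrect pointwise decay for this $L^2$/Cauchy--Schwarz step, your argument becomes correct and coincides with the paper's.
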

\begin{proof}Recall that (see \eqref{mult-mult})
\begin{equation}\label{mult-mult1}
Q_m(z)=\exp\left(\frac1{2\pi}\int^\pi_{-\pi}C(z,e^{i\xi})\ln|Q_m(e^{i\xi})|d\xi\right)~,\quad z\!\in\!\mathbb D
\end{equation}
and $\phi(\theta)=\arg Q_m(e^{i\theta})$, i.e.,
\[
\phi(\theta)=\Im\left(\frac1{2\pi}\int^\pi_{-\pi}C(e^{i\theta},e^{i\xi})\ln|Q_m(e^{i\xi})|d\xi\right)
\]
where, as before,
\[
C(e^{i\theta},e^{i\xi})=\frac{e^{i\xi}+e^{i\theta}}{e^{i\xi}-e^{i\theta}}
\]
and the integral is taken in principal value. Thus,
\[
\phi(\theta)=-\frac1{2\pi}\int^\pi_{-\pi}\frac{\cos((\xi-\theta)/2)}{\sin((\xi-\theta)/2)}\ln|Q_m(e^{i\xi})|d\xi
\]
This amounts to controlling the Hilbert transform of $\ln|Q_m(e^{i\xi})|$ since
\[
\frac{\cos(\xi/2)}{\sin(\xi/2)}=\frac2\xi+O(\xi),\quad\left(\frac{\cos(\xi/2)}{\sin(\xi/2)}\right)'=-\frac2{\xi^2}+O(1)\,\,.
\]
From the periodicity,
\[
\phi(\theta)=-\frac1{2\pi}\int^{\theta+\pi}_{\theta-\pi}\frac{\cos((\xi-\theta)/2)}{\sin((\xi-\theta)/2)}\ln|Q_m(e^{i\xi})|d\xi\,\,.
\]
Changing the variables, we have
$$
\phi(\theta)=-\frac1{2\pi}\int^\pi_{-\pi}\frac{\cos(\xi/2)}{\sin(\xi/2)}\ln|Q_m(e^{i(\xi+\theta)})|d\xi
=-\frac1{2\pi}\int^\pi_{-\pi}\frac{\cos(\xi/2)}{\sin(\xi/2)}\cdot\frac12\big(D_m(\xi\!+\!\theta)+\ln
m\big)d\xi
$$
where
\[
D_m(\xi)=\ln\left( \cal{G}_m(\xi) +|R_{(m,\alpha/2)}(e^{i\xi})|^2\right)+\ln
m^{-1}\,\,.
\]
Then, $$\phi'(x)
=-\frac1{2\pi}\int^\pi_{-\pi}\frac{\cos(\xi/2)}{\sin(\xi/2)}\frac12D_m'(\xi\!+\!x)d\xi\,\,.
$$
We then use the Taylor expansion for
\[
\frac{\cos(\xi/2)}{\sin(\xi/2)}
\]
and integrate by parts using the periodicity to approximate the
integral  by the Hilbert transform
$$
\Big|\phi'(x)\!+\!\frac1{2\pi}\!\int^\pi_{-\pi}\!\!\frac{D_m'(\xi\!+\!x)}\xi
d\xi\Big|= \left|\int_{-\pi}^\pi
\left(\frac{\cos(\xi/2)}{\sin(\xi/2)}-\frac{2}{\xi}\right)'\ln\left(
\cal{G}_m(\xi) +|R_{(m,\alpha/2)}(e^{i\xi})|^2\right)d\xi\right|
$$
$$
\lesssim \int_{-\pi}^\pi |\ln\left( \cal{G}_m(\xi)
+|R_{(m,\alpha/2)}(e^{i\xi})|^2\right)|d\xi\lesssim 1\,\,.
$$
The last inequality follows from
\[
\ln x<x,\quad x>1\quad\implies\qquad|\ln x|<2x+\ln\frac1x
\]
\[
\int_{\mathbb T}\Bigl(\cal{G}_m(\xi)
+|R_{(m,\alpha/2)}(e^{i\xi})|^2\Bigr)d\xi\lesssim 1
\]
and
\[
\cal{G}_m(\xi) +|R_{(m,\alpha/2)}(e^{i\xi})|^2\gtrsim 1, \quad m>m_0
\]
(see Lemma \ref{poly2} for the estimates on $R_{(m,\alpha/2)}$).
 Therefore, if $x\in (-\upsilon,\upsilon)$, then
\[
|\phi'(x)|\lesssim\left|\int^\pi_{-\pi}\frac{D_m'(\xi+x)}{\xi}d\xi\right|+1
\]
and
\[
|\phi'(x)|\lesssim 1+m\left|\int_{-m\pi}^{m\pi}\frac{M_m'(t+\widehat
x)}{tM_m(t+\widehat x)}dt\right|,\quad M_m(t)=\exp
D_m(t/m),\quad\widehat x=mx
\]
$$M_m(t)=\frac1m\Big|Q_m\Big(\frac tm\Big)\Big|^2$$
For $M_m$,
\begin{eqnarray}\label{em-en}
M_m(t)=\frac{\sin^2 (t/2)}{m^2\sin^2(t/(2m))} +\frac{\cos^2
(t/2)}{2m^2\sin^2((t-\pi)/(2m))}+
\\
\frac{\cos^2(t/2)}{2m^2\sin^2((t+\pi)/(2m))}+
m^{-1}|R_{(m,\alpha/2)}(e^{it/m})|^2 \nonumber
\end{eqnarray}
due to \eqref{feya} and \eqref{sdvig}. Thus, we only need to show
that
\[
I_1(\widehat x)=\left|\int_{-1}^1 \frac{M_m'(t+\widehat{x})}{tM_m(t+\widehat x)}dt\right|\lesssim 1
\]
and
\[
I_2(\widehat x)=\left|\int_{1<|t|<\pi m} \frac{M_m'(t+\widehat{x})}{tM_m(t+\widehat x)}dt\right|\lesssim 1
\]
uniformly in $\widehat x\in[-m\upsilon,m\upsilon]$.

Let $\cal J_n(\xi)$ denote the sum of the first three terms in
\eqref{em-en}. Then we can rewrite it as follows
\begin{equation}\label{uh-uh}\begin{aligned}
\cal J_n(\xi)=\sin^2\frac\xi2\left(\frac4{\xi^2}+\frac1{m^2}G\Big(\frac\xi{2m}\Big)\right)+
\frac12&\cos^2\frac\xi2\left(\frac4{(\xi\!-\!\pi)^2}+\frac1{m^2}G\Big(\frac{\xi\!-\!\pi}{2m}\Big)\right)+\\+\,
\frac12&\cos^2\frac\xi2\left(\frac4{(\xi\!+\!\pi)^2}+\frac1{m^2}G\Big(\frac{\xi\!+\!\pi}{2m}\Big)\right)
\end{aligned}\end{equation}
where
\[
G(x)=\frac1{\sin^2 x}-\frac1{x^2}
\]
is positive infinitely smooth function defined on $(-\pi,\pi)$ and
$G(x)\sim 1$ on $[-a,a]\subset(-\pi,\pi)$.\smallskip

Let us start with $I_2$ and take $t: |t|<\pi m$. Therefore, for
$\xi=t+\widehat x$, we have $|\xi|<(\pi+\upsilon)m$.
\smallskip

\noindent We will write a lower bound for $\cal J_n(\xi)$ for large
and for small $\xi$.

\noindent For large $\xi$, i.e., $|\xi|\!\ge\!c_1\!>\!\pi$,
$\frac{|\xi|}m\!\le\!c_2\!<\!2\pi$, we have:
$$
\cal
J_n(\xi)=\frac4{\xi^2}\sin^2\frac\xi2+\Big(\frac4{\xi^2}+O(|\xi|^{-3})\!\Big)\cos^2\frac\xi2+
O^*\Big(\frac{1}{m^2}\Big)=\frac4{\xi^2}+O(|\xi|^{-3})+O^*\Big(\frac1{m^2}\Big)\,\,.
$$

\noindent For $\xi\!\in\![-a,a]$ with fixed $a$, we get
\begin{align*}\cal J_n(\xi)&=
\frac4{\xi^2}\sin^2\frac\xi2+\left(\frac2{(\xi\!-\!\pi)^2}+\frac2{(\xi\!+\!\pi)^2}\right)
\cos^2\frac\xi2+O\Big(\frac1{m^2}\Big)\ge\\&\ge\min\!\left(\frac4{\xi^2},
\frac2{(\xi\!-\!\pi)^2}+\frac2{(\xi\!+\!\pi)^2}\right)\Big(\sin^2\frac\xi2+\cos^2\frac\xi2\Big)
+O\Big(\frac1{m^2}\Big)\gtrsim1
\end{align*}

Then, for $|\xi|<(\pi+\upsilon)m$, we have
\begin{equation}\label{shaliy}
M_m(\xi)\gtrsim\min(1,|\xi|^{-2})+m^{-1}|R_{(m,\alpha/2)}(e^{i\xi/m})|^2\,\,.
\end{equation}
For the derivative of $M_m(\xi)$, the representation \eqref{uh-uh} gives an upper bound
$$
\cal
J_n'(\xi)=-\frac8{\xi^3}\sin^2\frac\xi2-\left(\frac4{(\xi\!-\!\pi)^3}+
\frac4{(\xi\!+\!\pi)^3}\right)
\cos^2\frac\xi2+O\Big(\frac{\|G'\|_\infty}{m^3}\Big)+$$$$+\frac12\sin\xi\Big[
\frac4{\xi^2}-\frac2{(\xi\!-\!\pi)^2}-
\frac2{(\xi\!+\!\pi)^2}+\frac1{m^2}G\Big(\frac\xi{2m}\Big)
-\frac1{2m^2}G\Big(\frac{\xi\!-\!\pi}{2m}\Big)-
\frac1{2m^2}G\Big(\frac{\xi\!+\!\pi}{2m}\Big)\Big]\,\,.
$$
For large $|\xi|$, we can write
$$
\cal J_n'(\xi)=-\frac8{\xi^3}+O(|\xi|^{-5})+
O\Big(\frac1{m^3}\Big)+O(|\xi|^{-4})+O\Big(\frac{1}{m^4}\|G''\|_\infty\Big)\,\,.
$$
For $\xi\in [-a,a]$ with fixed $a$, we again use the smoothness of
$\cal J_n$.
\begin{gather*}
|\cal J_n'(\xi)|<\Big\|-\frac8{\xi^3}\sin^2\frac\xi2-
\left(\frac4{(\xi\!-\!\pi)^3}+\frac4{(\xi\!+\!\pi)^3}\right)
\cos^2\frac\xi2+\frac12\sin\xi\Big(\frac4{\xi^2}-
\frac2{(\xi\!-\!\pi)^2}-\frac2{(\xi\!+\!\pi)^2}\Big)\Big\|_\infty
+\\+O\Big(\frac{\|G'\|_\infty}{m^3}\Big)+O\Big(\frac{\pi^2}{8m^4}\|G''\|_\infty\Big)\sim1\,\,.
\end{gather*}
Combining these results, we obtain
\begin{equation}\label{last-term}
|M_m'(\xi)|\lesssim
\frac{1}{(1+|\xi|)^3}+\Bigl|(m^{-1}|R_{(m,\alpha/2)}(e^{i\xi/m})|^2)'\Bigr|,
\quad |\xi|\lesssim m\,\,.
\end{equation}
\smallskip
First, consider  $\xi: 1<|\xi|<(\pi+\upsilon) m$. The Lemma
\ref{poly2} and \eqref{shaliy} give
\[
M_m(\xi)\gtrsim \frac{1}{\xi^2}+m^{-1}\left|\frac{\xi}{m}\right|^{-\alpha}\,\,.
\]
Now, it is sufficient to use Lemmas \ref{poly2} and  \ref{derider}
to bound the last term in \eqref{last-term} as
\[
\Bigl|(m^{-1}|R_{(m,\alpha/2)}(e^{i\xi/m})|^2)'\Bigr|\le\frac2{m^2}|B'_m||B_m|\Big|_{\theta=\frac\xi
m}\lesssim\frac1{m^2}\left|\frac\xi m\right|^{-\alpha/2}\frac{m^{1+\alpha/2}}{|\xi|}\,\,.
\]
Combining these bounds, we have
\begin{equation}\label{modin}
\left|\frac{M_m'(\xi)}{M_m(\xi)}\right|\lesssim\frac{\displaystyle
\frac1{|\xi|^3}+\frac{m^{\alpha-1}}{|\xi|^{1+\alpha/2}}}{\displaystyle
\frac1{\xi^2}+\frac{m^{\alpha-1}}{|\xi|^\alpha}}\le\frac{\displaystyle
\frac1{|\xi|^3}}{\dfrac1{\xi^2}}+
\frac{\dfrac{m^{\alpha-1}}{|\xi|^{1+\alpha/2}}}{\dfrac{m^{\alpha-1}}{|\xi|^\alpha}}
\lesssim|\xi|^{-1}+|\xi|^{\alpha/2-1}\,\,
\end{equation}
for $1<|\xi|<(\pi+\upsilon)m$.
\smallskip

For $\xi: |\xi|<1$, the analogous estimates give
\begin{equation}\label{modin1}
\left|\frac{M_m'(\xi)}{M_m(\xi)}\right|\lesssim
\frac{1+m^{\alpha-1}}{1+m^{\alpha-1}}\lesssim 1\,\,.
\end{equation}
Combining \eqref{modin} and \eqref{modin1}, we get
\begin{equation}\label{modin11}
\left|\frac{M_m'(\xi)}{M_m(\xi)}\right|\lesssim
(|\xi|+1)^{-1}+(|\xi|+1)^{\alpha/2-1}
\end{equation}
which holds uniformly in $\xi:|\xi|<(\pi+\upsilon)m$. Now, the
Cauchy-Schwarz inequality implies the bound for $I_2$
\[
|I_2(\widehat x)|\le\left(\int_{1<|t|<\pi m}\frac{dt}{t^2}\right)^{1/2}
\left(\int_{|\xi|<(\pi+\upsilon)m}\left|\frac{M'(\xi)}{M(\xi)}\right|^2d\xi\right)^{1/2}\lesssim 1\,\,.
\]
\smallskip

Consider $I_1$. Apply the Mean Value Formula to rewrite it as
\[
|I_1(\widehat x)|=\left|\int_{-1}^1\frac1t\left(\frac{M_n'(\widehat x)}{M_n(\widehat x)}
+t\left(\frac{M_n'(\xi)}{M_n(\xi)}\right)'_{\xi=\xi_{\widehat x,t}}\right)dt\right|\lesssim
\left\|\frac{M_m''}{M_m}\right\|_\infty+\left\|\frac{M_m'}{M_m}\right\|^2_\infty\,\,.
\]
The second term was estimated in \eqref{modin1} so we only need to
control the first one. We use \eqref{uh-uh} and \eqref{shaliy} to
get
\[
\left|\frac{M_m''(\xi)}{M_m(\xi)}\right|\lesssim\frac{\displaystyle
\frac1{(|\xi|+1)^2}+\left|\left(m^{-1}|R_{(m,\alpha/2)}(e^{i\xi/m})|^2\right)''\right|}{\displaystyle
\frac1{1+\xi^2}+m^{-1}\left|R_{(m,\alpha/2)}(e^{i\xi/m})\right|^2}\,\,.
\]
The estimates from the Lemmas \ref{poly2} and \ref{derider}  in
Appendix A can now be used as follows. We have
\[
\left|\frac{M_m''(\xi)}{M_m(\xi)}\right|\lesssim 1+2\left|
\frac{(R_{(m,\alpha/2)}(e^{i\xi/m}))'}{R_{(m,\alpha/2)}(e^{i\xi/m})}\right|^2+2\left|
\frac{(R_{(m,\alpha/2)}(e^{i\xi/m}))''}{R_{(m,\alpha/2)}(e^{i\xi/m})}\right|
\]
since $(R\overline R)''=R''\overline R+2R'\overline R'+R\overline
R''$. For $\xi: 1<|\xi|<\upsilon m+1$, one gets
\[
\left|
\frac{(R_{(m,\alpha/2)}(e^{i\xi/m}))'}{R_{(m,\alpha/2)}(e^{i\xi/m})}
\right|\lesssim |\xi|^{\alpha/2-1}, \quad
\left|
\frac{(R_{(m,\alpha/2)}(e^{i\xi/m}))''}{R_{(m,\alpha/2)}(e^{i\xi/m})}
\right|\lesssim |\xi|^{\alpha/2-1}\,\,.
\]
For $\xi: |\xi|<1$, we have
\[
\left|
\frac{(R_{(m,\alpha/2)}(e^{i\xi/m}))'}{R_{(m,\alpha/2)}(e^{i\xi/m})}
\right|\lesssim 1,\quad
\left|
\frac{(R_{(m,\alpha/2)}(e^{i\xi/m}))''}{R_{(m,\alpha/2)}(e^{i\xi/m})}
\right|\lesssim 1\,\,.
\]
This gives a bound
\[
 \left\| \frac{M_m''}{M_m}\right\|_\infty\lesssim 1
\]
which ensures
\[
|I_1(\widehat x)|\lesssim 1
\]
uniformly in $\widehat x\in[-m\upsilon,m\upsilon]$. The proof is
finished.
\end{proof}

\bigskip


\bigskip

In the next Lemma, we will prove \eqref{1976-1} and \eqref{1976-2}.
\begin{lemma}\label{faza-5}
For the functions $H_n$ and $\widetilde F$ introduced in part 2, we
have the following bounds
\begin{equation}\label{1976-1-n}
\left|\partial_\theta\left(\frac{2+H_n(1+\~F)}{2+\overline
H_n(1+\overline{\~F})}\right)\right|<0.01n
\end{equation}
and
\begin{equation}\label{1976-2-n}
\left|\partial_\theta\left(\frac{2+\overline
H_n(1-\~F)}{2+H_n(1-\overline{\~F})}\right)\right|<0.01n
\end{equation}
provided that $\rho\ll 1$ and $n\gg 1$.
\end{lemma}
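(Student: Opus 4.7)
The plan is to observe that both ratios in \eqref{1976-1-n} and \eqref{1976-2-n} are of the unit-modular form $Q/\overline{Q}$ on $\mathbb{T}$. Indeed, since both $H_n$ and (the integrand defining) $\widetilde F$ have real Taylor coefficients, $\overline{H_n(e^{i\theta})}=H_n(e^{-i\theta})$ and $\overline{\widetilde F(e^{i\theta})}=\widetilde F(e^{-i\theta})$, so the denominators in \eqref{1976-1-n} and \eqref{1976-2-n} are literally the complex conjugates of the numerators. Consequently $|\partial_\theta(Q/\overline Q)|=2|(\arg Q)'|=2|\Im(Q'/Q)|$, so it suffices to establish $|\Im(Q'/Q)|<0.005\,n$ uniformly for $Q=N_+:=2+H_n(1+\widetilde F)$ and for $Q=D_-:=2+\overline{H_n}(1-\widetilde F)$.

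The next step is to bound $|N_+'|$ (and analogously $|D_-'|$) uniformly. Differentiation gives $N_+'=H_n'(1+\widetilde F)+H_n\widetilde F'$. The derivative $H_n'$ is uniformly bounded on $\mathbb{T}$ by combining Lemma \ref{der-der} with \eqref{_51}, so the first summand is $O(|\widetilde F|)$, which by \eqref{tit1}--\eqref{tit2} is $O(\rho n+n^{\alpha})$. For the second summand, the factor $(1-z)$ in $H_n=(1-z)(1-0.1R_{(m,-(1-\alpha))})$ compensates the singularity of $\widetilde F'$ at $z=1$: using the splitting $\widetilde F=\widetilde C_n\rho(1+\varepsilon_n-z)^{-1}+\widetilde C_n(1+\varepsilon_n-z)^{-\alpha}$ together with the elementary bound $|(1-z)/(1+\varepsilon_n-z)|\le 1$, one shows $|H_n\widetilde F'|\lesssim \rho n+n^{\alpha}$ uniformly on $\mathbb{T}$. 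Altogether $|N_+'|\lesssim \rho n+n^{\alpha}$, a bound that is $\le 0.005\,n/C$ for any fixed $C$ once $\rho\ll 1$ and $n\gg 1$.

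The third step is to prove the denominator bound $|Q|\gtrsim 1$ on $\mathbb{T}$. For $Q=D_-$ this is exactly \eqref{mesto-odin}, already established in the text. For $Q=N_+$ the same strategy applies: on $|\theta|<\varepsilon_n$ one has $|H_n(1+\widetilde F)|\lesssim |\theta|\cdot n\lesssim 1$, so $|N_+|\ge 2-O(\rho)$; on $|\theta|\ge \varepsilon_n$ the splitting of $\widetilde F$ combined with $|(1-z)/(1+\varepsilon_n-z)^\alpha|\lesssim |1-z|^{1-\alpha}$ yields $|H_n\widetilde F|\lesssim \widetilde C_n(\rho+|1-z|^{1-\alpha})$, which one arranges to stay strictly below $2$ for $\rho$ small. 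Combining the numerator and denominator bounds gives $|\Im(Q'/Q)|\le |Q'|/|Q|\lesssim \rho n+n^{\alpha}<0.005\,n$, which is \eqref{1976-1-n} and \eqref{1976-2-n}.

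The main obstacle will be ensuring the uniform lower bound $|N_+|\gtrsim 1$ away from $\theta=0$, where the crude triangle inequality shows only $|H_n(1+\widetilde F)|\le 2\widetilde C_n$, not sharply less than $2$. To close this gap, I would examine $\Re H_n$ and $\Re(H_n\widetilde F)$ separately, exploiting Lemma \ref{poly1} and the fact that $\widetilde F$ is a Caratheodory function (so $\Re\widetilde F\ge 0$) to preclude $H_n(1+\widetilde F)$ from approaching $-2$. This is a somewhat delicate case analysis, but should be routine given the explicit formulas for $H_n$ and $\widetilde F$; the small value of $\rho$ is used precisely at this point to provide the required slack.
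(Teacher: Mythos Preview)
Your approach is correct and coincides with the paper's. The paper likewise reduces to $|Q'|/|Q|$, obtains $|N_+'|\lesssim \rho n+n^\alpha$ as you outline, and for the lower bound on $|N_+|$ executes exactly the real-part argument you anticipate in your final paragraph: since $\Re H_n\ge 0$ (from \eqref{parts}), $\max_{\mathbb T}\Re\bigl(\widetilde C_n\rho\, H_n(1+\varepsilon_n-z)^{-1}\bigr)\to 0$ as $\rho\to 0$, and the explicit numerical bound $|\widetilde C_n H_n(1+\varepsilon_n-z)^{-\alpha}|<2^{1-\alpha}(1+2^{1-\alpha}/10)<1.8$ (valid because $\alpha>1/2$), one gets $\Re N_+>0.2$ uniformly on $\mathbb T$.
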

\begin{proof}
 We will only prove
\eqref{1976-1-n} as the other bound is similar. We have
\[
2+H_n(1+\~ F)=2+(1-z)(1-0.1R_{(m,-(1-\alpha))}(z))(1+\~
C_n\rho(1+\e_n-z)^{-1}+\~ C_n(1+\e_n-z)^{-\alpha})\,\,.
\]
First, notice that $\~ C_n\to 1$ as $\rho\to 0$ and $\e_n\to 0$ as
follows from \eqref{small-w}. The formula \eqref{eto-ash} for $H_n$
implies that
\[
|H_n(z)|<C(\alpha)|1-z|, \quad z\in \mathbb{T}
\]
Then,
\[
\max_{z\in \mathbb T}\Re \Bigl(\~ C_n H_n
\rho(1+\e_n-z)^{-1}\Bigr)\to 0, \quad \rho\to 0\,\,.
\]
Finally,
\[
|H_n\~ C_n (1+\e_n-z)^{-\alpha}|<\~
C_n|1+\e_n-z|^{1-\alpha}(1+0.1|1-z|^{1-\alpha}+o(1))
\]
where $o(1)\to 0$ as $m\to \infty$ because
$R_{(m,-(1-\alpha))}(z)\to (1-z)^{1-\alpha}$ uniformly on the
circle. Therefore,
\[
|H_n\~ C_n (1+\e_n-z)^{-\alpha}|<2^{1-\alpha}\Bigl(1+\frac{
2^{1-\alpha}}{10}\Bigr)+o(1)<\sqrt 2\Bigl(1+\frac{\sqrt
2}{10}\Bigr)+o(1)<1.8
\]
since $\alpha\in (0.5,1)$. Thus,
\[
\Re (2+H_n(1+\~ F))>0.2
\]
and so
\[
|2+H_n(1+\~ F)|>0.2
\]
for all $z\in \mathbb T$. Then,
\[
\left|\partial_\theta \left(\frac{2+H_n(1+\~ F)}{2+\overline
H_n(1+\overline{\~ F})}\right)\right|\le
2\left|\frac{\partial_\theta\Bigl(H_n(1+\~ F)\Bigr)}{2+H_n(1+\~
F)}\right|<10\Bigl|\partial_\theta\Bigl(H_n(1+\~ F)\Bigr)\Bigr|\,\,.
\]
We have
\[
\partial_\theta\Bigl(H_n(1+\~
F)\Bigr)=H_n'(1+\~ F)+H_n\~ F'\,\,.
\]
The explicit expressions for $H_n$ and $\~ F$ give
\[
|H_n\~ F'|<C(\alpha)
|1-z|\left(\rho|1+\e_n-z|^{-2}+|1+\e_n-z|^{-1-\alpha}\right)<C(\alpha)\bigl(
\rho n+n^{\alpha}\bigr), \quad z\in \mathbb T
\]
and
\[
|H_n'(1+\~ F)|<C(\alpha)(1+|\~ F|)+0.1|(1-z)(1+\~ F)|\cdot
|R'_{(m,-(1-\alpha))}(z)|\,\,.
\]
The Lemma \ref{der-der} implies that the third term is bounded by
$Cn^{\alpha}$ and we have the bound
\[
|H_n'(1+\~ F)|<C(\alpha)(1+\rho n+n^\alpha)
\]
uniformly over $\mathbb T$. Making $\rho$ small and $n$ large
finishes the proof of \eqref{1976-1-n}. \end{proof} {\bf Remark.}
The estimates in the Lemma above are valid for $\rho\in (0,\rho_0)$
and $n>n_0$ where $\rho_0$ and $n_0$ both depend on $\alpha$.

\bigskip

\vspace{0.5cm}

\end{document}